\numberwithin{theorem}{section}
\numberwithin{equation}{section}
\renewcommand{\cases}[1]{\left\{ \begin{array}{rl} #1 \end{array} \right.}
\def\XXint#1#2#3{{\setbox0=\hbox{$#1{#2#3}{\int}$ }
\vcenter{\hbox{$#2#3$ }}\kern-.6\wd0}}
\newlength{\dhatheight}
\newlength{\dtildeheight}
\def\b{\big}
\def\sep{\,|\,}
\def\bsep{\,\b|\,}
\def\diam{{\rm diam}}
\def\supp{{\rm supp}}
\def\R{\mathbb{R}}
\def\Z{\mathbb{Z}}
\def\dx{\,{\rm d}x}
\def\dy{\,{\rm d}y}
\def\dr{\,{\rm d}r}
\def\dt{\,{\rm d}t}
\def\pp{\partial}
\def\<{\langle}
\def\>{\rangle}
\def\mA{{\sf A}}
\def\mF{{\sf F}}
\def\bfa{{\bm a}}
\def\bfg{{\bm g}}
\def\bfh{{\bm h}}
\def\D{\nabla}
\def\del{\delta}
\def\ddel{\delta^2}
\def\a{{\rm a}}
\def\c{{\rm c}}
\def\ac{{\rm ac}}
\def\i{{\rm i}}
\def\L{\Lambda}
\def\Is{\mathcal{I}}
\def\As{\mathcal{A}}
\def\Cs{\mathcal{C}}
\def\E{\mathcal{E}}
\def\tilu{\tilde u}
\def\tilv{\tilde v}
\def\T{\mathcal{T}}
\definecolor{cocol}{rgb}{0.7, 0, 0}
\definecolor{hwcol}{rgb}{0, 0.7, 0}
\definecolor{adcol}{rgb}{0, 0, 0.7}
\begin{document}

\title[A/C coupling with higher-order finite elements]{Analysis of patch-test
  consistent atomistic-to-continuum coupling with higher-order finite elements}

\author{A. S. Dedner}
\address{A. Dedner\\ Mathematics Institute \\ Zeeman Building \\
 University of Warwick \\ Coventry CV4 7AL \\ UK}
\email{a.s.dedner@warwick.ac.uk}

\author{C. Ortner}
\address{C. Ortner\\ Mathematics Institute \\ Zeeman Building \\
 University of Warwick \\ Coventry CV4 7AL \\ UK}
\email{c.ortner@warwick.ac.uk}

\author{H. Wu}
\address{H. Wu\\ Mathematics Institute \\ Zeeman Building \\
 University of Warwick \\ Coventry CV4 7AL \\ UK}
\email{huan.wu@warwick.ac.uk}

\date{\today}
\thanks{HW was supported by MASDOC doctoral training centre, EPSRC grant
  EP/H023364/1. CO was supported by ERC Starting Grant 335120.}
%
%

\keywords{atomistic models, coarse graining, atomistic-to-continuum coupling,
  quasicontinuum method, error analysis}

\begin{abstract}
  We formulate a patch test consistent atomistic-to-continuum coupling (a/c)
  scheme that employs a second-order (potentially higher-order) finite element
  method in the material bulk. We prove a sharp error estimate in the
  energy-norm, which demonstrates that this scheme is (quasi-)optimal amongst
  energy-based sharp-interface a/c schemes that employ the Cauchy--Born
  continuum model. Our analysis also shows that employing a higher-order
  continuum discretisation does not yield qualitative improvements to the rate
  of convergence.
\end{abstract}

\maketitle

\def\arraystretch{1.5}


\section{Introduction}
Atomistic-to-continuum (a/c) coupling is a class of coarse-graining methods for
efficient atomistic simulations of systems that couple localised atomistic
effects described by molecular mechanics with long-range elastic effects
described by continuum models using the finite-element method. We refer to
\cite{acta}, and references therein for an extensive introduction and
references.

The presented work explores the feasibility and effectiveness of introducing
higher-order finite element methods in the a/c framework, specifically for
quasi-nonlocal (QNL) type methods.

The QNL-type coupling, first introduced in \cite{Shimokawa:2004}, is an a/c
method that uses a ``geometric consistency condition'' \cite{E:2006} to
construct the coupling between the atomistic and continuum model. The first
explicit construction of such a scheme for two-dimensional domains with corners is
described in \cite{PRE-ac.2dcorners} for a neareast-neighbour many-body site
potential. We call this construction "G23" for future reference. This approach
satisfies force and energy patch tests (often simply called consistency),
which in particular imply absence of ghost forces.

We will supply the G23 scheme with finite element methods of different orders
and investigate the rates of convergence for the resulting schemes. Our
conclusion will be that second-order finite element schemes are theoretically
superior to first-order schemes, while schemes of third and higher order do not
improve the rate of convergence. This is due to the fact that the consistency
error of the a/c scheme is dominated by the modelling error committed at the a/c
interface. We will also explore, for some basic model problems, how well
second-order schemes fare in practise against first-order schemes.

\subsection{Outline}
\label{sec:intro:outline}
The theory of high-order finite element methods (FEM) in partial differential
equations, and applications in solid mechanics is well established; see
\cite{Schwab} and references therein. However, most work on the rigorous error
analysis of a/c coupling has been restricted to P1 finite element methods; the
only exception we are aware of is \cite{OrtnerZhang:SISC}, which focuses on
blending-type methods.

In the present work we estimate the accuracy of a QNL method employing a P2 FEM in
the continuum region against an exact solution obtain from a fully atomistic
model. Since stability of QNL type couplings is a subtle issue
\cite{2013-stab.ac} we will primarily analyse the consistency errors, taking
account the relative sizes of the fully resolved atomistic region and of the
entire computational domain (Sections 5.1-5.4). We will then optimize these
relative sizes as well as the mesh grading in the continuum region in order to
minimize the total consistency error (Section 5.5). We will observe that, using
P1-FEM in the continuum region, the error resulting from FEM approximations is
the dominating contributor of the consistency estimates, which implies that
increasing the order of the FEM can indeed improve the accuracy of the
simulation. We will show that, using Pk-FEM with $k\ge 2$, the FEM approximation
error is dominated by the interface error which comes purely from the G23
construction, and in particular demonstrate that the P2-FEM is sufficient to
achieve the optimal convergence rate for the consistency error. Finally, {\em
  assuming} the stability of G23 coupling (see \cite{2013-stab.ac} why this must
be an assumption), we prove a rigorous error estimate in \S \ref{sect:consist}.

Finally, we conduct numerical experiments on a 2D anti-plane model problem to
test our analytical predictions. The numerical results display the predicted
error convergence rates for the fully atomistic model, P1-FEM G23 model, and
P2-FEM G23 model.

\section{Preliminaries}
Our setup and notation follows \cite{PRE-ac.2dcorners}.  As our model geometry
we consider an infinite 2D triangular lattice,
\begin{equation*}
\L := \mA \Z^2, \quad \text{with } \mA =
\begin{pmatrix}
1 & \cos(\pi /3) \\ 0 & \sin(\pi/3)
\end{pmatrix}.
\end{equation*}
We define the six nearest-neighbour lattice directions by $a_1 :=(1,0)$, and
$a_j := Q_6^{j-1}a_1, j \in \Z$, where $Q_6$ denotes the rotation through the
angle $\pi/3$.  We supply $\L$ with an {\em atomistic triangulation}, as shown
in Figure~\ref{fig:lattice}, which will be convenient in both analysis and
numerical simulations. We denote this triangulation by $\T$ and its elements by
$T\in \T$. We also denote $\bfa := (a_j)_{j=1}^6$, and
$\mF \bfa : = (\mF a_j)_{j=1}^6$, for $\mF \in \R^{m\times 2}$.

\begin{figure}
\centering
\includegraphics[width=0.5\linewidth]{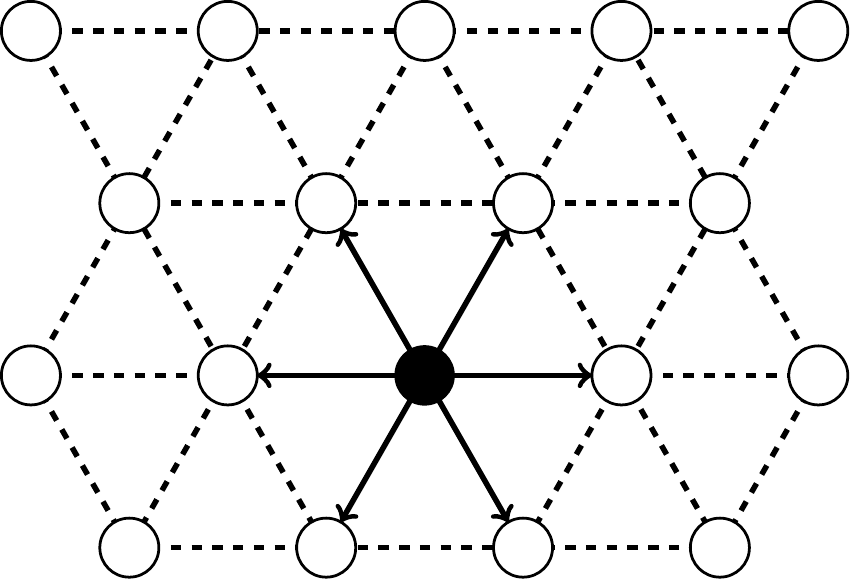}
\caption{\small{The lattice and its canonical triangulation.}}
\label{fig:lattice}
\end{figure}

We identify a discrete displacement map $u: \L \rightarrow \R^m $, $m = 1,2,3$, with its continuous piecewise affine interpolant, with
weak derivative $\D u$, which is also the pointwise derivative on each element
$T \in \mathcal{T}$. For $m = 1,2,3$, the spaces of displacements are defined as
\begin{equation*}
\begin{aligned}
\mathcal{U}_0 & := \b\{u \sep \L \rightarrow \R^m : \supp(\D u ) \text{ is compact} \b \}, \quad \text{and}\\
\dot{\mathcal{U}}^{1,2}  &:= \b\{u \sep \L \rightarrow \R^m : \D u  \in L^2\b\} .
\end{aligned}
\end{equation*}
We equip $\dot{\mathcal{U}}^{1,2}$ with the $H^1$-seminorm, $\|u\|_{\mathcal{U}^{1,2}} := \|\D u\|_{L^2(\R^2)}$. From \cite{OrShSu:2012} we know that $\mathcal{U}_0$ is dense in $\dot{\mathcal{U}}^{1,2}$ in the sense that, if $u \in \dot{\mathcal{U}}^{1,2}$, then there exist $ u_j \in \mathcal{U}_0$ such that $\D u_j \rightarrow \D u$ strongly in $L^2$.

A \emph{homogeneous displacement} is a map $u_\mF: \L \rightarrow \R^m, u_\mF(x) : = \mF x$, where $\mF\in \R^{m\times2}$.

For a map $u:\L \rightarrow \R^m$, we define the finite difference operator
\begin{equation}\label{def:diff_op}
\begin{aligned}
D_j u(x) &:= u(x+a_j)-u(x), \quad x \in \L, j \in \{1,2,...,6\}, \quad \text{and}\\
Du(x) &:= (D_j u(x))_{j=1}^6.
\end{aligned}
\end{equation}
Note that $Du_{\mF}(x) = \mF \bfa$.

\subsection{2D many-body nearest neighbour interactions}
\label{sec:energy}
We assume that the atomistic interaction is described by a nearest-neighbour
many-body site energy potential $V \in C^r(\R^{m\times 6})$,$r\ge 5$, with
$V(\mathbf{0}) = 0$. Furthermore, we assume that $V$ satisfies the \emph{point
  symmetry}
\begin{equation*}
V((-g_{j+3})_{j=1}^6) = V(\bfg) \quad \forall \bfg \in \R^{m\times 6}.
\end{equation*} The energy of a displacement $u\in \mathcal{U}_0$, given by
\begin{equation*}
\E^\a(u): = \sum_{\ell\in \Lambda}V(Du(\ell)),
\end{equation*}
is well-defined since the infinite sum becomes finite.  To formulate a variational problem in the energy space
$\dot{\mathcal{U}}^{1,2}$, we need the following lemma to extend $\E^\a$ to
$\dot{\mathcal{U}}^{1,2}$.

\begin{lemma}
  $\E^\a :(\mathcal{U}_0,\|\D \cdot\|_{L^2} )\rightarrow \R$ is continuous and
  has a unique continuous extension to $\dot{\mathcal{U}}^{1,2}$, which we still
  denote by $\E^\a $. Furthermore, the extended
  $\E^\a :(\dot{\mathcal{U}}^{1,2},\|\D \cdot\|_{L^2} ) \rightarrow \R$ is
  $r$-times continuously Fr\'{e}chet differentiable.
\end{lemma}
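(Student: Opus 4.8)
The plan is to reduce everything to uniform bounds on the discrete energy density and its derivatives, then invoke density of $\mathcal{U}_0$ to extend. First I would establish a Lipschitz-type bound: for $u, v \in \mathcal{U}_0$, write $\E^\a(u) - \E^\a(v) = \sum_{\ell \in \Lambda}\big(V(Du(\ell)) - V(Dv(\ell))\big)$ and use the fundamental theorem of calculus together with $V \in C^r$, $V(\mathbf 0) = 0$, so that $|V(\mathbf g)| \le C(|\mathbf g| + |\mathbf g|^2 + \dots)$ locally — but since we need a global statement on $\dot{\mathcal{U}}^{1,2}$ and $Du(\ell)$ need not be small, the cleanest route is to first prove the estimate on bounded sets. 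Concretely, I expect the key inequality to be of the form $|\E^\a(u) - \E^\a(v)| \le C(\|\D u\|_{L^2} + \|\D v\|_{L^2}, \dots)\,\|\D u - \D v\|_{L^2}$, obtained by noting $\sum_\ell |Du(\ell)|^2 \simeq \|\D u\|_{L^2}^2$ (equivalence of the finite-difference stencil with the piecewise-affine gradient on the canonical triangulation) and $\sum_\ell |Du(\ell) - Dv(\ell)|^2 \simeq \|\D u - \D v\|_{L^2}^2$, then applying Cauchy--Schwarz to $\sum_\ell \int_0^1 \nabla V(Dv(\ell) + t(Du(\ell)-Dv(\ell))) \,\mathrm{d}t \cdot (Du(\ell) - Dv(\ell))$, controlling the $\nabla V$ factor by the $C^1$ norm of $V$ on a large ball. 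This gives local Lipschitz continuity of $\E^\a$ on $(\mathcal{U}_0, \|\D\cdot\|_{L^2})$.

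Next, continuity plus the density statement from \cite{OrShSu:2012} gives the unique continuous extension: given $u \in \dot{\mathcal{U}}^{1,2}$ and $u_j \in \mathcal{U}_0$ with $\D u_j \to \D u$ in $L^2$, the sequence $(u_j)$ is Cauchy in the seminorm, hence $\E^\a(u_j)$ is Cauchy in $\R$ by local Lipschitz continuity (the $\D u_j$ stay in a bounded set), so we set $\E^\a(u) := \lim_j \E^\a(u_j)$; independence of the approximating sequence and the fact that the extension is again locally Lipschitz are routine. For Fr\'echet differentiability, I would first compute, for $u \in \mathcal{U}_0$ and a test function $w \in \mathcal{U}_0$, the candidate derivative
\begin{equation*}
  \langle \delta\E^\a(u), w\rangle = \sum_{\ell\in\Lambda} \nabla V(Du(\ell)) \cdot Dw(\ell),
\end{equation*}
show via Cauchy--Schwarz and the stencil equivalence that $w \mapsto \langle\delta\E^\a(u),w\rangle$ extends to a bounded linear functional on $\dot{\mathcal{U}}^{1,2}$, with operator norm locally bounded in $\|\D u\|_{L^2}$, and that the Taylor remainder $\E^\a(u+w) - \E^\a(u) - \langle\delta\E^\a(u),w\rangle$ is $o(\|\D w\|_{L^2})$ by a second-order Taylor expansion of $V$ (using $V \in C^2$). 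Higher derivatives $\delta^k\E^\a(u)[w_1,\dots,w_k] = \sum_\ell \nabla^k V(Du(\ell))[Dw_1(\ell),\dots,Dw_k(\ell)]$ are handled the same way, with $k$-linear boundedness from a H\"older/Cauchy--Schwarz estimate and continuity in $u$ from uniform continuity of $\nabla^k V$ on bounded sets; the requirement $r \ge 5$ ensures enough derivatives for the later analysis, but the lemma itself only needs $C^r$ to get $r$-times differentiability. Each differentiability statement then transfers from $\mathcal{U}_0$ to $\dot{\mathcal{U}}^{1,2}$ by density and the local uniformity of all the bounds.

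The main obstacle, and the point deserving the most care, is the equivalence $\sum_{\ell\in\Lambda} |Du(\ell)|^2 \simeq \|\D u\|_{L^2(\R^2)}^2$ and its increment version: one must check that summing the squared nearest-neighbour differences over all lattice sites is comparable, up to constants depending only on the lattice, to the $L^2$ norm of the piecewise-affine gradient on the canonical triangulation $\T$. This is where the specific geometry (six nearest neighbours, the triangulation of Figure~\ref{fig:lattice}) enters; on each triangle $\D u$ is constant and expressible through two of the $D_j u$ values at its vertices, and conversely each $D_j u(\ell)$ is controlled by the gradients on the (finitely many) triangles touching $\ell$, so a finite-overlap counting argument closes the equivalence. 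Once this norm-equivalence lemma is in hand, the rest is a fairly mechanical combination of Taylor expansion of $V$, Cauchy--Schwarz, and the density argument; there are no genuine analytic difficulties beyond keeping track of which ball in $\R^{m\times 6}$ the arguments $Du(\ell)$ live in so that the constants from $V \in C^r$ are uniform over the relevant bounded sets.
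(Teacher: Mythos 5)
The paper itself does not give a proof (it cites Lemma~2.1 of \cite{EhrOrtSha:2013}), so the comparison is against the standard argument there, which your outline resembles: Taylor expansion of $V$, the $\ell^2$--$L^2$ stencil equivalence, and extension by density. That overall architecture is right, and the norm-equivalence point you flag as the ``main obstacle'' is indeed true and routine.

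However, there is a genuine gap at the step you treat as mechanical: the Cauchy--Schwarz estimate for
$\sum_{\ell}\int_0^1 \nabla V\bigl(Dv(\ell)+t(Du(\ell)-Dv(\ell))\bigr)\,{\rm d}t : (Du(\ell)-Dv(\ell))$
with ``the $\nabla V$ factor controlled by the $C^1$ norm of $V$ on a large ball'' does not close. The hypotheses give $V(\mathbf{0})=0$ but \emph{not} $\nabla V(\mathbf{0})=0$ (the point symmetry only yields $\partial_{j+3}V(\mathbf{0})=-\partial_j V(\mathbf{0})$), so over the infinite lattice the factor $\nabla V(\cdots)$ tends to the nonzero constant $\nabla V(\mathbf{0})$ and is not square-summable; pairing it in $\ell^2$ with $Du-Dv$ is therefore not legitimate, and bounding it instead in $\ell^\infty$ and restricting the sum to $\supp(Du-Dv)$ produces a constant proportional to the square root of the number of active sites, which is not controlled by the seminorms and ruins the density argument. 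The missing idea is the renormalisation used in the cited proof: since $\sum_\ell D_j w(\ell)=0$ for every compactly supported $w$ (telescoping), the affine part of $V$ contributes nothing, so one may replace $\nabla V(\cdots)$ by $\nabla V(\cdots)-\nabla V(\mathbf{0})$, which is pointwise bounded by $C\bigl(|Du(\ell)|+|Dv(\ell)|\bigr)$ and hence lies in $\ell^2$ with norm controlled by $\|\D u\|_{L^2}+\|\D v\|_{L^2}$; this yields exactly the local Lipschitz bound you stated, and equivalently one works with the renormalised summand $V(Du(\ell))-\nabla V(\mathbf{0}){:}Du(\ell)$. The same correction is needed for your formula for $\delta\E^\a(u)$ (replace $\nabla V(Du(\ell))$ by $\nabla V(Du(\ell))-\nabla V(\mathbf{0})$ before claiming boundedness on $\dot{\mathcal{U}}^{1,2}$), whereas for $\delta^k\E^\a$ with $k\ge 2$ your naive estimate is fine because two $\ell^2$ factors $Dw_1, Dw_2$ are already present and $|Dw_i(\ell)|\lesssim\|\D w_i\|_{L^2}$ handles the remaining factors.
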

\begin{proof}
	See Lemma 2.1 in \cite{EhrOrtSha:2013}.
\end{proof}

We model a point defect by adding an external potential
$f \in C^r(\dot{\mathcal{U}}^{1,2})$ with $\partial_{u(\ell)} f(u) = 0$ for all
$|\ell| \geq R_f$, where $R_f$ is some given radius (the defect core radius),
and $f(u + c) = f(u)$ for all constants $c$. For example, we can think of $f$
modelling a substitutional impurity. See also \cite{2014-bqce, Or:2011a} for
similar approaches.

We then seek the solution to
\begin{equation}\label{eq:y_a}
u^\a \in \arg \min \b\{ \E^\a(u) - f(u)  \sep u \in \dot{\mathcal{U}}^{1,2} \b\}.
\end{equation}

For $u, \varphi,\psi \in \dot{\mathcal{U}}^{1,2}$ we define the \emph{first and
  second variations} of $\mathcal{E}^\a$ by
\begin{equation*}
\begin{aligned}
  \langle \delta \mathcal{E}^\a(u), \varphi \rangle &:= \lim_{t\rightarrow 0}t^{-1}\left(\mathcal{E}^\a (u+t\varphi)-\mathcal{E}^\a(u)\right), \\
  \<\del^2 \E^\a (u) \varphi, \psi\> &:=\lim_{t\rightarrow 0}t^{-1}\left(\<\del\mathcal{E}^\a (u+t\varphi),\psi\>-\<\del \mathcal{E}^\a(u), \psi\>\right).
\end{aligned}
\end{equation*}
We use analogous definitions for all energy functionals introduced in later sections.

\subsection{The Cauchy--Born Approximation}
The Cauchy--Born strain energy function, corresponding to the interatomic
potential $V$ is
\begin{equation*}
W(\mathsf{F}): = \frac{1}{\Omega_0} V(\mathsf{F}\bfa), \qquad \text{for }
\mathsf{F} \in \mathbb{R}^{m \times 2},
\end{equation*}
where $\Omega_0 := \sqrt{3}/2$ is the volume of a unit cell of the lattice $\Lambda$. Thus $W(\mathsf{F})$ is the energy per volume of the homogeneous lattice $\mathsf{F}\Lambda$.

\subsection{The G23 coupling method}
\label{sec:g23model}
Let $\As \subset \L$ denote the set of all lattices sites for which we want to maintain
full atomistic accuracy. We denote the set of interface lattice sites by
\begin{equation*}
 \Is := \b\{ \ell \in \L \setminus \As \bsep \ell +a_j \in \As \text{ for some } j \in \{1,\dots,6\} \b \}
\end{equation*}
and we denote the remaining lattice sites by
$\Cs : = \L \setminus (\As \cup \Is)$. Let $\Omega_\ell$ be the Voronoi cell
associated with site $\ell$. We define the continuum region
$\Omega^\c : = \R^2 \setminus \bigcup_{\ell \in \As \cup \Is} \Omega_{\ell}$;
see Figure \ref{fig:decomp}.

\begin{figure}
  \centering
  \includegraphics[width=0.65\linewidth]{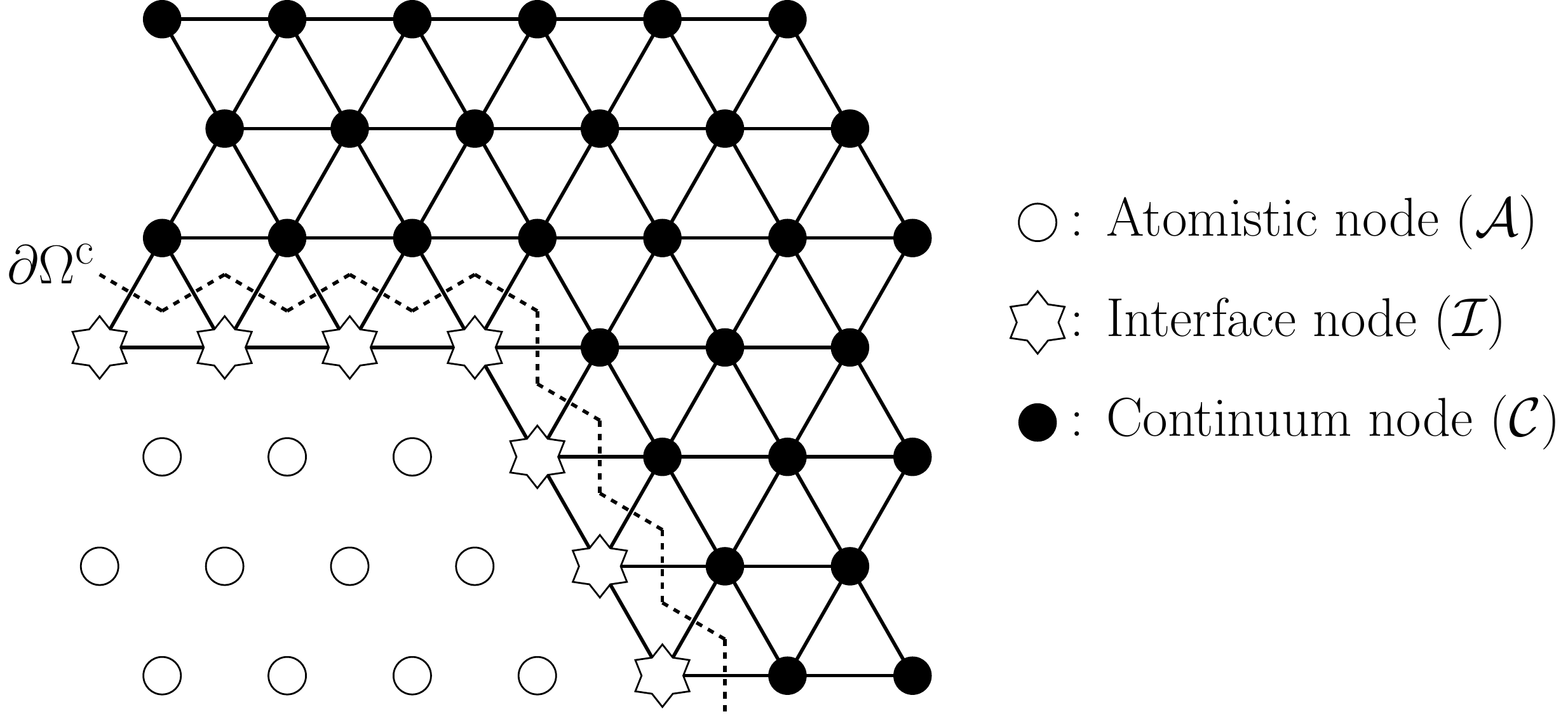}
  \caption{\small{The domain decomposition with a layer of interface atoms. }}
  \label{fig:decomp}
\end{figure}

A general form for the GRAC-type a/c coupling energy \cite{E:2006,
  PRE-ac.2dcorners} is
\begin{equation*}
\E^{\rm ac}(u) = \sum_{\ell\in \As} V(Du(\ell)) + \sum_{\ell\in \Is} V\left((\mathcal{R}_\ell D_ju(\ell))_{j=1}^6\right)+\int_{\Omega^\c} W(\D u(x)) \dx,
\end{equation*}
where $\mathcal{R}_\ell D_j u(\ell): = \sum_{i=1}^6C_{\ell,j,i}D_i u(\ell)$.
The parameters $C_{\ell,j,i}$ are to be determined in order for the coupling
scheme to satisfy the ``patch tests'':

$\mathcal{E}^{\rm ac}$ is \emph{locally energy consistent} if, for all $\mF \in \mathbb{R}^{m\times2}$,
\begin{equation}\label{econs}
V_\ell^i(\mF\bfa) = V(\mF\bfa) \quad \forall \ell \in \mathcal{I}.
\end{equation}

$\mathcal{E}^{\rm ac}$ is \emph{force consistent} if, for all $\mF \in \mathbb{R}^{m\times2}$,
\begin{equation}\label{fcons}
\del \E^\ac(u_\mF) = 0, \quad \text{where} \quad u_\mF(x) : = \mF x.
\end{equation}

$\mathcal{E}^{\rm ac}$ is \emph{patch test consistent} if it satisfies both \eqref{econs} and \eqref{fcons}.

For the sake of brevity of notation we will often write
$V^i_\ell(Du(\ell)) : = V\left((\mathcal{R}_\ell D_ju(\ell))_{j=1}^6\right)$.
Following \cite{PRE-ac.2dcorners} we make the following standing assumption (see
Figure \ref{fig:interface_corners} for examples).

{\bf (A0)} \emph{Each vertex $\ell \in \Is$ has exactly two neighbours in $\Is$, and at least one neighbour in $\Cs$.}

\begin{figure}
\centering
\includegraphics[width=0.9\linewidth]{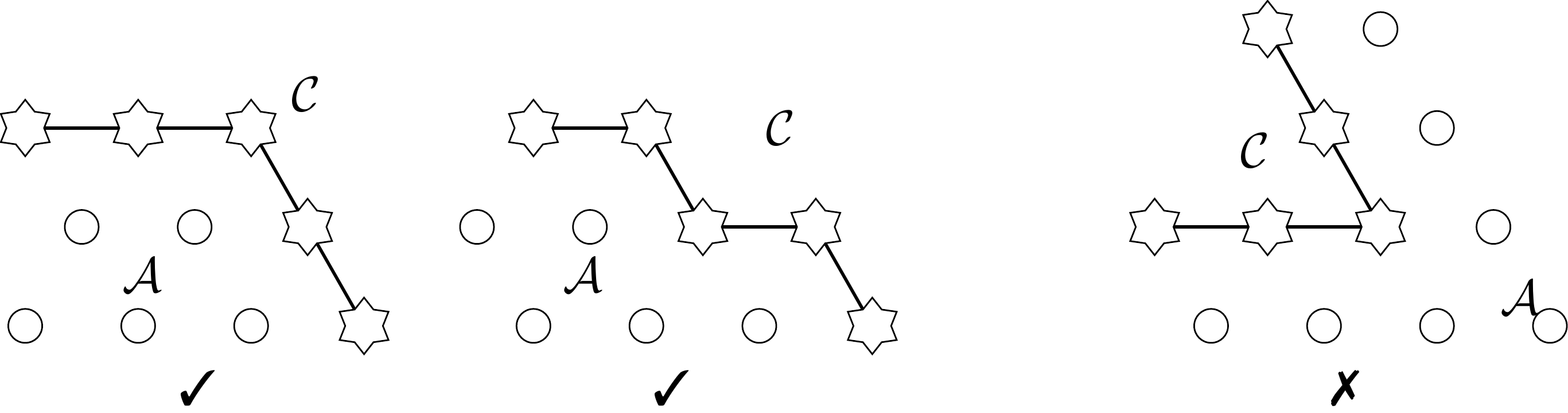}
\caption{\small The first two configurations are allowed. The third configuration is not allowed as the interface atom at the corner has no nearest neighbour in the continuum region, and should instead be taken as an atomistic site.}
\label{fig:interface_corners}
\end{figure}

Under this assumption, the geometry reconstruction operator $\mathcal{R}_\ell$ is then defined by
\begin{equation*}
\begin{aligned}
\mathcal{R}_\ell D_j y(\ell) &:= (1-\lambda_{\ell,j}) D_{j-1}y(\ell) + \lambda_{\ell,j} D_{j}y(\ell) + (1-\lambda_{\ell,j}) D_{j+1}y(\ell),\\
\lambda_{x,j} & :=
\cases{2/3, & x+a_j \in \Cs \\
	1, &\text{otherwise} };
\end{aligned}
\end{equation*}
see Figure \ref{fig:general_interface}. The resulting coupling method is called G23 and the corresponding energy functional $\E^{\rm g23}$. This choice of coefficients (and only this choice) leads to patch test consistency \eqref{econs} and \eqref{fcons}.

\begin{figure}
	\centering
	\includegraphics[width=0.55\linewidth]{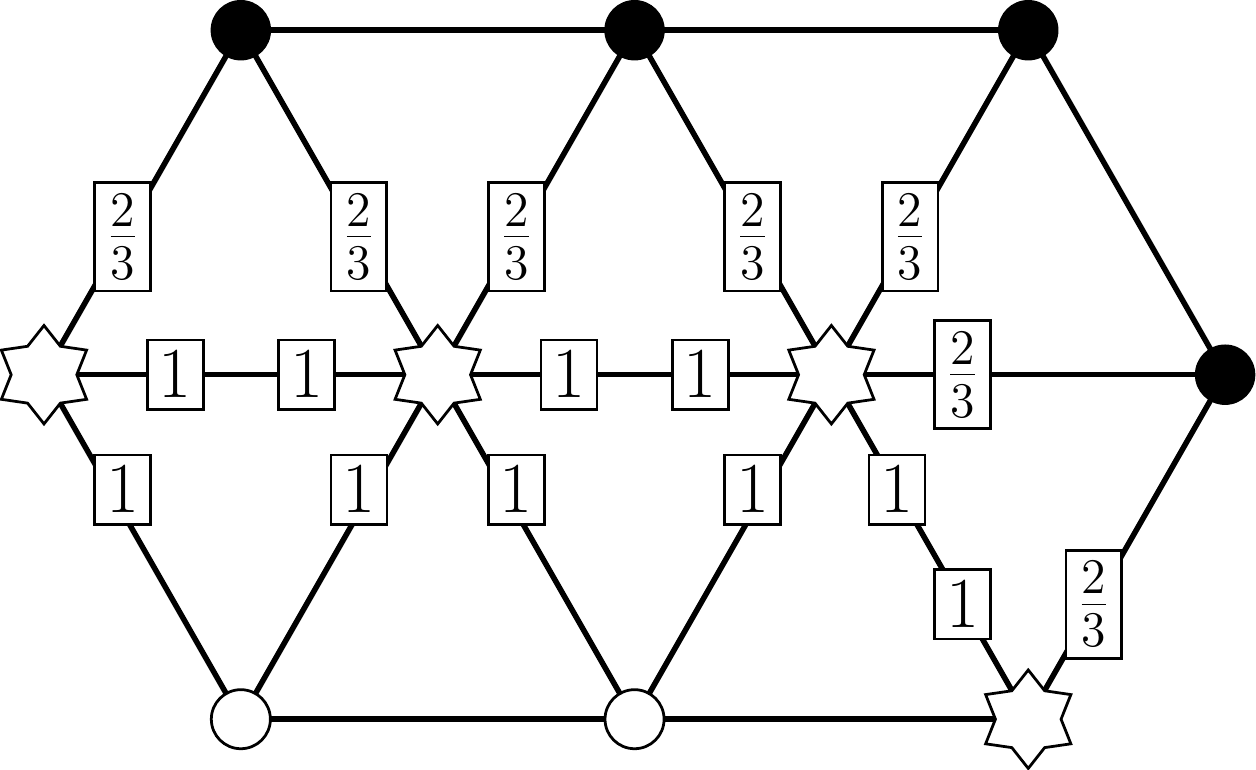}
	\caption{\small The geometry reconstruction coefficents $\lambda_{x,j}$ at the interface sites.}
	\label{fig:general_interface}
\end{figure}

For future reference we decompose the canonical triangulation $\T$ as follows:
\begin{equation}\label{def:ac}
\begin{aligned}
\mathcal{T}_\mathcal{A} :& = \{ T \in\mathcal{T} \,|\, T\cap(\mathcal{I}\cup \mathcal{C}) = \emptyset, \}, \\
\mathcal{T}_\mathcal{C} :& = \{ T \in\mathcal{T}\,|\, T\cap(\mathcal{I}\cup \mathcal{A}) = \emptyset, \} \quad \text{and} \\
\mathcal{T}_\mathcal{I}:& =\mathcal{T}\setminus (\mathcal{T}_\mathcal{C}\cup\mathcal{T}_\mathcal{A}).
\end{aligned}
\end{equation}

\subsection{Notation for a P$2$ finite element scheme}
In the atomistic and interface regions, the interactions are represented by
discrete displacement maps, which are identified with their linear
interpolant. Here, we identify the displacement map with its P1 interpolant. No
approximation error is committed.

On the other hand, in the continuum region where the interactions are
approximated by the Cauchy--Born energy, we could increase the accuracy by using
P$p$-FEM with $p > 1$. In later sections we will review that the Cauchy--Born
approximation yields a 2nd-order error, whereas employing the P1-FEM in the
continuum region would reduce the accuracy to first order. In fact, we will show  in that, with optimized mesh grading, P2-FEM is sufficient to obtain a convergence rate that cannot be improved by other choices of continuum discretisations. High-order P$p$-FEM with $p>2$ will increase the computational costs but yield the same error convergence rate (see \S~\ref{sec:high-order-discussion}).

Let $K>0$ denote the inner radius of the atomistic region,
\begin{equation*}
  K := \sup\b \{ r>0 \,|\, \mathcal{B}_r \cap \L \subset \As    \b \},
\end{equation*}
where $\mathcal{B}_r$ denotes the ball of radius $r$ centred at $0$.  In order
for the defect to be contained in the atomistic region we assume throughout that
$K \geq R_f$.

Let $\Omega_h$ denote the entire computational domain and $N>0$ denote the inner radius of $\Omega_h$, i.e.,
\begin{equation*}
N : = \sup \b \{r>0 \,|\, \mathcal{B}_r  \subset \Omega_h \b \}.
\end{equation*}
Let $\T _h $ be a finite element triangulation of $\Omega_h$ which satisfies, for $T \in \T_h $,
\begin{equation*}
  T \cap (\As \cup \Is) \ne \emptyset \quad \Rightarrow \quad T \in  \T.
\end{equation*}
In other words, $\T _h$ and $\T $ coincide in the atomistic and interface regions, whereas in the continuum region the mesh size may increase
towards the domain boundary. The optimal rate at which the mesh size increases
will be determined in later sections.

We note that the concrete construction of $\T_h$ will be based on the choice of
the domain parameters $K$ and $N$; hence, when emphasizing this dependence, we
will write $\T_h(K,N)$. We assume throughout that the family
$(\T_h(K,N))_{K, N}$ is \emph{uniformly shape-regular}, i.e., there exists $c>0$
such that,
\begin{equation}
  \label{eq:unif-shape-reg}
  \diam  (T) ^2 \le c |T| , \quad \forall T\in \T_h(K, N), \forall K \le N.
\end{equation}
This assumption eliminates the possibility of extreme angles on elements, which
would deteriorate the constants in finite element interpolation error
estimates. For the most part we will again drop the parameters from the notation
by writing $\T_h \equiv \T_h(K, N)$ but implicitly will always keep the
dependence.

Similar to \eqref{def:ac}, we define the atomistic, interface and continuum
elements as $\mathcal{T}_h^a,\mathcal{T}_h^i$ and $\mathcal{T}_h^c$,
respectively. Note that $\mathcal{T}_h^a = \mathcal{T}_\mathcal{A}$ and
$\mathcal{T}_h^i = \mathcal{T}_\mathcal{I}$. We also let $\mathcal{N}_h$ denote
the number of degrees of freedom of $\T _h$.

We define the finite element space of admissible displacements as
\begin{equation}
  \label{eq:defn-Uh}
\begin{aligned}
  \mathcal{U}_h : = \b\{u\in C(\R^2; \R^m) \sep\,&  {\rm{supp}} (u)\subset \Omega_h, u|_T \in \mathbb{P}^1(T) \text{ for } T\subset \mathcal{T}_h^a \cup \mathcal{T}_h^i \,\text{ and} \\
  & u|_T \in \mathbb{P}^2(T) \text{ for } T\subset \mathcal{T}_h^c \b\}.
\end{aligned}
\end{equation}
In defining $\mathcal{U}_h$ we have made two approximations to the class of
admissible displacements: (1) truncation to a finite computational domain and
(2) finite element coarse-graining.

The computational scheme is to find
\begin{equation}\label{eq:u_h}
u^{\rm g23}_h \in \arg \min \b\{ \E^{\rm g23}(u_h) - f(u_h) \sep  u_h \in \mathcal{U}_h\b\}.
\end{equation}

\begin{remark}
  $\mathcal{U}_h$ is embedded in $\mathcal{U}_0$ via point evaluation. Through
  this identification, $f(u_h)$ is well-defined for all $u_h \in
  \mathcal{U}_h$.

  We will make this identification {\em only} when we evaluate $f(u_h)$. The
  reason for this is a conflict when interpreting elements $u_h$ as lattice
  functions is that we identify lattice functions with their continuous
  interpolants with respect to the canonical triangulation $\mathcal{T}$, which
  would be different from the function $u_h$ itself. However, for the evaluation
  of $f(u_h)$ this issue does not arise.
\end{remark}

\section{Summary of results}

\subsection{Regularity of $u^\a$}
The approximation error analysis in later sections requires estimates on the
decay of the elastic fields away from the defect core. These results follow from
a natural stability assumption:

\bf{(A1)} \normalfont The atomistic solution is strongly stable, that is, there
exists $C_0>0$,
\begin{equation}\label{assum:stab_hom}
  \<\del^2 \E^{\a} (u^\a) \varphi, \varphi  \> \ge C_0 \|\D \varphi\|^2_{L^2},
  \quad \forall \varphi \in  \dot{\mathcal{U}}^{1,2},
\end{equation}
where $u^\a$ is a solution to \eqref{eq:y_a}.

\begin{corollary}\label{thm:decay}
  Suppose that {\bf (A1)} is satisfied, then there exists a constant $C>0$
  such that, for $1\le j \le r-2$,
  \begin{equation*}
    |D^ju^\a(\ell)|\le
      C| \ell |^{-1-j}.
  \end{equation*}
\end{corollary}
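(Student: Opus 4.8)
The plan is to reduce the corollary to a known decay estimate for the solution of a defect problem in a discrete elasticity model, applied to the linearised operator around the homogeneous state. The starting point is the observation that $u^\a$ solves the Euler--Lagrange equation $\langle \delta\E^\a(u^\a),\varphi\rangle = \langle \delta f(u^\a),\varphi\rangle$ for all $\varphi\in\dot{\mathcal{U}}^{1,2}$, where, by the assumption on $f$, the right-hand side is a "localised" force: $\partial_{u(\ell)}f(u^\a)=0$ for $|\ell|\ge R_f$. Since $\E^\a$ is $r$-times Fréchet differentiable (by the Lemma above) and $u^\a$ is strongly stable by {\bf (A1)}, the linearised operator $\delta^2\E^\a(u^\a)$ is a coercive, bounded, local (nearest-neighbour) operator on $\dot{\mathcal{U}}^{1,2}$, and the homogeneous far-field ensures its coefficients converge to those of the translation-invariant operator $\delta^2\E^\a(0)$ at a rate governed by the decay of $Du^\a$ itself.

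First I would set up a bootstrap/continuation argument. The zeroth step is to show $u^\a$ exists and $\|\D u^\a\|_{L^2}<\infty$, together with a crude pointwise bound such as $|Du^\a(\ell)|\to 0$; this follows from the variational structure and density of $\mathcal{U}_0$. Next, the key analytical input is a discrete Green's function / Combes--Thomas type estimate: for a coercive local difference operator $\mathcal{L}$ on the triangular lattice whose symbol is elliptic, the lattice Green's function $\mathcal{G}(\ell,m)$ satisfies $|D^j_\ell\mathcal{G}(\ell,m)|\lesssim (1+|\ell-m|)^{-j}$ in 2D for the relevant range of $j$ (with a logarithm only at $j=0$, which we never need here since $j\ge 1$). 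Writing $u^\a$ via this Green's function against the localised force and against the commutator between $\delta^2\E^\a(u^\a)$ and $\delta^2\E^\a(0)$, one obtains an integral (sum) inequality for $D u^\a$ of the schematic form
\begin{equation*}
  |D u^\a(\ell)| \lesssim (1+|\ell|)^{-2} + \sum_{m} (1+|\ell-m|)^{-2}\,|Du^\a(m)|\,|Du^\a(m)|,
\end{equation*}
and similarly for higher differences $D^j u^\a$ after differentiating the equation $j-1$ further times (legitimate since $r\ge 5$ gives enough smoothness, which is exactly why the statement is restricted to $1\le j\le r-2$). A standard iteration on such convolution inequalities in 2D then upgrades any initial algebraic decay rate $|Du^\a(\ell)|\lesssim(1+|\ell|)^{-\alpha}$ with $\alpha>0$ to the optimal $|D^j u^\a(\ell)|\lesssim(1+|\ell|)^{-1-j}$, the exponent $1$ being the generic decay rate of the elastic strain field of a point defect in two dimensions.

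The cleanest route, though, is simply to \emph{cite} the analogous result: this is precisely the content of the regularity theory developed in \cite{EhrOrtSha:2013} (and the companion analyses in \cite{2014-bqce, Or:2011a, OrShSu:2012}), where point-defect problems of exactly this type — nearest-neighbour many-body site potentials with a strongly stable ground state and a compactly supported defect force — are shown to have solutions with $|D^j u^\a(\ell)|\lesssim |\ell|^{-1-j}$. I would therefore present the proof as: verify that hypotheses {\bf (A1)} and the structural assumptions on $V$ and $f$ place us in the setting of that reference, and invoke the corresponding decay theorem, noting that the restriction $1\le j\le r-2$ matches the $C^r$ regularity of $V$. The main obstacle — were one to prove it from scratch rather than cite it — is establishing the sharp (as opposed to merely near-optimal) pointwise Green's function bounds for the variable-coefficient linearised operator and closing the nonlinear bootstrap without losing a logarithm; this is the technical heart of the lattice-defect regularity theory and the reason it is quoted here rather than reproved.
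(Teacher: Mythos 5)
Your proposal is correct and, in its final form, coincides with the paper's own proof, which consists precisely of invoking the decay result (Theorem 2.3) of \cite{EhrOrtSha:2013} after noting that \textbf{(A1)} and the structural assumptions on $V$ and $f$ place the problem in that setting. Your additional sketch of the Green's-function and bootstrap machinery is a fair description of what that cited theorem does internally, but the paper does not reproduce it either.
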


\begin{proof}
  See Theorem 2.3 in \cite{EhrOrtSha:2013}.
\end{proof}

\subsection{Stability}
In \cite{2013-stab.ac} it is shown that there is a ``universal'' instability in
2D interfaces for QNL-type a/c couplings: it is impossible to prove in full
generality that $\ddel \E^{\rm g23}(u^\a)$ is a positive definite operator, even
if we assume \eqref{assum:stab_hom}. Indeed, this potential instability is
universal to a wide class of generalized geometric reconstruction
methods. However, it is rarely observed in practice. To circumvent this
difficulty, we make the following standing assumption:

\bf{(A2)} \normalfont The {\em homogeneous lattice} is strongly stable under the
G23 approximation, that is, there exists $C^{\rm g23}_0 > 0$ which is independent of $K$
such that, for $K$ sufficiently large,
\begin{equation}\label{assum:stab_hom_g23}
\<\del^2 \E^{\rm g23}(0)\varphi_h, \varphi_h  \> \ge C^{\rm g23}_0\|\D \varphi_h\|^2_{L^2}, \quad \forall \varphi_h \in  \mathcal{U}_h.
\end{equation}

Since \eqref{assum:stab_hom_g23} does not depend on the solution it can be
tested numerically. But a precise understanding under which conditions
\eqref{assum:stab_hom_g23} is satisfied is still missing. In \cite{2013-stab.ac}
a method of stabilizing 2D QNL-type schemes with flat interfaces is introduced,
which could replace this assumption, however we are not yet able to extend this
stabilizing method for interfaces with corners, such as the configurations
discussed in this paper.

\subsection{Main results} \label{sec:main_results}
To state the main results it is convenient to employ a smooth interpolant to
measure the regularity of lattice functions. In Lemma \ref{lem:smooth_int}, we
define such an interpolant $\tilde{u} \in C^{2,1}(\R^2)$ for
$u \in \mathcal{U}_0$, for which there exists a universal constant $\tilde{C}$
such that, for all $q \in [1,\infty]$, $0 \le j \le 3$,
\begin{displaymath}
  |D^j u(\ell)| \leq \tilde{C} \| \nabla^j \tilde{u} \|_{L^1(\omega_\ell)}
  \quad \text{and} \quad
  \|\D^j \tilu \|_{L^q(T)} \le
  \tilde{C} \|D^j u \|_{\ell^q(\Lambda \cap T)}
\end{displaymath}
where $\omega_\ell: = \ell+\mA(-1,1)^2$.

\subsubsection{Consistency error estimate}
In (\ref{def:Pi_h}) we define a quasi-best approximation operator
$\Pi_h : \mathcal{U}_0 \rightarrow \mathcal{U}_h$, which truncates an atomistic
displacement to enforce the homogeneous Dirichlet boundary condition, and then
interpolates it onto the finite element mesh.

Our main result is the following consistency error estimate.

\begin{theorem}\label{thm:consist}
  If $u^\a $ is a solution to \eqref{eq:y_a} then we have, for all
  $\varphi_h \in \mathcal{U}_h$,
	\begin{equation}\label{eq:consist}
	\begin{aligned}
	\langle \del \E^{\rm g23}(\Pi_h u^\a ),\varphi_h\rangle \lesssim & \bigg( \|\D ^2 \tilde{u}^{\a}\|_{L^2(\Omega^{\rm i})}+\|\D ^3 \tilde{u}^{\a} \|_{L^2(\Omega^{\rm c})}
	+ \|\D ^2 \tilde{u}^{\a} \|^2_{L^4(\Omega^{\rm c})}\\
	& + \|h^2\D ^3 \tilde{u}^{\a} \|_{L^2(\Omega_h^{\rm c})} + \|\D  \tilde{u}^{\a} \|_{L^2(\R ^2 \setminus \mathcal{B}_{N/2})} \\
	& + N^{-1} \| h^2\nabla^2 \tilu \|_{L^2(\mathcal{B}_{N} \setminus B_{N/2})} \bigg) \|\D \varphi_h\|_{L^2(\R ^2 \setminus \Omega^\a)},
	\end{aligned}
	\end{equation}
	where $\Omega_h^{\c}$ corresponds to the continuum region of $\Omega_h$, and $h(x): = \text{\rm diam}(T)$ with $x \in T \in \T _h$.
\end{theorem}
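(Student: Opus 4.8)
The plan is to estimate the residual $\langle \delta \E^{\rm g23}(\Pi_h u^\a), \varphi_h \rangle$ by comparing it term-by-term against the exact equilibrium condition $\langle \delta \E^\a(u^\a), \varphi \rangle = \langle \delta f(u^\a), \varphi \rangle$. Since $\varphi_h \in \mathcal{U}_h$ vanishes outside $\Omega_h$ and the defect potential $f$ is localised in the atomistic region where $\E^{\rm g23}$ and $\E^\a$ coincide exactly, the $f$-contributions cancel and what remains is a purely geometric/modelling error. First I would write the telescoping decomposition
\begin{equation*}
\langle \delta \E^{\rm g23}(\Pi_h u^\a), \varphi_h \rangle
= \big\langle \delta \E^{\rm g23}(\Pi_h u^\a) - \delta \E^{\rm g23}(u^\a), \varphi_h \big\rangle
+ \big\langle \delta \E^{\rm g23}(u^\a) - \delta \E^\a(u^\a), \varphi_h \big\rangle,
\end{equation*}
using that $\varphi_h$ can be extended by zero and $\langle \delta \E^\a(u^\a), \varphi_h\rangle = \langle\delta f(u^\a),\varphi_h\rangle$ with the $f$-term exactly reproduced by $\E^{\rm g23}$. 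This splits the problem into a \emph{finite element / coarse-graining error} (first bracket) and a \emph{coupling consistency error} (second bracket).

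For the coupling consistency error I would decompose $\R^2$ into the atomistic elements $\T_{\mathcal A}$, the interface elements $\T_{\mathcal I}$, and the continuum elements $\T_{\mathcal C}$. On $\T_{\mathcal A}$ the two energies agree identically, so there is no contribution. On the interface region, I would expand $V^i_\ell(Du^\a(\ell)) = V\big((\RO_\ell D_j u^\a(\ell))_{j=1}^6\big)$ versus $V(Du^\a(\ell))$; by patch-test consistency \eqref{econs}–\eqref{fcons} the zeroth- and first-order terms in a Taylor expansion about a homogeneous state cancel, leaving a second-order remainder controlled by $|D^2 u^\a|$ on the interface, which via the smooth interpolant bounds of Lemma~\ref{lem:smooth_int} gives the $\|\D^2\tilde u^\a\|_{L^2(\Omega^{\rm i})}$ term. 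On the continuum region I would compare the atomistic sum $\sum_{\ell} V(Du^\a(\ell))$ against $\int_{\Omega^\c} W(\D u^\a)$: the standard Cauchy--Born consistency estimate (quadrature/summation-by-parts, exploiting the point symmetry of $V$ to kill the first-order term) produces a contribution of the form $\|\D^3 \tilde u^\a\|_{L^2(\Omega^{\rm c})} + \|\D^2\tilde u^\a\|_{L^4(\Omega^{\rm c})}^2$, the quadratic $L^4$ term arising from the second-order Taylor remainder of $W$.

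For the finite element error $\langle \delta \E^{\rm g23}(\Pi_h u^\a) - \delta \E^{\rm g23}(u^\a), \varphi_h\rangle$, only the continuum (Cauchy--Born integral) part contributes a genuine approximation error, since $\Pi_h u^\a = u^\a$ on the atomistic and interface elements by construction of $\T_h$ and $\Pi_h$. Writing the difference as $\int_{\Omega_h^{\rm c}} \big(\partial W(\D \Pi_h u^\a) - \partial W(\D u^\a)\big) : \D\varphi_h$, I would apply a first-order Taylor expansion of $\partial W$ and bound $\|\D(\Pi_h u^\a - u^\a)\|_{L^2(T)}$ on each continuum element $T$ by the P2 interpolation error $\lesssim h_T^2 \|\D^3 \tilde u^\a\|_{L^2(\tilde T)}$ (using uniform shape-regularity \eqref{eq:unif-shape-reg} and the interpolation bounds for the smooth interpolant), yielding $\|h^2 \D^3 \tilde u^\a\|_{L^2(\Omega_h^{\rm c})}$. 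The truncation error — the part coming from $\Pi_h$ enforcing the Dirichlet condition near $\partial\Omega_h$ — requires separate care: on the annulus $\mathcal{B}_N\setminus\mathcal{B}_{N/2}$ the truncated displacement differs from $u^\a$ by $O(|u^\a|)$, and using $\|u^\a\|$ bounds on far annuli together with the $N^{-1}$ factor from a Poincaré/scaling argument on that annulus gives the last two terms $\|\D\tilde u^\a\|_{L^2(\R^2\setminus\mathcal{B}_{N/2})}$ and $N^{-1}\|h^2\D^2\tilde u\|_{L^2(\mathcal{B}_N\setminus B_{N/2})}$. Finally I would collect all contributions, factoring out $\|\D\varphi_h\|_{L^2(\R^2\setminus\Omega^\a)}$ (legitimate since every error term is supported outside the atomistic region), to reach \eqref{eq:consist}.

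The main obstacle I anticipate is the treatment of the interface term with full rigour: one must carry out a careful Taylor expansion of the reconstructed potential $V^i_\ell$ around a suitable homogeneous reference state and verify that \eqref{econs}–\eqref{fcons} really do annihilate the zeroth and first order contributions \emph{locally at every interface site}, including the corner configurations permitted by \textbf{(A0)} where the coefficients $\lambda_{\ell,j}$ are inhomogeneous. Keeping track of the geometry of $\T_{\mathcal I}$ (which stencils $\RO_\ell$ touches, and converting point values $D^2 u^\a(\ell)$ into the $L^2(\Omega^{\rm i})$ norm of $\D^2\tilde u^\a$ without losing the locality needed for the final $\|\D\varphi_h\|$ factorisation) is the delicate bookkeeping step; the continuum/Cauchy--Born and pure FEM estimates are comparatively standard.
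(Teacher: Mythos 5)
Your two-bracket splitting (finite-element error around $\E^{\rm g23}$ plus coupling error $\delta\E^{\rm g23}(u^\a)-\delta\E^\a(u^\a)$, with the $f$-terms cancelled through the exact equilibrium) is a legitimate alternative organisation, and your treatment of the interface and Cauchy--Born contributions reproduces in spirit the modelling-error theorem that the paper simply cites for its $\delta_4$. But there is a genuine gap in your finite-element bracket. You bound $\|\nabla(\Pi_h u^\a-u^\a)\|_{L^2(T)}$ by $h_T^2\|\nabla^3\tilde u^\a\|_{L^2}$; this is not correct, because $u^\a$ is only the piecewise affine interpolant of lattice values, not a $W^{3,2}$ function. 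The P2 interpolation estimate controls $\Pi_h u^\a-\tilde u^\a$ (this is Lemma~\ref{lem:ba}), while the leftover piece $\nabla\tilde u^\a-\nabla u^\a$ is a plain first-order interpolation error of size $\|\nabla^2\tilde u^\a\|_{L^2(\Omega^{\rm c})}\sim K^{-2}$, carrying no factor of $h$ or any other small parameter. That quantity does not appear on the right-hand side of \eqref{eq:consist} and, if kept, would degrade the result to the P1 rate, defeating the point of the theorem. The paper eliminates it by a superconvergence argument (the term $\delta_3$ in \S 6.4): since the atomistic triangulation is uniform and the test function used there is piecewise \emph{linear}, the directional derivative $\nabla_{a_j}\varphi$ is the same on the two triangles sharing an edge in direction $a_j$, so the element integrals can be regrouped over edge patches and a midpoint-rule estimate (Lemma~\ref{lem:mprule}) gains one extra order, leaving only $\|\nabla^3\tilde u^\a\|_{L^2(\Omega^{\rm c})}+\|\nabla^2\tilde u^\a\|^2_{L^4(\Omega^{\rm c})}$ plus an interface-neighbourhood term. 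Your scheme cannot invoke this directly, because in your decomposition the pairing is with the P2 function $\varphi_h$, whose gradient is not constant across those fine edges.

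This points to the second omission: you never construct an atomistic test function. The atomistic first variation pairs only with lattice differences $D\varphi_h(\ell)$, whereas the Cauchy--Born part of $\delta\E^{\rm g23}$ pairs with the true gradient of the piecewise quadratic $\varphi_h$ on coarse elements; this discrepancy sits inside your second bracket and is never estimated, and it is also needed to justify applying the lattice modelling-error result there. The paper resolves it by building $\varphi=\Pi_h^*\varphi_h$, a Carstensen-type quasi-interpolant equal to $\varphi_h$ on $\As\cup\Is\cup\Is^+$, and estimating $\delta_1=\int_{\Omega^{\rm c}}\partial_\mF W(\nabla\tilde u^\a):(\nabla\varphi_h-\nabla\varphi)$ by integration by parts plus local Poincar\'e inequalities on patches, which also yields the stability $\|\nabla\varphi\|\lesssim\|\nabla\varphi_h\|$ localized to $\R^2\setminus\Omega_h^\a$ that is needed to end with the factor $\|\nabla\varphi_h\|_{L^2(\R^2\setminus\Omega^\a)}$. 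Without some such construction (or an explicit estimate of the residual involving $\nabla\varphi_h$ minus its lattice interpolant, with the correct localisation), neither the cancellation with the atomistic Euler--Lagrange equation nor the final factorisation in \eqref{eq:consist} is justified.
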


\subsubsection{Optimizing the approximation parameters}
\label{sec:optim-params}
Before we estimate the
error $\|\D u^\a -\D u_h\|_{L^2}$, we optimize the approximation parameters in
the computational scheme. This means that the radius $K$ of the atomistic
region, the radius $N$ of the entire computational domain and the mesh size $h$
should satisfy certain balancing relations. We only outline the result of this
optimisation and refer to \S~\ref{sec:optimal_mesh} for the details.

Due to the decay estimates on $\tilde{u}^\a$ the dominating terms in
(\ref{eq:consist}) turn out to be
\begin{equation}
  \label{eq:critical_terms_to_balance}
  \|\D ^2 \tilde{u}^{\a}\|_{L^2(\Omega^{\rm i})}
  \qquad \text{and} \qquad
  \|\D \tilde{u}^{\a} \|_{L^2(\R ^2 \setminus \mathcal{B}_{N/2})}.
\end{equation}
(We will see momentarily that the mesh size plays a minor role.)  These two
terms result from the nature of the coupling scheme and the far-field truncation
error. In particular, both of these cannot be improved by the choice of
discretisation of the Cauchy--Born model, e.g., order of the FEM. We also note
that, if we had employed a P1-FEM, then the limiting factor would have been
$\| h \nabla^2 \tilu^\a \|_{L^2(\Omega^{\rm c})}$.

We can balance the two terms in~\eqref{eq:critical_terms_to_balance} by choosing
$N \approx K^{5/2}$. It then remains to determine a mesh-size so that the finite
element error contribution,
\begin{displaymath}
  \|h^2\D ^3 \tilde{u}^{\a} \|_{L^2(\Omega^{\rm c}_h)} \qquad \text{and} \qquad  N^{-1} \| h^2\nabla^2 \tilu \|_{L^2(\mathcal{B}_{N} \setminus B_{N/2})}
\end{displaymath}
remains small in comparison. We show that the scaling
$h(x) \approx \left( \frac{|x|}{K}\right)^\beta$ is a suitable choice, with
$1 < \beta < 3/2$, under which both terms become of order $O(K^{-3})$.

Thus, we have determined the approximation parameters $(K, N, h)$ in terms
of a single parameter $K$. The quasi-optimal relations for P2-FEM discretisatino
of the Cauchy--Born model are summarised in Table~\ref{tab:params}.

\begin{table}
\begin{center}
  \begin{tabular}{r|c|c|c|c}
      &  $\beta$  & $N$ & $\mathcal{N}_h$ & consistency error \\
    \hline P2-FEM & $ \left(1 , \frac{3}{2} \right) $ & $K^{5/2}$ & $K^2$ & $K^{-5/2} $ \\[1mm]
    \hline P1-FEM & $ \left(1 , \frac{3}{2} \right) $ & $K^{2}$ & $K^2$ & $K^{-2} $ \\[1mm]
  \end{tabular}
\end{center}
\medskip
\caption{Quasi-optimal relations between approximation parameters for P2-GR23
  and, for comparision, for P1-GR23.}
  \label{tab:params}
\end{table}

\begin{corollary} Suppose that $N, h$ satisfy the relations of
  Table~\ref{tab:params}, the consistency error estimate (\ref{eq:consist}) in
  terms of the number of degrees of freedom $\mathcal{N}_h$ can be written as
  \begin{equation}\label{eq:consis_N}
    \|\del \E^{\rm g23}(\Pi_h u^\a )\|_{\mathcal{U}^{-1,2}}
    \lesssim \mathcal{N}_h^{-5/4}.
  \end{equation}
\end{corollary}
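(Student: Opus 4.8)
The plan is to combine the consistency error estimate of Theorem~\ref{thm:consist} with the quasi-optimal parameter choices of Table~\ref{tab:params} and the decay estimates of Corollary~\ref{thm:decay}, translating everything into powers of $K$, and finally re-expressing the resulting rate in terms of $\mathcal{N}_h$. First I would recall that, by the smooth interpolant properties stated in~\S\ref{sec:main_results} together with Corollary~\ref{thm:decay}, the derivatives of $\tilu^\a$ decay like $|\D^j\tilu^\a(x)|\lesssim |x|^{-1-j}$, so each term on the right-hand side of~\eqref{eq:consist} can be bounded by an explicit power of $K$ (for the interface and FEM terms) or of $N$ (for the truncation terms), using the annular structure of the domains and the mesh-grading ansatz $h(x)\approx(|x|/K)^\beta$ with $1<\beta<3/2$.

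Next I would carry out the term-by-term bookkeeping: the interface term $\|\D^2\tilu^\a\|_{L^2(\Omega^{\rm i})}$ scales like $K^{-2}\cdot K^{1/2}=K^{-5/2}$ (integrating $|x|^{-6}$ over an annulus of radius $\approx K$ and width $O(1)$, but actually over the interface layer one picks up the correct power — this is exactly the computation carried out in~\S\ref{sec:optimal_mesh} that I may cite); the far-field truncation term $\|\D\tilu^\a\|_{L^2(\R^2\setminus\mathcal{B}_{N/2})}$ scales like $N^{-1}\approx K^{-5/2}$ by the choice $N\approx K^{5/2}$; and the FEM contributions $\|h^2\D^3\tilu^\a\|_{L^2(\Omega_h^{\rm c})}$ and $N^{-1}\|h^2\D^2\tilu^\a\|_{L^2(\mathcal{B}_N\setminus B_{N/2})}$ are both shown in~\S\ref{sec:optimal_mesh} to be $O(K^{-3})$, hence negligible compared with $K^{-5/2}$. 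Summing, the right-hand side of~\eqref{eq:consist} is $\lesssim K^{-5/2}\|\D\varphi_h\|_{L^2}$, which by definition of the dual norm gives $\|\del\E^{\rm g23}(\Pi_h u^\a)\|_{\mathcal{U}^{-1,2}}\lesssim K^{-5/2}$.

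Finally I would convert to $\mathcal{N}_h$. Here one counts degrees of freedom: the atomistic and interface regions contribute $O(K^2)$ nodes, and the graded continuum mesh on the annulus $\mathcal{B}_N\setminus\mathcal{B}_K$ with $h(x)\approx(|x|/K)^\beta$ contributes $\int_K^N \frac{r}{h(r)^2}\,\dd r\approx K^{2\beta}\int_K^N r^{1-2\beta}\,\dd r$, which for $1<\beta<3/2$ converges (up to constants) to a quantity $\lesssim K^2$; since P2 elements only multiply the node count by a fixed factor, one gets $\mathcal{N}_h\approx K^2$, i.e. $K\approx\mathcal{N}_h^{1/2}$. Substituting $K^{-5/2}=(K^2)^{-5/4}\approx\mathcal{N}_h^{-5/4}$ yields~\eqref{eq:consis_N}. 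The main obstacle is the mesh-grading integral: one must check carefully that the exponent range $1<\beta<3/2$ simultaneously keeps the degree-of-freedom integral $\int_K^N r^{1-2\beta}\,\dd r$ bounded by $O(K^2)$ (which needs $\beta>1$) \emph{and} keeps the two FEM error terms at $O(K^{-3})$ rather than larger (which needs $\beta<3/2$), so that neither constraint is violated and the clean rate $\mathcal{N}_h^{-5/4}$ genuinely emerges; this is precisely the balancing computation deferred to~\S\ref{sec:optimal_mesh}, which I would invoke rather than redo here.
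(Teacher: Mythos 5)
Your proposal is correct and follows essentially the same route as the paper's own proof in \S\ref{sec:optimal_mesh}: insert the decay estimates of Corollary~\ref{thm:decay} into each term of \eqref{eq:consist}, balance the interface error $K^{-5/2}$ against the far-field truncation error $N^{-1}$ via $N\approx K^{5/2}$, verify that the graded mesh with $1<\beta<3/2$ keeps both FEM terms at $O(K^{-3})$ while the degree-of-freedom count satisfies $\mathcal{N}_h\approx K^2$, and conclude $K^{-5/2}\approx \mathcal{N}_h^{-5/4}$. The only blemish is the intermediate factorisation $K^{-2}\cdot K^{1/2}$ for the interface term, which should read $K^{-3}\cdot K^{1/2}$ (i.e.\ $(K\cdot K^{-6})^{1/2}$), but your stated value $K^{-5/2}$ is the correct one.
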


\subsubsection{Error estimate} To complete our summary of results, we now use
the Inverse Function Theorem to obtain error estimates for the strains and the
energy.

\begin{theorem}\label{theo:main}
  Suppose that \textbf{(A0), (A1)} and \textbf{ (A2)} are satisfied and that the
  quasi-optimal scaling of $N, h$ from Table \ref{tab:params} is
  satisfied. Then, for sufficiently large atomistic region size $K$, a solution
  $u^{\rm g23}_h$ to \eqref{eq:u_h} exists which satisfies the error estimates
  \begin{align}
  \label{eq:u_error}  \|\D u^\a - \D u^{\rm g23}_h\|_{L^2} &\lesssim \mathcal{N}_h^{-5/4}, \quad \text{and}\\
 \label{eq:E_error}
   \big|[\E^\a (u^\a)-f(u^\a)] - [\E^{\rm g23}(u^{\rm g23}_h) - f(u^{\rm g23})] \big|
   &\lesssim \mathcal{N}_h^{-7/4},
   \end{align}
   where $\mathcal{N}_h$ is the number of degrees of freedom.
\end{theorem}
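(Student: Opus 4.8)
The plan is to combine the consistency estimate of Theorem~\ref{thm:consist} with the stability assumption \textbf{(A2)} via a quantitative inverse function theorem (in the form of \cite{acta} or the standard version used in a/c analysis). First I would set up the residual: for $u^\a$ the atomistic solution and $\Pi_h u^\a \in \mathcal{U}_h$ the quasi-best approximation, Theorem~\ref{thm:consist} together with the decay estimates of Corollary~\ref{thm:decay} and the parameter choices of Table~\ref{tab:params} gives
\begin{equation*}
  \|\del \E^{\rm g23}(\Pi_h u^\a) - \del f(\Pi_h u^\a)\|_{\mathcal{U}_h^{-1,2}}
  \lesssim \mathcal{N}_h^{-5/4},
\end{equation*}
where the contribution from $f$ is controlled because $\Pi_h u^\a$ and $u^\a$ agree on the defect core (recall $K \ge R_f$) and $f$ depends only on displacements there. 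The stability estimate needed is for $\del^2 \E^{\rm g23}$ at $\Pi_h u^\a$, not at $0$; so the second step is to upgrade \textbf{(A2)} from the homogeneous lattice to a neighbourhood of $\Pi_h u^\a$. This is done by a Lipschitz bound on $\varphi_h \mapsto \<\del^2\E^{\rm g23}(v)\varphi_h,\varphi_h\>$ in $v$, together with the fact that $\|\D \Pi_h u^\a\|_{L^\infty}$ is small away from the core and $\Pi_h u^\a$ is a small perturbation of a homogeneous state; for $K$ large enough this yields $\<\del^2\E^{\rm g23}(\Pi_h u^\a)\varphi_h,\varphi_h\> \ge \tfrac12 C_0^{\rm g23}\|\D\varphi_h\|_{L^2}^2$ on $\mathcal{U}_h$.

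With a stable, consistent approximate solution in hand, I would apply the inverse function theorem (e.g.\ \cite[Lemma~2.2]{2013-stab.ac} or the version in \cite{EhrOrtSha:2013}): there exists a solution $u_h^{\rm g23}$ to \eqref{eq:u_h} with
\begin{equation*}
  \|\D u_h^{\rm g23} - \D \Pi_h u^\a\|_{L^2}
  \lesssim \|\del \E^{\rm g23}(\Pi_h u^\a) - \del f(\Pi_h u^\a)\|_{\mathcal{U}_h^{-1,2}}
  \lesssim \mathcal{N}_h^{-5/4},
\end{equation*}
provided the residual is below the threshold fixed by the stability constant and the second-derivative Lipschitz constant — which holds for $K$ large. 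The strain error \eqref{eq:u_error} then follows from the triangle inequality once I check the best-approximation bound $\|\D u^\a - \D \Pi_h u^\a\|_{L^2} \lesssim \mathcal{N}_h^{-5/4}$; this is exactly the interpolation/truncation analysis behind the terms in \eqref{eq:consist}, re-summed using Corollary~\ref{thm:decay} and Table~\ref{tab:params}, and is essentially the same computation that produces the right-hand side of the consistency estimate.

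For the energy error \eqref{eq:E_error} I would exploit the quadratic (second-order) nature of the error near a minimiser. Writing $E^\a(u):=\E^\a(u)-f(u)$ and $E^{\rm g23}(u):=\E^{\rm g23}(u)-f(u)$, I would estimate $|E^\a(u^\a) - E^{\rm g23}(u_h^{\rm g23})|$ by inserting $\Pi_h u^\a$: the difference $|E^{\rm g23}(\Pi_h u^\a) - E^{\rm g23}(u_h^{\rm g23})|$ is $O(\|\D\Pi_h u^\a - \D u_h^{\rm g23}\|_{L^2}^2) = O(\mathcal{N}_h^{-5/2})$ since $\del E^{\rm g23}(u_h^{\rm g23})=0$ on $\mathcal{U}_h$, while $|E^\a(u^\a) - E^{\rm g23}(\Pi_h u^\a)|$ splits into an approximation part, bounded by $\|\del E^\a(u^\a)\|\,\|\D u^\a-\D\Pi_h u^\a\| + O(\|\D u^\a - \D\Pi_h u^\a\|^2)$ (the first term vanishes since $u^\a$ is a critical point, leaving $O(\mathcal{N}_h^{-5/2})$), plus a modelling/consistency part of the form $|[\E^\a - \E^{\rm g23}](\Pi_h u^\a)|$ coming from the interface reconstruction and Cauchy--Born substitution on $\Omega^{\rm c}$. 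The latter is the genuinely new contribution: bounding it requires a Taylor expansion of $V$ and $W$ around the homogeneous state at each interface site and continuum element, using the $L^\infty$-smallness of strains together with the decay rates, and it produces the $\mathcal{N}_h^{-7/4}$ rate — roughly the square root of $\mathcal{N}_h^{-5/2}$ times the $\mathcal{N}_h^{-5/4}$ residual, reflecting a Cauchy--Schwarz pairing $\langle \text{residual}, \D u^\a - \D\Pi_h u^\a\rangle$. I expect this energy-consistency bookkeeping — getting the $-7/4$ exponent sharp by carefully tracking which terms are genuinely quadratic and which are only linear in the residual — to be the main obstacle; the inverse-function-theorem mechanics and the strain estimate are routine once Theorem~\ref{thm:consist} and \textbf{(A2)} are granted.
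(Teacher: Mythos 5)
Your treatment of the strain error \eqref{eq:u_error} is essentially the paper's argument: consistency (Theorem \ref{thm:consist} with the parameter balancing), stability at $\Pi_h u^\a$ obtained by perturbing \textbf{(A2)} (this is exactly what Lemma \ref{lem:stab} encapsulates), the inverse function theorem, and the triangle inequality with the best-approximation bound of Lemma \ref{lem:ba}. That part is fine. Your energy-error decomposition is also structurally reasonable and slightly different from the paper's: you insert $\E^\a_f(\Pi_h u^\a)$ and split into $|\E^{\rm g23}_f(\Pi_h u^\a)-\E^{\rm g23}_f(u_h^{\rm g23})|$, $|\E^\a_f(u^\a)-\E^\a_f(\Pi_h u^\a)|$ (both quadratic, $O(\mathcal{N}_h^{-5/2})$, modulo the technical point that $\E^\a(\Pi_h u^\a)$ must be interpreted through lattice-point evaluation), and the energy modelling term $|[\E^\a-\E^{\rm g23}](\Pi_h u^\a)|$, whereas the paper compares line integrals from $0$ with an auxiliary test function $v=\Pi_h^*\Pi_h u^\a$ and disposes of the resulting extra term $e_{22}$ by a trapezoidal-rule/third-variation argument.

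The genuine gap is in the term you yourself identify as the crux. You claim the $\mathcal{N}_h^{-7/4}$ rate arises from ``a Cauchy--Schwarz pairing $\langle \text{residual}, \D u^\a - \D\Pi_h u^\a\rangle$, roughly the square root of $\mathcal{N}_h^{-5/2}$ times the $\mathcal{N}_h^{-5/4}$ residual.'' This is incorrect on both counts: such a pairing would be of size $\mathcal{N}_h^{-5/4}\cdot\mathcal{N}_h^{-5/4}=\mathcal{N}_h^{-5/2}$ (and your stated product also equals $\mathcal{N}_h^{-5/2}$, not $\mathcal{N}_h^{-7/4}$), and, more importantly, the energy difference $[\E^\a-\E^{\rm g23}](\Pi_h u^\a)$ is not of that form at all --- there is no factor $\D u^\a - \D \Pi_h u^\a$ available in it. The correct mechanism, and the reason the energy rate is only $-7/4$, is the following: using $\E^\a(0)=\E^{\rm g23}(0)$ one writes the modelling term as $\int_0^1 \langle \del\E^\a(t\Pi_h u^\a)-\del\E^{\rm g23}(t\Pi_h u^\a),\, \Pi_h u^\a\rangle\,{\rm d}t$, applies the first-variation consistency estimate along the path (size $\lesssim K^{-5/2}$), and pairs it with $\|\D\Pi_h u^\a\|_{L^2(\R^2\setminus\Omega^\a)}\lesssim K^{-1}\approx\mathcal{N}_h^{-1/2}$, since $|\D\Pi_h u^\a(x)|\lesssim|x|^{-2}$; this yields $K^{-7/2}\approx\mathcal{N}_h^{-7/4}$ and is precisely the paper's estimate of $e_{21}$. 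Without this step (or an equivalent one) your sketch does not justify \eqref{eq:E_error}; a Taylor expansion ``around the homogeneous state'' at interface sites and continuum elements, paired against the approximation error, will not produce the stated bound because the limiting factor is the slow decay of the solution itself outside the atomistic region, not the smallness of $\D u^\a-\D\Pi_h u^\a$.
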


%
%

\subsection{Setup of the numerical tests}

For our numerical tests, we consider an anti-plane displacement
$u : \L \rightarrow \R$. We choose a hexagonal atomistic region $\Omega^\a$ with
side length $K$ and one layer of atomistic sites outside $\Omega^\a$ as the
interface. To construct the finite element mesh, we add hexagonal layers of
elements such that, for each layer $j$,
$h(\text{layer }j) = (h(\text{layer }j-1)/K)^{\beta}$, with $\beta = 1.4$; see
Figure \ref{fig:coarse_domain}. The procedure is terminated once the radius of
the domain exceeds $N = \lceil K^{5/2} \rceil$. This construction guarantees the
quasi-optimal approximation parameter balance to optimise the P2-FEM error. The
derivation is given in Section \ref{sec:optimal_mesh}.

In our tests we compare the P2-G23 method against
\begin{itemize}
\item[(1)] a pure atomistic model with clamped boundary
condition: the construction of the domain is as in the P2-G23 method, but
without continuum region;
\item[(2)] a P1-G23 method: the construction is again identical to that of the
  P2-G23 method, but the P2-FEM in the definition of $\mathcal{U}_h$ is replaced
  by a P1-FEM. The same mesh scaling as for P2 is used (see also \cite{E:2006} where this is shown to be quasi-optimal).
\end{itemize}

The site potential is given by a nearest-neighbour embedded atom toy model,
\begin{equation*}
  V (Du): = G\left( \sum_{i = 1}^6 \rho(D_i u(\ell))\right)
\end{equation*}
with $G(s) := s+\frac12 s^2$ and $\rho(r) := \sin^2 (r\pi)$. This is the
anti-plane toy model as the one used in \cite{EhrOrtSha:2013}.

The external potential is defined by $\<f, u\> = 10 (u(0,0) - u(1,0))$, which
can be thought of as an elastic di-pole.  A steepest descent method,
preconditioned with a finite element Laplacian and fixed (manually tuned)
step-size, is used to find a minimizer $u^{\rm g23}_h $ of
$ \E^{g23}(u) - f(u)$, using $u_h = 0$ as the starting guess.

In order to compare the errors, we use a comparison solution with atomistic
region size $3K$ and other computational parameters scaled as above.

The numerical results, with brief discussions, are shown in Figures
\ref{fig:screw3_err2_k2}--\ref{fig:screw3_errE_dofs}. The two most important
observations are the following:
\begin{itemize}
\item[(1)] the numerical tests confirm the analytical predictions for the
  energy-norm error, but the experimental rates for the energy error are better
  than the analytical rates. Similar observations were also made in
  \cite{EhrOrtSha:2013}.
\item[(2)] With our specific setup, the improvement of the P2-GR23 over P1-GR23
  is clearly observed when plotting the error against
  $\#\mathcal{A} \propto \mathcal{N}_h$, but when plotted against
  $\mathcal{N}_h$ the improvement is only seen in the asymptotic regime. This
  indicates that further work is required, such as a posteriori adaption, to
  optimise the P2-GR23 in the pre-asymptotic regime as well.
\end{itemize}

\begin{figure}
	\centering
	\includegraphics[width=0.5\linewidth]{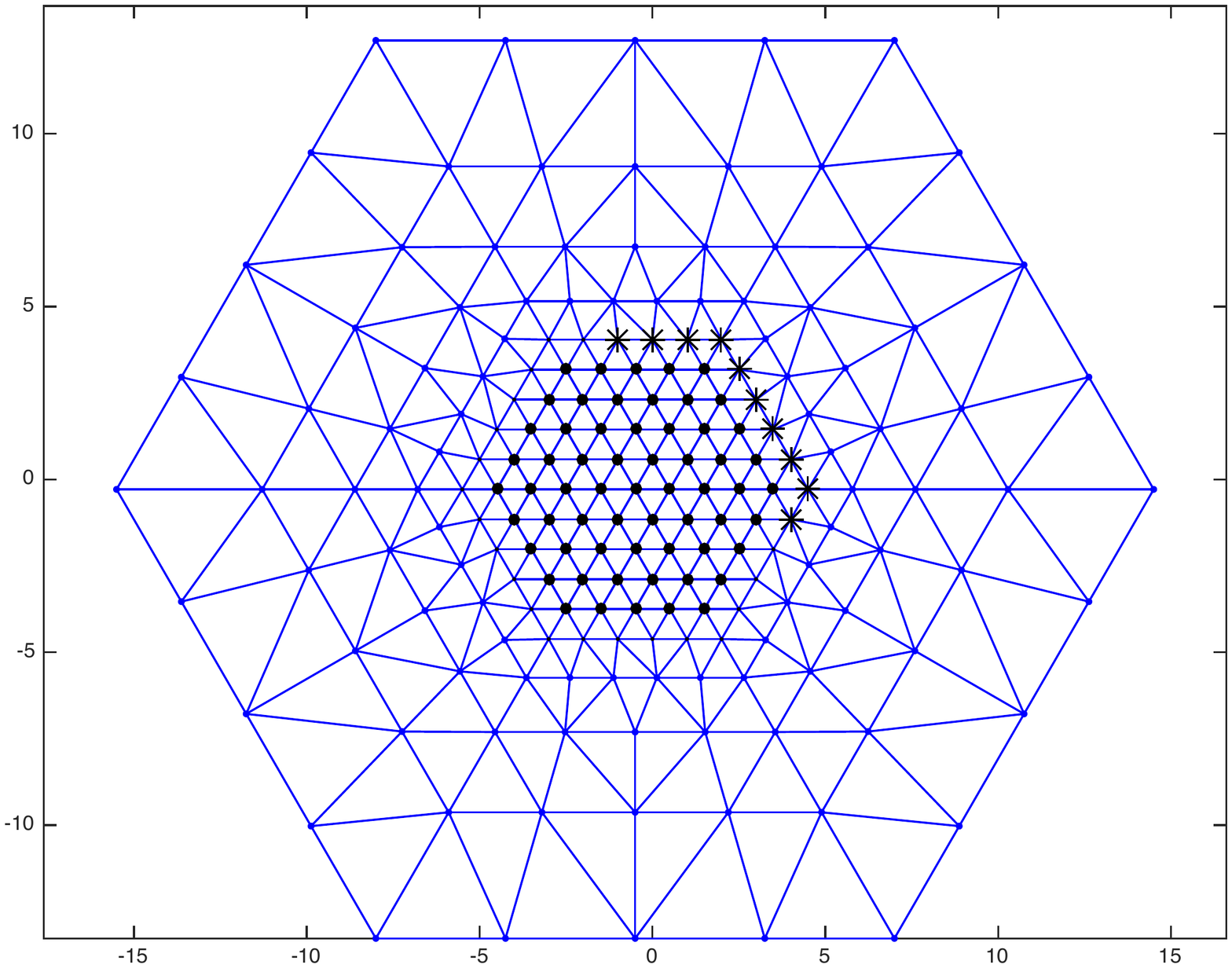}
	\caption{\small An example of the computatioanl mesh. The the vertices marked by "\textbullet" are the atomistic sites; the vertices marked by "$ \ast$" are the interface sites.}
	\label{fig:coarse_domain}
\end{figure}

\begin{figure}
	\centering
	\includegraphics[width=0.8\linewidth]{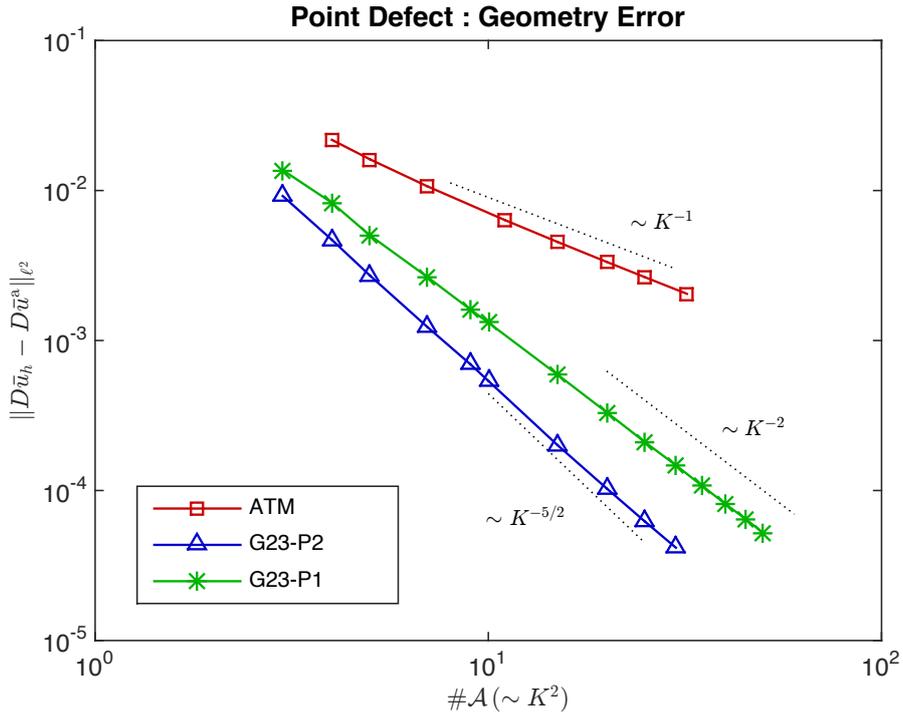}
	\caption{Error in energy norm plotted against $\# \As $. We clearly observe the
          predicted rate of convergence.}
	\label{fig:screw3_err2_k2}
\end{figure}

\begin{figure}
	\centering
	\includegraphics[width=0.8\linewidth]{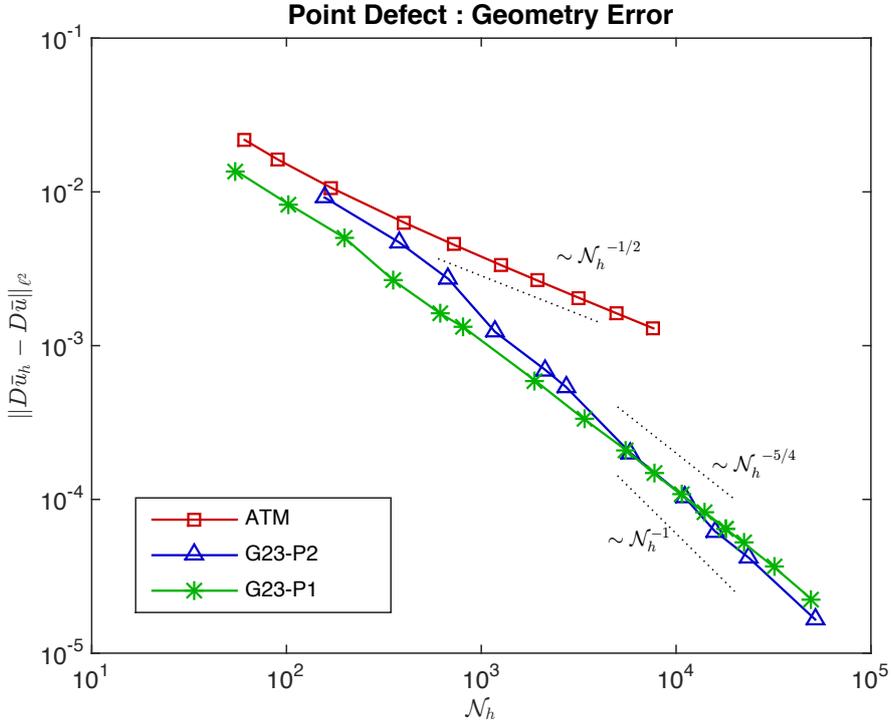}
	\caption{\small{Error in energy norm plotted against the number of
            degrees of freedom. The improvement of P2-FEM is now only seen
            asymptotically.}}
	\label{fig:screw3_err2_dofs}
\end{figure}

\begin{figure}
	\centering
	\includegraphics[width=0.8\linewidth]{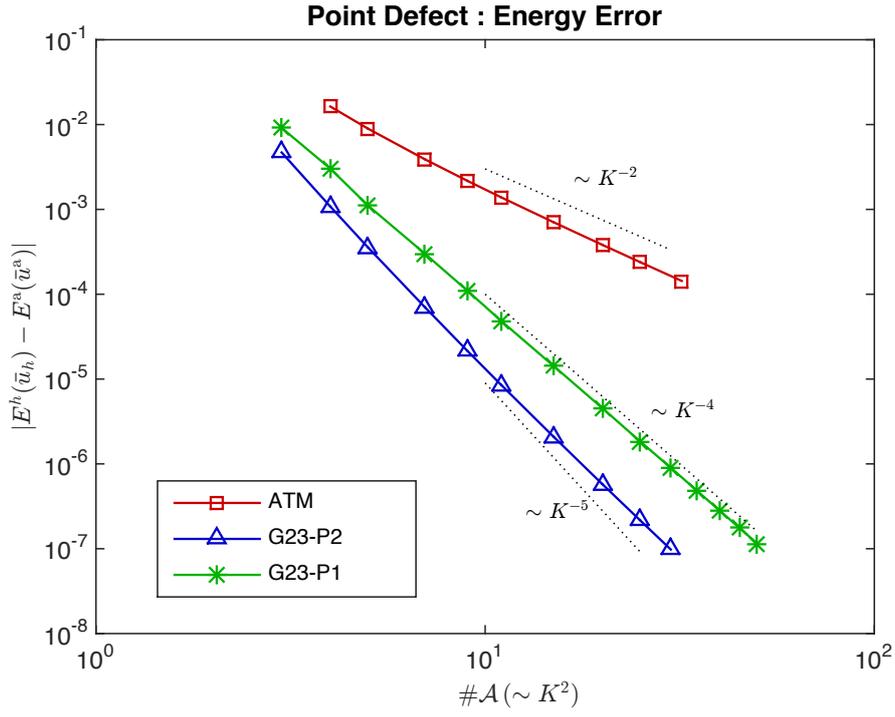}
	\caption{\small{The energy error plotted against $\# \As$. The observed
            rate of convergence is better than the rate predicted in Theorem
            \ref{theo:main}.}}
	\label{fig:screw3_errE_k2}
\end{figure}

\begin{figure}
	\centering
	\includegraphics[width=0.8\linewidth]{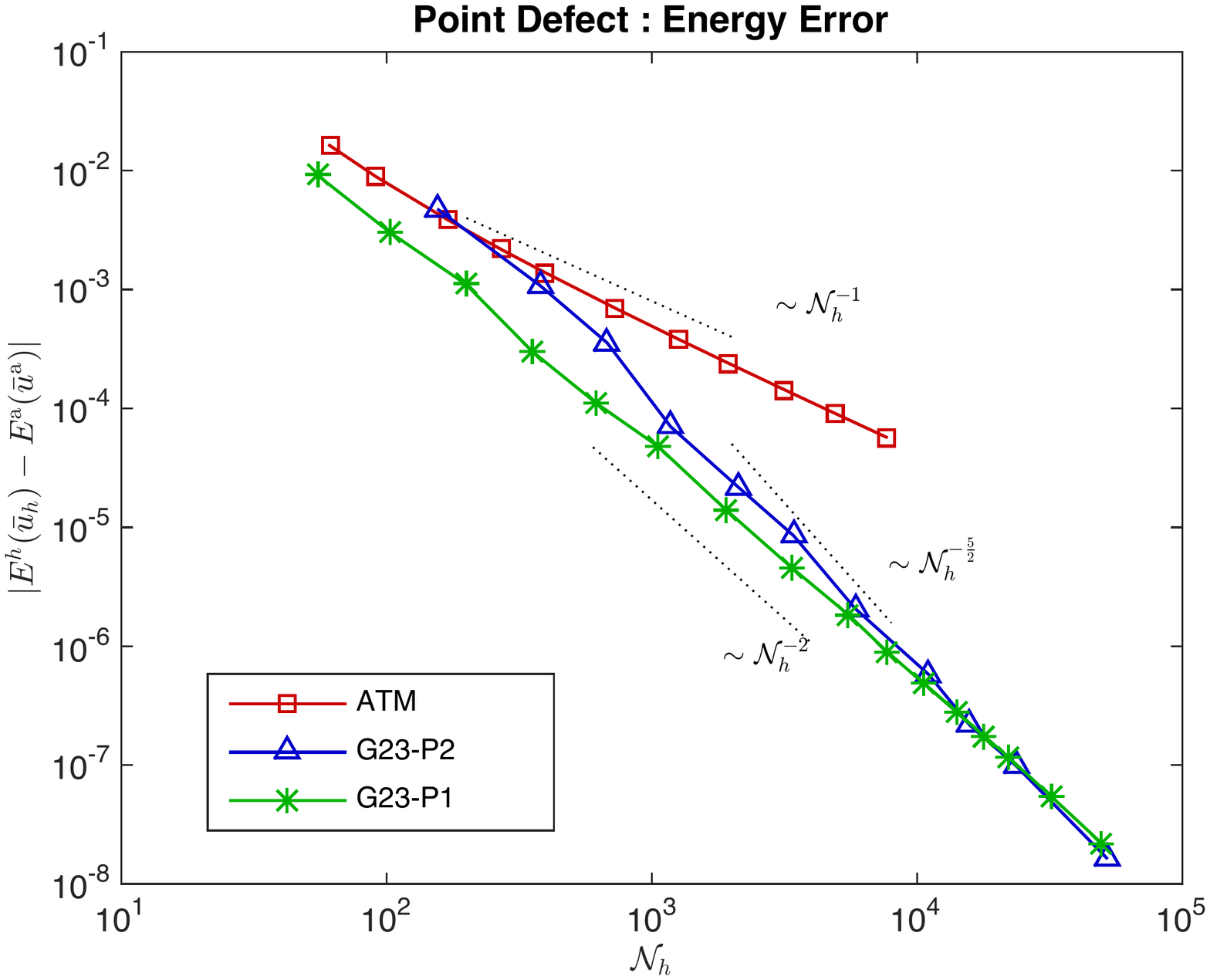}
	\caption{\small{The energy error plotted against the number of degrees
            of freedom. The improvement of P2-FEM over P1-FEM can again only be
            observed asymptotically.}}
	\label{fig:screw3_errE_dofs}
\end{figure}

\subsection{Extension to high-order FEM}
\label{sec:high-order-discussion}
If we apply higher-order FEM in the continuum region, then to extend our error
analysis we would need a smooth interpolant of $u \in \mathcal{U}_0$ with higher
regularity than $\tilde{u} \in C^{2,1}(\R ^2)$. A suitable extension given in \cite{2014-bqce} is, for arbitrary $n$, a $C^{n,1}$ piecewise polynomial
of degree $2n+1$ with properties analogous to those stated in Lemma
\ref{lem:smooth_int}. The resulting higher-order decay rate $|\D ^j \tilde{u}^\a
(x)| \lesssim |x|^{-j-1}$ indicates that the use of high-order FEM could be
beneficial.

However, as we have pointed out in \S~\ref{sec:optim-params}, if we employ the
mesh grading $h(x) =  (|x|/K)^\beta$ with $1<\beta<3/2$ in the continuum
region, the total approximation error cannot be improved by using P$p$-FEM
with $p>2$, since the dominating term is the interface error $\|\D
^2\tilde{u}^\a \|_{L^2(\Omega ^{\rm i})}$ for $p \ge 2$, which results from
the construction of G23 coupling and is not affected by the choice of FEM.

If we consider a coarser mesh for high-order FEM in hopes of reducing
the number of degrees of freedom, i.e., choosing $\beta \ge 3/2$, then applying
analogous calculations to those in \S \ref{sec:optimal_mesh} gives us the following
result:

{\it Employing P$p$-FEM with $p>2$, in order to match the convergence rate of the
   Cauchy--Born error term $\|\D ^3 \tilde{u}^\a \|_{L^2(\Omega^ \c )}\sim
   K^{-3}$, the highest mesh coarsening rate is
\begin{equation*}
   \beta = \frac{5}{3} - \frac{1}{3p}.
\end{equation*}
}

This means that the optimal mesh grading that P$p$-FEM can achieve without
compromising accuracy is no greater than $\frac{5}{3}$. However, in that case,
the number of degrees of freedom is always $\mathcal{O}(K^2)$.

In summary, despite the possibility of (slightly) reducing the number of
degrees of freedom, considering its computational complexity, we conclude
that higher-order FEM is not worthwhile to pursue. However, we emphasize
that this conclusion would need to be revisited if a coupling method with
higher-order interface error as well as continuum model error could be
devised.

\section{Conclusion}

We obtained a sharp energy-norm error estimate for the G23 coupling method with
P2-FEM discretisation of the continuum model. Furthermore, we demonstrated that,
with P1-FEM discretisation the FEM coarsening error is the dominating term in
the consistency error estimate, whereas for P2-FEM discretisation the interface
error becomes the dominating term. In particular, a P2-FEM discretisation yields
a more rapid decay of the error. Crucially though, since for P$p$-FEM with
$p \geq 2$ the interface contribution dominates the total error the P2-FEM is
already optimal. That is, increasing to $p > 2$ will not improve the rate of
convergence, but increase the computational cost and algorithmic complexity.

Numerically, we observe that the improvement of P2-GR23 over P1-GR23 is only
modest at low $\mathcal{N}_h$, hence a P2-GR23 scheme would be primarily of
interest if high accuracy of the solution is required. However, our numerical
results indicate that there is scope for further optimisation, using a
posteriori type techniques.

While our estimates for the error in energy-norm are sharp, our numerical
results show the estimates for the energy errors are suboptimal. We hightlight
the leading term in the error analysis which overestimate the error in Section
\ref{sec:energy_error}. We are unable, at present, to obtain an optimal energy
error estimate. This appears to be an open problem throughout the literature on
hybrid atomistic multi-scale schemes; see e.g. \cite{EhrOrtSha:2013}.

In summary we conclude that using P2-FEM is a promising improvement to the
efficiency of a/c coupling methods, but that some further work, both theoretical
and for its implementation may be need to exploit its full potential.

\section{Reduction to consistency}
\label{reduction to consistency}

Assuming the existence of an atomistic solution $u^\a$, we seek to prove the
existence of $u_h^{\rm g23} \in \mathcal{U}_h$ satisfying
\begin{equation}\label{dE_h}
  \< \del \E^{\rm g23} (u^{\rm g23}_h), \varphi_h\>
  = \<\delta f(u_h^{\rm g23}), \varphi_h\>,
  \quad \text{for all }\varphi_h \in \mathcal{U}_h,
\end{equation}
and to estimate the $\|u^\a - u^{\rm g23}_h\|$ in a suitable norm.

The error analysis consists of consistency and stability
estimates. Once these are established we apply the following theorem to obtain
the existence of a solution $u_h^{\rm g23}$ and the error estimate. The proof of
this theorem is standard and can be found in various references,
e.g. \cite[Lemma 2.2]{Ortner:qnl.1d}.

\begin{theorem}[The inverse function theorem]\label{theo:inverse}
  Let $\mathcal{U}_h$ be a subspace of $\mathcal{U}$, equipped with
  $\|\nabla \cdot \|_{L^2}$, and let
  $\mathcal{G}_h \in C^1(\mathcal{U}_h,\mathcal{U}_h^*)$ with
  Lipschitz-continuous derivative $\delta \mathcal{G}_h$:
\begin{equation*}
\|\delta \mathcal{G}_h(u_h)-\delta \mathcal{G}_h(v_h)\|_\mathcal{L} \le M \|\nabla u_h-\nabla v_h\|_{L^2} \quad \text{for all } u_h,v_h \in \mathcal{U}_h,
\end{equation*}
where $\|\cdot \|_\mathcal{L}$ denotes the
$\mathcal{L}(\mathcal{U}_h,\mathcal{U}_h^*)$-operator norm.

Let $\bar{u}_h \in \mathcal{U}_h$ satisfy
\begin{align}
\|\mathcal{G}_h(\bar{u}_h)\|_{\mathcal{U}_h ^*} &\le \eta, \label{inv_consist} \\
 \langle \delta \mathcal{G}_h(\bar{u}_h)v_h,v_h\rangle &\ge \gamma \|\nabla v_h\|^2_{L^2} \quad
 \text{ for all }v_h \in \mathcal{U}_h, \label{inv_stab}
\end{align}
such that $M,\eta, \gamma$ satisfy the relation
\begin{equation*}
\frac{2M\eta}{\gamma^2}<1.
\end{equation*}
Then there exists a (locally unique) $u_h \in \mathcal{U}_h$ such that $\mathcal{G}_h(u_h) = 0$,
\begin{align*}
\|\nabla u_h-\nabla \bar{u}_h\|_{L^2}&\le 2\frac{\eta}{\gamma}, \quad \text{and} \\
 \langle \delta \mathcal{G}_h(u_h)v_h,v_h\rangle & \ge \left( 1- \frac{2M\eta}{\gamma^2}\right)\gamma \|\nabla v_h\|^2_{L^2} \quad \text{for all } v_h \in \mathcal{U}_h.
\end{align*}
\end{theorem}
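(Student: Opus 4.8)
The plan is to recast the equation $\mathcal{G}_h(u_h)=0$ as a fixed-point problem for a Newton-type map and to apply the Banach fixed-point theorem on a ball centred at $\bar u_h$. First I would set $A:=\delta\mathcal{G}_h(\bar u_h)$ and observe that, since $\mathcal{G}_h\in C^1$, the bilinear form $(v_h,w_h)\mapsto\langle A v_h,w_h\rangle$ is bounded; together with the coercivity \eqref{inv_stab} this puts us in the Lax--Milgram setting, so $A$ is invertible with $\|A^{-1}\|_{\mathcal{L}(\mathcal{U}_h^*,\mathcal{U}_h)}\le\gamma^{-1}$. I would then introduce $\mathcal{F}(v_h):=v_h-A^{-1}\mathcal{G}_h(v_h)$, whose fixed points are exactly the zeros of $\mathcal{G}_h$.

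The core of the argument is to show that $\mathcal{F}$ is a contraction mapping the closed ball $\overline{B}:=\{v_h\in\mathcal{U}_h:\|\nabla v_h-\nabla\bar u_h\|_{L^2}\le 2\eta/\gamma\}$ into itself. For the contraction property I would write, for $v_h,w_h\in\overline{B}$, $\mathcal{F}(v_h)-\mathcal{F}(w_h)=A^{-1}\int_0^1\big(A-\delta\mathcal{G}_h(w_h+t(v_h-w_h))\big)(v_h-w_h)\,\dd t$, and bound the integrand by the Lipschitz estimate on $\delta\mathcal{G}_h$, using that the integration point stays within $2\eta/\gamma$ of $\bar u_h$; this produces the Lipschitz constant $2M\eta/\gamma^2<1$. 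For the self-mapping property I would specialise to $w_h=\bar u_h$ but this time retain the factor $t$ in the Taylor remainder, obtaining $\|\nabla(\mathcal{F}(v_h)-\mathcal{F}(\bar u_h))\|_{L^2}\le\tfrac{M}{2\gamma}\|\nabla v_h-\nabla\bar u_h\|_{L^2}^2$, and combine this with $\|\nabla(\mathcal{F}(\bar u_h)-\bar u_h)\|_{L^2}=\|\nabla A^{-1}\mathcal{G}_h(\bar u_h)\|_{L^2}\le\eta/\gamma$ coming from \eqref{inv_consist}; the triangle inequality then yields $\|\nabla(\mathcal{F}(v_h)-\bar u_h)\|_{L^2}\le\tfrac{\eta}{\gamma}\big(1+2M\eta/\gamma^2\big)\le 2\eta/\gamma$. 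Banach's theorem then delivers the (locally unique) fixed point $u_h\in\overline{B}$, i.e.\ a zero of $\mathcal{G}_h$ with $\|\nabla u_h-\nabla\bar u_h\|_{L^2}\le 2\eta/\gamma$.

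Finally, the perturbed stability bound at $u_h$ is immediate: for any $v_h\in\mathcal{U}_h$ I would estimate $\langle\delta\mathcal{G}_h(u_h)v_h,v_h\rangle\ge\langle\delta\mathcal{G}_h(\bar u_h)v_h,v_h\rangle-\|\delta\mathcal{G}_h(u_h)-\delta\mathcal{G}_h(\bar u_h)\|_{\mathcal{L}}\|\nabla v_h\|_{L^2}^2$ and insert \eqref{inv_stab}, the Lipschitz bound, and $\|\nabla u_h-\nabla\bar u_h\|_{L^2}\le 2\eta/\gamma$ to obtain the factor $(1-2M\eta/\gamma^2)\gamma$.

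Since the statement is a standard quantitative inverse function theorem, there is no genuine obstacle; the one point needing care is the bookkeeping of constants in the self-mapping step — keeping the factor $t$ in the remainder is precisely what allows the hypothesis to be the sharp $2M\eta/\gamma^2<1$ rather than a cruder $4M\eta/\gamma^2\le 1$ — and one should note that coercivity alone, without any symmetry of $\delta\mathcal{G}_h(\bar u_h)$, already suffices for its quantitative invertibility.
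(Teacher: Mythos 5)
Your proposal is correct: the quantitative Newton--Banach fixed-point argument, with Lax--Milgram (coercivity \eqref{inv_stab} plus boundedness, no symmetry needed) giving $\|\delta\mathcal{G}_h(\bar u_h)^{-1}\|\le\gamma^{-1}$, the retained factor $t$ in the Taylor remainder giving the self-mapping of the ball of radius $2\eta/\gamma$ under the sharp hypothesis $2M\eta/\gamma^2<1$, and the final perturbation of \eqref{inv_stab} by the Lipschitz bound, is precisely the standard proof. The paper itself does not reproduce a proof but cites it as standard (e.g.\ Lemma 2.2 of the 1D QNL reference), and that cited argument proceeds along essentially the same lines as yours.
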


To ensure Dirichlet boundary conditions, we adapt the approximation map defined in \cite{EhrOrtSha:2013}. Let $\mu\in C^1(\mathbb{R}^2)$ be a cut-off function such that
\begin{equation*}
\mu(x) = \left\{
\begin{array}{l l}
1 & 0 \le x \le \frac{1}{2},\\
0 & x \ge 1.
\end{array}
\right.
\end{equation*}
For $u : \L \rightarrow \R ^{m}$, define
\begin{equation}\label{def:cut-off}
\mathcal{L} u(x): = \mu\left(\frac{|x|}{N}\right)\left(\tilde{u}(x)-a_u  \right), \text{ where } a_u:= \frac{1}{|B_N \setminus B_{N/2}|} \int_{B_N \setminus B_{N/2}}\tilde{u}(y) \dy.
\end{equation}

Let $\nu_{T,i}, i = 1, 2, 3$ be the vertices of $T$ and $m_e$ be the mid-point
of an edge $e$. Then, the set of all {\em active} P2 finite element nodes
is given by
\begin{equation*}
  \mathcal{N}_h : = \{\nu_{T,i} \sep T \in \T_h, i = 1,2,3\} \cup \{ m_e \sep e  = T_1 \cap T_2, T_1, T_2 \in \T ^c_h\}.
\end{equation*}
This includes all P1 nodes as well as the P2 nodes (edge midpoints) associatd
with edges entirely in the P2 region.

Furthermore, let $I^2_h: C(\R^2; \R^m) \rightarrow \mathcal{U}_h$ be the
interpolation operator such that, for $g \in C(\R^2; \R^m)$,
$I^2_h(g)|_T \in \mathbb{P}^1(T)$ for
$ T\subset \mathcal{T}_h^a \cup \mathcal{T}_h^i $,
$ I^2_h(g)|_T \in \mathbb{P}^2(T) \text{ for } T\subset \mathcal{T}_h^c$,
and
\begin{displaymath}
I^2_h(g)(x) =  g(x) \qquad \text{ for all $x \in \mathcal{N}_h$}.
\end{displaymath}

\begin{remark}
  We also introduce ghost nodes on the edges shared by interface and continuum
  elements:
  \begin{equation}
    \mathcal{N}_h^g : = \{ m_e \sep e = T_1 \cap T_2, T_1 \in \T ^i_h , T_2 \in \T^c_h  \}.
  \end{equation}
  Then, for $x \in \mathcal{N}_h^g$,
  $I^2_h(g)(x) = (g(\nu_x^1) + g(\nu_x^2))/2$, where $\nu_x^1$ and $\nu_x^2$ are
  the vertices of the edge on which $x$ lies. Hence, the $P^1$ and $P^2$
  interpolants coincide on $\mathcal{N}_h^g$.
\end{remark}

We can now define the projection map (quasi-best approximation operator)
$\Pi_h:\mathcal{U}_0 \rightarrow \mathcal{U}_h$ as
\begin{equation}\label{def:Pi_h}
\Pi_h: = I^2_h \circ \mathcal{L}.
\end{equation}

\subsection{Stability}
To put Theorem \ref{theo:inverse} (Inverse Function Theorem) into our context,
let
\begin{equation*}
\mathcal{G}_h(v) : =  \del \E^{\rm g23}(v) -\del f(v) \quad \text{and}\quad \bar{u}_h := \Pi_h u^\a.
\end{equation*}
To make \eqref{inv_consist} and \eqref{inv_stab} concrete we will show that
there exist $\eta,\gamma >0$ such that, for all $\varphi_h \in \mathcal{U}_h$,
\begin{equation*}
\begin{aligned}
\langle \del \E^{\rm g23}(\Pi_h u^\a ),\varphi_h\rangle - \<\del f(\Pi_h u^\a), \varphi_h\>&\le \eta \|\D \varphi_h\|_{L^2}, \quad (consistency) \\
\langle \del^2 \E^{\rm g23}(\Pi_h u^\a ) \varphi_h, \varphi_h \rangle -\<\del^2 f(\Pi_h u^\a)\varphi_h, \varphi_h\> &\ge \gamma  \|\D \varphi_h\|^2_{L^2}. \quad (stability)
\end{aligned}
\end{equation*}
Ignoring some technical requirements, the inverse function theorem implies that, if $\eta / \gamma $ is sufficiently small, then there exists $u^{\rm g23}_h \in \mathcal{U}_h$ such that
\begin{equation*}
\begin{aligned}
&\langle \del \E^{\rm g23}(u_h^{\rm g23} ),\varphi_h\rangle - \< \del f(u_h^{\rm g23}), \varphi_h\>=0, \quad \forall \varphi_h \in \mathcal{U}_h, \quad \text{and}\\
&\|\D u^{\rm g23}_h - \D \Pi_h u^\a\|_{L^2}  \le 2 \frac{\eta}{\gamma}.
\end{aligned}
\end{equation*}
Finally adding the best approximation error $\|\D \Pi_h u^\a - \D u^\a\|_{L^2}$
gives the error estimate
\begin{displaymath}
  \|\D u^{\rm g23}_h - \D u^\a\|_{L^2} \leq
  \|\D \Pi_h u^\a - \D u^\a\|_{L^2} + 2 \frac{\eta}{\gamma}
\end{displaymath}

The Lipschitz and consistency estimates require assumptions on the boundedness
of partial derivatives of $V$. For $\bfg\in \R^{m\times 6}$, define the first
and second partial derivatives, for  $i,j = 1,\dots,6$, by
\begin{equation*}
\pp_jV(\bfg) := \frac{\pp V(\bfg)}{\pp g_j} \in \R^m, \quad \text{and} \quad \pp_{i,j}V(\bfg) : = \frac{\pp^2 V(\bfg)}{\pp g_i\pp g_j} \in \R^{m\times m},
\end{equation*}
and similarly for the third derivatives
$\pp_{i,j,k}V(\bfg)\in \R^{m\times m\times m}$. We assume that the second and
third derivatives are bounded
\begin{align}
M_2:& = \sum_{i,j=1}^6 \sup_{\bfg\in \R^{m\times 6}} \sup _{\substack{h_1,h_2 \in \R^2,\\|h_1| = |h_2| = 1}} \pp_{i,j} V(\bfg)[h_1,h_2] < \infty, \quad \text{and} \label{def:M_2}\\
M_3:& = \sum_{i,j,k=1}^6 \sup_{\bfg\in \R^{m\times 6}} \sup _{\substack{h_1,h_2,h_3 \in \R^2,\\ |h_1| = |h_2|=|h_3 |= 1}} \pp_{i,j,k} V(\bfg)[h_1,h_2,h_3] < \infty. \label{def:M_3}
\end{align}

With the above bounds it is easy to show that
\begin{equation}\label{eq:lip_V}
\sum_{i=1}^6|\pp_i V(\bfg) - \pp_i V(\bfh)| \le M_2 \max_{j = 1,\dotsc,6} |g_j-h_j|, \quad \text{for }\bfg,\bfh \in \R^{m\times 6}.
\end{equation}

From the bounds above we can obtain the following Lipschitz continuity and
stability results.

\begin{lemma}
  \label{th:Lip}
  There exists $M>0$ such that
  \begin{equation}\label{eq:lip}
    \|\delta \mathcal{G}_h(u_h)-\delta \mathcal{G}_h(v_h)\|_\mathcal{L} \le M \|\nabla u_h-\nabla v_h\|_{L^2} \quad \text{for all } u_h,v_h \in \mathcal{U}_h.
  \end{equation}
\end{lemma}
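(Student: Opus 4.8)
The plan is to start from $\mathcal{G}_h(v)=\delta\E^{\rm g23}(v)-\delta f(v)$, so that
$\delta\mathcal{G}_h(u_h)-\delta\mathcal{G}_h(v_h)=\big[\delta^2\E^{\rm g23}(u_h)-\delta^2\E^{\rm g23}(v_h)\big]-\big[\delta^2 f(u_h)-\delta^2 f(v_h)\big]$,
and to bound the $\mathcal{L}(\mathcal{U}_h,\mathcal{U}_h^*)$-norm of each bracket by testing against arbitrary $\varphi_h,\psi_h\in\mathcal{U}_h$. Decomposing $\E^{\rm g23}$ into its atomistic, interface and continuum contributions, the relevant bilinear forms are $\sum_{\ell\in\As}\partial^2 V(Du_h(\ell))\,D\varphi_h(\ell)\otimes D\psi_h(\ell)$, the analogous sum over $\ell\in\Is$ with $V,D$ replaced by $V^i_\ell,\mathcal{R}_\ell D$, and $\int_{\Omega^\c}\partial^2 W(\nabla u_h)\,\nabla\varphi_h\otimes\nabla\psi_h\dx$, together with the same expressions for $v_h$.

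First I would record the elementary consequence of \eqref{def:M_3}, in the spirit of \eqref{eq:lip_V}, that $|\partial^2 V(\bfg)-\partial^2 V(\bfh)|\lesssim M_3\max_k|g_k-h_k|$, together with $|\partial^3 W|\lesssim\Omega_0^{-1}M_3$ (which follows from $W(\mathsf F)=\Omega_0^{-1}V(\mathsf F\bfa)$ and $|a_j|=1$); since the reconstruction coefficients $C_{\ell,j,i}$ are bounded uniformly in $\ell$ (recall $\lambda_{\ell,j}\in\{2/3,1\}$), the chain rule gives the same Lipschitz bound for $\partial^2 V^i_\ell$ up to a geometric constant. This yields the pointwise estimates $|\partial^2 V(Du_h(\ell))-\partial^2 V(Dv_h(\ell))|\lesssim M_3\,|D(u_h-v_h)(\ell)|$ on $\As$, its analogue with $V^i_\ell$ on $\Is$ (the right-hand side now also involving the finitely many neighbouring stencil entries), and $|\partial^2 W(\nabla u_h)-\partial^2 W(\nabla v_h)|\lesssim M_3\,|\nabla(u_h-v_h)|$ on $\Omega^\c$. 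Summing over $\As\cup\Is$, extracting $\|D(u_h-v_h)\|_{\ell^\infty}$ and applying Cauchy--Schwarz leaves $\|D\varphi_h\|_{\ell^2(\As\cup\Is)}\|D\psi_h\|_{\ell^2(\As\cup\Is)}$; for the continuum term, H\"{o}lder's inequality leaves $\|\nabla(u_h-v_h)\|_{L^\infty(\Omega^\c)}\|\nabla\varphi_h\|_{L^2}\|\nabla\psi_h\|_{L^2}$.

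It then remains to convert everything to $\|\nabla\cdot\|_{L^2}$. In $\As\cup\Is$ every function in $\mathcal{U}_h$ is piecewise affine on the canonical triangulation $\T$, so $D_j w(\ell)=\nabla w\cdot a_j$ on an adjacent (fixed-area) triangle, whence $\|Dw\|_{\ell^2(\As\cup\Is)}\lesssim\|\nabla w\|_{L^2}$ and $\|Dw\|_{\ell^\infty}\lesssim\|\nabla w\|_{L^\infty}$. Thus it suffices to establish the global inverse estimate $\|\nabla w\|_{L^\infty(\R^2)}\lesssim\|\nabla w\|_{L^2(\R^2)}$ for $w\in\mathcal{U}_h$ with a constant independent of $K,N$; this holds because $\T_h$ is never finer than the atomistic lattice, so that $|T|\ge\sqrt{3}/4$ for every $T\in\T_h$, and the element-wise inverse inequality $\|\nabla w\|_{L^\infty(T)}\lesssim|T|^{-1/2}\|\nabla w\|_{L^2(T)}$ for the degree-$\le1$ polynomial $\nabla w|_T$ (its constant controlled by the shape-regularity constant of \eqref{eq:unif-shape-reg}) may then be taken over all $T$ uniformly. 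Combining the three contributions gives $\|\delta^2\E^{\rm g23}(u_h)-\delta^2\E^{\rm g23}(v_h)\|_\mathcal{L}\lesssim M_3\,\|\nabla(u_h-v_h)\|_{L^2}$. The $f$-term is handled by the same device: $f$ depends only on the finitely many nodal values $w(\ell)$ with $|\ell|<R_f$, which lie in the atomistic region, and differences of such values are controlled by $\|\nabla w\|_{L^\infty(B_{R_f+1})}\lesssim\|\nabla w\|_{L^2}$, so the assumed regularity of $f$ closes the estimate (and when $f$ is a dead load this term vanishes). Taking $M$ to be the sum of the resulting constants finishes the proof.

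The step I expect to be most delicate is not any single inequality but two bookkeeping points: treating the interface contribution, where one differentiates $V^i_\ell=V\circ\mathcal{R}_\ell$ and must keep constants uniform in $\ell\in\Is$ even though $\#\Is\to\infty$; and justifying the $L^2\to L^\infty$ passage needed for the genuinely trilinear expressions. The latter is legitimate with a $(K,N)$-independent constant precisely because the mesh is never refined below the lattice spacing; absent such a lower bound, the natural right-hand side of \eqref{eq:lip} would carry $\|\nabla(u_h-v_h)\|_{L^\infty}$ rather than $\|\nabla(u_h-v_h)\|_{L^2}$.
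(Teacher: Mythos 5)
Your argument is correct and follows essentially the same route as the paper, whose proof is a one-line appeal to the global bounds \eqref{def:M_2}--\eqref{def:M_3} on the derivatives of $V$ (hence of $W$ and $V^i_\ell$) together with the compact support and smoothness of $\delta f$. The extra detail you supply — uniformity of the reconstruction coefficients over $\Is$ and the $(K,N)$-independent inverse estimate $\|\nabla w\|_{L^\infty}\lesssim\|\nabla w\|_{L^2}$, valid because no element of $\T_h$ is finer than the lattice triangles — is exactly what makes the paper's ``follows directly'' rigorous, so no discrepancy in approach.
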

\begin{proof}
  The result follows directly from the global bounds of derivatives of $V$ and
  the fact that $f \in C^k(\dot{\mathcal{U}}^{1,2})$ and that $\del f$ is
  compactly supported hence $\del^2f$ is also Lipschitz.
\end{proof}

\begin{lemma}\label{lem:stab}
  Under the assumptions \textbf{(A1)} and \textbf{(A2)}, if
  $\mathcal{G}_h(v) : = \del \E^{\rm g23}(v) -\del f(v)$, then there exits $\gamma>0$ such that, when $K$ is sufficiently large,
  \begin{equation}\label{eq:stab}
    \langle \delta \mathcal{G}_h(\Pi_h u^\a)\varphi_h,\varphi_h\rangle \ge \gamma \|\nabla \varphi_h\|^2_{L^2} \quad
    \text{ for all }\varphi_h \in \mathcal{U}_h.
  \end{equation}
\end{lemma}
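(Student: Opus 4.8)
The plan is to reduce the stability of $\mathcal{G}_h$ at $\Pi_h u^\a$ to the assumed homogeneous stability \textbf{(A2)} by a perturbation argument, controlling the difference $\delta^2\E^{\rm g23}(\Pi_h u^\a) - \delta^2\E^{\rm g23}(0)$ via the Lipschitz bound from Lemma~\ref{th:Lip} together with the smallness of $\nabla\Pi_h u^\a$ in a suitable norm. First I would write, for any $\varphi_h \in \mathcal{U}_h$,
\begin{equation*}
  \langle \delta^2\E^{\rm g23}(\Pi_h u^\a)\varphi_h,\varphi_h\rangle
  = \langle \delta^2\E^{\rm g23}(0)\varphi_h,\varphi_h\rangle
  + \langle [\delta^2\E^{\rm g23}(\Pi_h u^\a) - \delta^2\E^{\rm g23}(0)]\varphi_h,\varphi_h\rangle,
\end{equation*}
and similarly split off the $\delta^2 f$ contribution. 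The first term is bounded below by $C_0^{\rm g23}\|\nabla\varphi_h\|_{L^2}^2$ by \textbf{(A2)}; it therefore remains to show the remaining terms are small relative to this.

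For the Hessian difference, the natural route is a pointwise/elementwise estimate rather than the global operator-norm Lipschitz bound (which involves $M\|\nabla\Pi_h u^\a\|_{L^2}$, and $\|\nabla\Pi_h u^\a\|_{L^2}$ need not be small). Instead I would exploit that $\delta^2\E^{\rm g23}$ is local: its kernel at a site $\ell$ (or element $T$) depends only on $D u$ near $\ell$, and the difference $\delta^2\E^{\rm g23}(w) - \delta^2\E^{\rm g23}(0)$ restricted to a neighbourhood of $\ell$ is controlled by $M_3 \sup_{\text{near }\ell}|Dw|$ (in the continuum region, by $M_3\sup|\nabla w|$ on the relevant patch of elements). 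Hence
\begin{equation*}
  \big|\langle [\delta^2\E^{\rm g23}(\Pi_h u^\a) - \delta^2\E^{\rm g23}(0)]\varphi_h,\varphi_h\rangle\big|
  \lesssim M_3 \,\|\nabla \widetilde{\Pi_h u^\a}\|_{L^\infty(\R^2)}\, \|\nabla\varphi_h\|_{L^2}^2,
\end{equation*}
using the smooth interpolant and its equivalence with finite differences. The key point is then that $\|\nabla\Pi_h u^\a\|_{L^\infty}$ \emph{is} small for $K$ large: away from the defect core this follows from the decay estimate $|D u^\a(\ell)|\lesssim|\ell|^{-1-1}$ of Corollary~\ref{thm:decay} (so $|\nabla u^\a|\lesssim |\cdot|^{-2}$, hence $\|\nabla u^\a\|_{L^\infty(\R^2\setminus\mathcal{B}_K)}\lesssim K^{-2}$), combined with a stability/interpolation bound showing $\Pi_h u^\a$ inherits this smallness — in particular the cut-off $\mathcal{L}$ and the interpolant $I_h^2$ do not amplify the $L^\infty$ norm of the gradient by more than a constant, and near the core $\Pi_h u^\a$ agrees with the P1 interpolant of $u^\a$, whose gradient is bounded (not small, but on a region of fixed size this is harmless only if we are careful). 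This last subtlety — that near the defect core $\nabla u^\a$ is merely bounded, not small — is where the argument really bites: one must either absorb the core contribution by noting \textbf{(A1)} already gives coercivity on that fixed bounded region (a compactness/locality argument: the core is a fixed finite set of atoms, and passing from $\E^\a$-stability to $\E^{\rm g23}$-stability there uses that the two energies coincide on the atomistic region), or argue that the perturbation supported on a $K$-independent core cannot destroy the uniform-in-$K$ coercivity constant.

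For the external potential term, $\delta^2 f(\Pi_h u^\a)$ is compactly supported (within $\mathcal{B}_{R_f}$) and bounded, so $|\langle \delta^2 f(\Pi_h u^\a)\varphi_h,\varphi_h\rangle| \lesssim \|\nabla\varphi_h\|_{L^2(\mathcal{B}_{R_f})}^2$; since $R_f \le K$ this is a perturbation on a fixed bounded set and is handled by the same core argument, or alternatively absorbed because \textbf{(A1)} is stated for $\E^\a$ which already includes no $f$ — one checks that the combined operator $\delta^2\E^\a(u^\a) - \delta^2 f(u^\a)$ inherited coercivity is what (A1) should be read against, or one simply notes $f$ contributes a relatively compact perturbation and uses a standard argument that relatively compact perturbations preserve coercivity up to shrinking the domain, which is legitimate here because the defect region has fixed size. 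Collecting the three estimates and choosing $K$ large enough that $C M_3 K^{-2} + (\text{core terms}) \le \tfrac12 C_0^{\rm g23}$ yields \eqref{eq:stab} with $\gamma = \tfrac12 C_0^{\rm g23} > 0$, independent of $K$.

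The main obstacle I anticipate is precisely the handling of the defect core: the global $L^\infty$ smallness of $\nabla\Pi_h u^\a$ fails there, so the clean perturbation-from-$\delta^2\E^{\rm g23}(0)$ argument does not directly apply on $\mathcal{B}_K$. The right fix is a two-region argument — perturb from the homogeneous Hessian on $\R^2\setminus\mathcal{B}_K$ where decay gives smallness, and on $\mathcal{B}_K$ use that $\E^{\rm g23}$ coincides with $\E^\a$ (no Cauchy--Born, no reconstruction, since $K\ge R_f$ guarantees the defect and a buffer are fully atomistic) so that \textbf{(A1)}'s coercivity, transported along the small perturbation $\Pi_h u^\a - u^\a$ (small by the best-approximation estimate hidden in $\Pi_h$), survives — with the cross terms between the two regions absorbed using a partition of unity and the $H^1$-$L^2$ interpolation (the overlap region is an annulus of width $\sim K$ where $\nabla u^\a$ is already $O(K^{-2})$, so those cross terms are lower order). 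Making this gluing rigorous, including verifying that $\E^{\rm g23}$ and $\E^\a$ genuinely agree on a neighbourhood of the core and that the constant $\gamma$ comes out $K$-independent, is the technical heart of the proof.
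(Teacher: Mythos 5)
The paper itself does not carry out this proof: Lemma~\ref{lem:stab} is dispatched in one line as a ``straightforward adaption'' of \cite[Lemma 4.9]{2014-bqce}, the analogous stability result for blending-type couplings. Your outline --- perturb from $\delta^2\E^{\rm g23}(0)$ in the far field, where Corollary~\ref{thm:decay} gives $|\nabla\tilde u^\a|\lesssim|x|^{-2}$ and \textbf{(A2)} supplies the coercive reference operator, use \textbf{(A1)} near the core where $\E^{\rm g23}$ coincides with $\E^\a$ and $D\Pi_h u^\a$ with $Du^\a$ (up to an irrelevant constant shift), and glue the two regions with a cut-off whose overlap annulus has width $\sim K$ --- is precisely the structure of the argument behind that citation, so in spirit you have reconstructed the proof the paper relies on rather than taken a different route.

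Two concrete caveats. First, the fallback principle you invoke, that ``relatively compact perturbations preserve coercivity up to shrinking the domain,'' is false in general: a negative quadratic form supported on a fixed bounded set can destroy a coercivity constant regardless of how small its support is. The core region and the $-\delta^2 f$ contribution therefore cannot be dismissed this way; they must be absorbed through \textbf{(A1)} (and note that \textbf{(A1)} as stated concerns $\delta^2\E^\a(u^\a)$ alone, so the $\delta^2 f$ term is a point the paper itself glosses over --- harmless when $f$ is linear, as in the numerical tests, but not automatic in general). Second, the gluing is genuinely the technical heart and you defer it: the cross terms produced by the cut-off involve $\|\varphi_h\|_{L^2}$ on the overlap annulus (not merely $\nabla u^\a$ there), so one needs a Poincar\'e-type estimate on the width-$K$ annulus, after adjusting by constants which the Hessians annihilate, to convert them into $o(1)\,\|\nabla\varphi_h\|_{L^2}^2$ as $K\to\infty$ and to check that the resulting $\gamma$ is $K$-independent. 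As it stands your proposal is an accurate outline of the right argument, with the decisive estimates asserted rather than executed.
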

\begin{proof}
  The proof of this result is a straightforward adaption of the proof of
  \cite[Lemma 4.9]{2014-bqce}, which is an analogous result for blending-type
  a/c coupling.
\end{proof}

\section{Consistency estimate with a P2-FEM}\label{sect:consist}

\subsection{Outline of the consistency estimate}
We begin by decomposing the  consistency error into
\begin{align}
  \notag
\langle \del \E^{\rm g23}(\Pi_h u^\a ),\varphi_h\rangle - \<\del f(\Pi_h u^\a), \varphi_h\> &= \left\{\langle \del \E^{\rm g23}(\Pi_h u^\a ),\varphi_h\rangle - \<\del \E^{\a}( u^\a ),\varphi\rangle \right\} \\
\notag
& \quad + \left\{ \< \del f(\Pi_h u^\a ), \varphi_h\> - \<\del f(u^\a), \varphi\> \right\} \\
\label{eq:eta-int+eta-ext}
& =: \eta_{\rm int} + \eta_{\rm ext},
\end{align}
where $\varphi_h \in \mathcal{U}_h$ is given and we can choose
$\varphi \in \mathcal{U}_0$ arbitrarily.

For $\varphi_h \in \mathcal{U}_h$, $\varphi_h|_T \in \mathbb{P}^2(T)$ for
$T \in \mathcal{T}_h^c$. But the test function $\varphi$ in
$\langle \delta\mathcal{E}^\a (u^\a), \varphi\rangle$ is a piecewise linear
lattice function. While we postpone the construction of $\varphi$, we will
ensure that it is defined in such a way that $\varphi(\ell) = \varphi_h(\ell)$
for all $\ell \in \As \cup \Is \cup \Is ^+$, where $\Is^+$ is an extra layer of
atomistic sites outside $\Is$. With this assumption in place, we can further
decompose $\eta_{\rm int}$ into the following parts,
\begin{equation}\label{eq:decomp}
\begin{aligned}
\eta_{\rm int}
& =\int_{\Omega^\c} \partial_\mathsf{F} W(\nabla \tilde{u}^\a):(\nabla \varphi_h-\nabla \varphi)\\
& \quad +\int_{\Omega^\c} (\partial_\mF W(\D \Pi_h u^\a) - \partial_\mathsf{F} W(\nabla \tilde{u}^\a)):\nabla \varphi_h\\
& \quad
 + \int_{\Omega^\c} \big[\partial_{\sf F} W(\nabla \tilde{u}^\a) - \partial_{\sf F} W(\nabla {u}^\a)\big]: \nabla \varphi \\[1mm]
& \quad +
\< \delta \E ^{\rm g23}(u^\a) -  \delta \E ^\a (u^\a), \varphi \> \\
&=: \delta_1+\delta_2+\delta_3+\delta_4,
\end{aligned}
\end{equation}
where $\tilde{u}^\a$ is the smooth interpolant of $u^\a$ defined in Lemma
\ref{lem:smooth_int} below.  By $\nabla \varphi$ in $\delta_1$ we mean the gradient of
the canonical linear interpolant of $\varphi$. To estimate $\delta_2$ we require
an approximation error estimate for $\Pi_h u - u$. To estimate
  $\delta_3$ we will exploit the fact that the atomistic triangulation
  $\mathcal{T}$ is uniform to prove a super-convergence estimate.  Finally, for the
modelling error, $\delta_4$, we employ the techniques developed in
\cite{PRE-ac.2dcorners}.

To define the smooth interpolant $\tilde{u}^\a $, we use the construction from \cite{2014-bqce}, namely a
$C^{2,1}$-conforming multi-quintic interpolant.
Although the interpolant defined
in \cite{2014-bqce} is for lattice functions on $\Z ^2$, we can use the linear
transformation from $\Z ^2$ to $\L = \mA \Z ^2$ to obtain a modified
interpolant.

\begin{lemma}\label{lem:smooth_int}
	(a) For each $u :\L \rightarrow \R ^m$, there exists a unique $\tilu \in C^{2,1}(\R^2; \R^m)$ such that, for all $\ell \in \L$,
	\begin{equation*}
	\begin{aligned}
	\left.\tilu\right|_{\ell + \mA(0,1)^2} &\text{ is a polynomial of degree 5}, \\
	\tilu (\ell) &= u(\ell),\\
	\partial_{a_i} \tilu (\ell) &= \tfrac{1}{2}\left(u(\ell + a_i) - u(\ell - a_i)\right),\\
	\partial_{a_i}^2 \tilu (\ell)& = u(\ell+a_i) - 2u(\ell) +u(\ell- a_i),
	\end{aligned}
	\end{equation*}
	where $i \in \{1,2\}$ and $\partial_{a_i}$ is the derivative in the
        direction of $a_i$.

	(b) Moreover, for $q \in [1,\infty]$, $0\le j \le 3$,
	\begin{equation}\label{eq: int_ineq_proof}
          \|\D^j \tilu \|_{L^q(\ell+\mA(1,0)^2)} \lesssim \|D^j u \|_{\ell^q\left(\ell + \mA\{-1,0,1,2\}^2 \right)} \quad \text{and } \quad  |D^j u(\ell)| \lesssim\|\D^j \tilu \|_{L^1(\ell+\mA(-1,1)^2)},
	\end{equation}
	where $D$ is the difference operator defined in \eqref{def:diff_op}.  In
        particular,
	\begin{equation*}\|\D \tilde{u}\|_{L^q} \lesssim \|\D u\|_{L^q} \lesssim \|\D \tilde{u}\|_{L^q},
	\end{equation*}
	where $ u $ is identified with its piecewise affine interpolant.
\end{lemma}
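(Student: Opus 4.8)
The plan is to construct $\tilu$ by transporting the square-lattice interpolant of \cite{2014-bqce} through the linear isomorphism $x \mapsto \mA x$, which carries $\Z^2$ onto $\L$ and the coordinate vectors $e_1, e_2$ onto the lattice directions $a_1 = \mA e_1$, $a_2 = \mA e_2$. The change of variables will convert the finite-difference conditions and the local estimates into precisely those stated, at the cost of constants depending only on the fixed matrix $\mA$.

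For part (a), I would first recall the $\Z^2$-version from \cite{2014-bqce}: for $v : \Z^2 \to \R^m$ there is a unique $\tilde v \in C^{2,1}(\R^2; \R^m)$ that is a polynomial of degree $\le 5$ in each coordinate on every unit cell $k + [0,1]^2$ and matches, at each $k \in \Z^2$, the value $v(k)$, the symmetric first differences $\tfrac12(v(k+e_i) - v(k-e_i))$, and the symmetric second differences $v(k+e_i) - 2v(k) + v(k-e_i)$ for $i = 1,2$. This $\tilde v$ is the tensor-product quintic Hermite interpolant of the nodal data obtained by applying the $1$D first- and second-difference stencils coordinate-wise (nine values per node, including the mixed differences), so that on each cell the polynomial is pinned down by Hermite data at the four corners. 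Its global $C^{2,1}$-regularity follows because along any shared cell edge the traces of $\tilde v$ and of its first two normal derivatives are each a $1$D quintic Hermite interpolant of data carried only by the two (shared) endpoints of that edge, so all partial derivatives of order $\le 2$ agree across the edge; a globally $C^2$ piecewise polynomial then lies in $C^{2,1}$. I would then put $v(k) := u(\mA k)$, set $\tilu := \tilde v \circ \mA^{-1}$, and check the four listed properties by the chain rule: composition with the fixed linear map $\mA^{-1}$ preserves $C^{2,1}$-regularity, sends the cell $\ell + \mA(0,1)^2$ (with $\ell = \mA k$) onto $k + (0,1)^2$ and hence keeps $\tilu$ of degree $\le 5$ in each coordinate there, while $\mA^{-1}a_i = e_i$ gives $\partial_{a_i}\tilu(\ell) = \partial_i\tilde v(k)$ and $\partial_{a_i}^2\tilu(\ell) = \partial_i^2\tilde v(k)$, which by construction are the asserted symmetric differences of $u$. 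Uniqueness transfers for free, since $u \mapsto v \mapsto \tilde v \mapsto \tilu$ is a composition of bijections onto the relevant spaces.

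For part (b), the two inequalities in \eqref{eq: int_ineq_proof} are the images under $x \mapsto \mA x$ of the corresponding local estimates of \cite{2014-bqce}, the affine map again changing only the constants. The underlying mechanism is standard: on one cell $\tilde v$ is a fixed linear function of the finitely many nearby nodal values $v|_{k + \{-1,0,1,2\}^2}$ — the stencil support, which is exactly why $\ell + \mA\{-1,0,1,2\}^2$ appears on the right-hand side of the first inequality — so by equivalence of norms on that finite-dimensional space together with translation invariance, $\|\D^j\tilde v\|_{L^q(\text{cell})}$ is controlled by the $\ell^q$-norm of those values. Since the quintic Hermite interpolant reproduces polynomials of degree $\le 5$, and in particular of degree $\le j-1$ for $j \le 3$, one may subtract an arbitrary $p \in \mathbb{P}_{j-1}$ before invoking this bound; optimising over $p$ converts its right-hand side into a $j$-th order difference through a discrete Bramble--Hilbert inequality, and summing $q$-th powers over the cells (which overlap boundedly) yields the global form. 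The reverse estimate $|D^j u(\ell)| \lesssim \|\D^j\tilu\|_{L^1(\omega_\ell)}$ runs the same way with the roles of the nodal data and the interpolant interchanged, using a continuous Poincar\'e-type bound on the fixed patch; and the two-sided equivalence $\|\D\tilu\|_{L^q} \lesssim \|\D u\|_{L^q} \lesssim \|\D\tilu\|_{L^q}$ is the case $j = 1$, combined with the elementary comparison $\|\D u\|_{L^q}^q \approx \sum_\ell |Du(\ell)|^q$ for the piecewise affine interpolant and the bounded overlap of the $\omega_\ell$.

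Since the substance of the statement is already contained in \cite{2014-bqce}, I do not expect a genuinely hard step; the one place that needs care is arranging the nodal data on $\Z^2$ — in particular the mixed second (and higher) differences that the statement does not display — so that the cellwise quintic pieces glue to a globally $C^{2,1}$ function while still realising the displayed first- and second-difference conditions, and then tracking the affine transformation carefully enough that the stencil-sized neighbourhoods emerge as $\ell + \mA\{-1,0,1,2\}^2$ and $\omega_\ell = \ell + \mA(-1,1)^2$.
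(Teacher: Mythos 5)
Your proposal is correct and follows essentially the same route as the paper: both transfer the square-lattice $C^{2,1}$ quintic interpolant of \cite{2014-bqce} through the linear map $\mA$, obtaining part (a) by the chain rule and part (b) by changing variables in the local stencil estimates, with constants depending only on $\mA$. The only bookkeeping the paper spells out that you gloss over is that the transported $\Z^2$ stencil yields only the differences in the directions $a_1,a_2,a_4,a_5$, so the remaining terms $D_3,D_6$ are added to the right-hand side (which only enlarges it) to express the bound in terms of the full six-direction operator $D$.
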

\begin{proof} Let $v:\Z^2 \rightarrow \R ^m$ and $v(\xi) := u(\mA \xi)$ for all
  $\xi \in \Z$. Then \cite[Lemma 1]{2014-bqce} shows that there exists a unique
  $\tilv \in C^{2,1}(\R^2; \R^m)$ such that, for $\xi \in \Z^2$,
  \begin{equation*}
   \begin{aligned}
      \left.\tilv\right|_{\xi + (0,1)^2} &\text{ is a polynomial of degree 5}, \\
      \tilv (\xi) &= v(\xi),\\
      \partial_{e_i} \tilde{v}(\xi) &= \tfrac{1}{2}\left(v(\xi + e_i) - v(\xi - e_i)\right), \\
      \partial_{e_i}^2 \tilde{v}(\xi) &= v(\xi+e_i) - 2v(\xi) +v(\xi- e_i) \quad i = 1, 2,
    \end{aligned}
  \end{equation*}
  Defining $\tilu(x) := \tilv(\mA^{-1}x)$ for all $x\in \R^2$ proves part(a).

  For part (b), \cite[Lemma 1]{2014-bqce} establishes also that there exists a
  constant $C'_{j}$ such that, for $\xi \in \Z^2$, $1\le j \le3$, $q\in [1,\infty]$,
\begin{equation*}
\|\D^j \tilv \|_{L^q(\xi+(1,0)^2)} \le C'_{j}\|\hat{D}^j v \|_{\ell^q\left(\xi + \{-1,0,1,2\}^2 \right)},
\end{equation*}
where $\hat{D}$ represents the 4-stencil difference operator in $\Z^2$: let $\mathcal{R} := \{ \rho \in \Z^2 \,|\, |\rho| = 1\}$, then $\hat{D}v(\ell): = (\hat{D}_\rho v(\ell))_{\rho\in \mathcal{R}}$ with $\hat{D}_\rho v(\ell):= v(\ell+\rho)-v(\ell)$. After transformation, we have, for $\xi = \mA \ell \in \L$,
\begin{equation*}
\hat{D}v(\ell) = (D_i u(\xi))_{i = 1,2,4,5}.
\end{equation*}
By adding the additional stencil elements $D_3, D_6$ we obtain
\begin{equation*}
\begin{aligned}
C''_{j}\|\D^j \tilu \|_{L^q(\xi+\mA(1,0)^2)} &\le \|\D^j \tilv \|_{L^q(\ell+(1,0)^2)} \\
&\le C'_{{j}}\|\hat{D}^j v \|_{\ell^q\left(\ell + \{-1,0,1,2\}^2 \right)}\le C'''_{j}\|D^j u \|_{\ell^q\left(\xi + \mA\{-1,0,1,2\}^2 \right)},
\end{aligned}
\end{equation*}
where $C''_{j}$ and $C'''_{j}$ only depend on $j$. Writing
$C: = \max_{1\le j\le 3}\left(\frac{C'''_{j}}{C''_{j}}\right)$ yields the first
inequality of \eqref{eq: int_ineq_proof}. Following a similar argument the second inequality also holds.
\end{proof}

\subsection{Construction of $\varphi$ and estimation of $\delta_1$}

\label{sec:estimate-delta1}
Recall that
\begin{equation*}
\delta_1 := \int_{\Omega^\c} \partial_\mathsf{F} W(\nabla \tilde{u}^\a):(\nabla \varphi_h-\nabla \varphi).
\end{equation*}

We adapt the modified quasi-interpolation operator introduced in
\cite{carstensen} to approximate a test function $\varphi_h \in \mathcal{U}_h$.
The advantage of this interpolation operator is that by using the setting of a
partition of unity the approximation error has a local average
zero. Consequently we can apply Poincar\'{e} inequality on patches to obtain
local estimates.

We think of the construction of $\varphi$ as a Dirichlet boundary problem with
the outer boundary $\partial \Omega_h$ and the inner boundary
$\partial \Omega^\c$.  Let $\phi_\ell$ be the piecewise linear hat-functions on
the canonical triangulation $\T $ associated with $\ell \in \L $. Define
\begin{equation*}
\phi^{\rm PU}_\ell := \frac{\phi_\ell}{\sum_{k\in \Cs \cap \Omega_h} \phi_k } , \quad \forall \ell \in \Cs,
\end{equation*}
where $\Cs$ is the continuum lattice sites as defined in \S \ref{sec:g23model}.
It is clear that $\{ \phi^{\rm PU}_\ell\}_{\ell \in \Cs \cap \Omega_h}$ is a
partition of unity.

Now we refer to \cite{carstensen} for the contruction of a linear interpolant of $\varphi_h \in \mathcal{U}_h$ . We shall define the interpolant as follows:
\begin{equation}\label{def: varphi_intp}
\Pi_h^* \varphi_h(x) :=  \varphi(x) := \varphi_1(x)+\varphi_2(x), \quad \forall x \in \R ^2,
\end{equation}
where 
\begin{equation*}
\begin{aligned}
\varphi_1(\ell) &:= \cases{
\varphi_h(\ell),  & \ell \in \As \cup \Is \cup \Is^+, \\
\frac{\int_{\R ^2}\phi_\ell \varphi_h }{\int_{\R ^2 }\phi_\ell}, & \ell \in \Cs \setminus \Is ^+,} \\
\varphi_1(x) &:= \sum_{\ell \in \Lambda} \varphi_1(\ell) \phi_\ell(x), \quad \forall x \in \R ^2,\\
\varphi_2(\ell) &:= \cases{
\frac{\int_{\R ^2 }(\varphi_h-\varphi_1) \phi_\ell^{\rm PU}}{\int_{\R ^2} \phi_\ell}, & \ell \in \Cs \setminus \Is^+,\\
0, & \ell \in \As \cup \Is \cup \Is ^+,}\\
\varphi_2(x) &:= \sum_{\ell \in \L} \varphi_2(\ell) \phi_\ell(x), \quad \forall x \in \R ^2.
\end{aligned}
\end{equation*}

Observe that $\varphi_h$ and $\varphi$ both are supported on a finite domain,
hence we can use Theorem 3.1 in \cite{carstensen} to conclude that
	\begin{equation*}
	\|\D \varphi\|_{L^2(\R ^2)} \lesssim \|\D \varphi_h\|_{L^2(\R ^2)}, \quad \forall \varphi_h \in \mathcal{U}_h .
	\end{equation*}

Let $g : = -{\rm div}\, [\pp _{\mF}W(\D \tilde{u}^{\rm a})]$. Then
\begin{equation*}
  \del_1
  = \int_{\Omega^\c} g \cdot (\varphi_h-\varphi) \dx = \int_{\Omega^\c} g \cdot \left((\varphi_h-\varphi_1)- \varphi_2\right) \dx
\end{equation*}
Since $\varphi_2$ is a piecewise-linear quasi-interpolant of $\varphi_h - \varphi_1$ as defined in \cite{carstensen}, a direct consequence of Theorem 3.1 in \cite{carstensen} is that there exists $C>0$ such that, recalling $\Omega_h^\a : = \bigcup \mathcal{T}_h^\a$,
\begin{equation*}
\del_1 \le C\|\D (\varphi_h-\varphi_1) \|_{L^2(\R ^2 \setminus \Omega_h^\a)}\left( \sum_{\ell \in \Cs \cap \Omega_h} d_\ell ^2 \int_{w_\ell} \phi^{\rm PU}_\ell |g - \<g\>_\ell|^2 \dx\right)^{1/2},
\end{equation*}
where $w_\ell : = \supp (\phi_\ell)$,
$\<g \>_\ell := 1/|w_\ell| \int_{w_\ell} g(x)\dx$ and
$d_\ell :={\rm diam}(w_\ell) = 1$. With the sharp Poincar\'{e} constant derived
by \cite{Acosta2003} , we have
\begin{equation*}
\int_{w_\ell } \phi^{\rm PU}_\ell |g - \<g\>_\ell|^2 \dx \le \int_{w_\ell} |g-\<g\>_\ell|^2 \dx\le \tfrac{1}{4} d^2_\ell \|\D g\|^2_{L^2(w_\ell)}.
\end{equation*}
On the other hand, $\varphi_1$ is a standard quasi-interpolant of $\varphi_h$ in $ \bigcup{\mathcal{T}_h^c}$, which implies that there exists $C' >0$ such that
\begin{equation}
  \label{eq:stab_Pi*}
  \|\D (\varphi_h - \varphi_1)\|_{L^2(\R ^2 \setminus \Omega_h^\a)}  \le C' \|\D \varphi_h\|_{L^2(\R ^2 \setminus \Omega_h^\a)}.
\end{equation}

Due to the fact that $d_\ell = 1$ and that each point in
$\R^2 \setminus \Omega^\a_h$ is covered by at most three $w_\ell$, we have
\begin{align}
\notag
  \del_1 &\le C \max_\ell d_\ell^2 \|\D g\|_{L^2(\R ^2 \setminus \Omega_h^\a)}\|\D \varphi_h\|_{L^2(\R ^2 \setminus \Omega_h^\a)}  \\
  \label{eq:final-estimate-delta1}
  &\le C\left(M_2\|\D^3 \tilde{u}^\a\|_{L^2(\R ^2 \setminus \Omega_h^\a)}+M_3\|\D^2 \tilde{u}^\a \|^2_{L^4(\R ^2 \setminus \Omega_h^\a)}\right)\|\D \varphi_h\|_{L^2(\R ^2 \setminus \Omega_h^\a)},
\end{align}
where we used the following estimate, for some $c>0$,
\begin{equation*}
\begin{aligned}
\|\D g  \|_{L^2(\Omega_h)} &= \|\D {\rm div} [\pp _\mF W (\D \tilde{u}^\a)]\|_{L^2(\R ^2 \setminus \Omega_h^\a)}  \\
&= \|\D \left(\pp_\mF ^2 W (\D \tilde{u}^\a) \D ^2 \tilde{u}^\a \right)\|_{L^2(\R ^2 \setminus \Omega_h^\a)} \\
&= \left\| \pp_\mF ^2 W (\D \tilde{u}^\a) \D ^3 \tilde{u}^\a +  \pp_\mF ^3 W (\D \tilde{u}^\a) \left(\D ^2 \tilde{u}^\a \right)^2\right\|_{L^2(\R ^2 \setminus \Omega_h^\a)}  \\
&\le c \left(M_2\|\D^3 \tilde{u}^\a\|_{L^2(\R ^2 \setminus \Omega_h^\a)}+M_3\|\D^2 \tilde{u}^\a \|^2_{L^4(\R ^2 \setminus \Omega_h^\a)}\right),
\end{aligned}
\end{equation*}
employing the global bounds \eqref{def:M_2} and \eqref{def:M_3}.
This completes the estimate for $\delta_1$.

\subsection{Estimation of $\delta_2$}
Recall that
\begin{equation*}
\delta_2 : = \int_{\Omega^\c} (\partial_\mF W(\D \Pi_h u^\a) - \partial_\mathsf{F} W(\nabla \tilde{u}^\a)):\nabla \varphi_h.
\end{equation*}

We start with estimating the best approximation error.

\begin{lemma}\label{lem:ba}
  Let $T\in \mathcal{T}^c_h$, $u \in \dot{\mathcal{U}}^{1,2}$ and
  $v \in W^{3,2}(\R ^2)$. Then we have the following estimates.
\begin{enumerate}
	\item[(a)] Denote $h_T: = {\rm diam}(T)$, then
	\begin{equation*}
	\|\nabla v- \nabla I^2_h v\|_{L^2(T)}\lesssim h^2_T\|\nabla^{3} v\|_{L^2(T)}.
	\end{equation*}
      \item[(b)] There exists a constant $C>0$ such that, for any domain
        $S \supset \mathcal{B}_{N}$,
	\begin{equation*}
	\|\nabla \mathcal{L}u - \nabla \tilde{u}\|_{L^2(S)} \leq C \|\nabla \tilde{u}\|_{L^2\left(S\setminus \mathcal{B}_{N/2}\right)} ,
	\end{equation*}
	where $\mathcal{L}$ is the cut-off function defined by \eqref{def:cut-off}.
      \item[(c)] Furthermore, we have the best approximation error
        estimate \begin{equation} \label{eq:best_approx}
        \begin{aligned}
          \|\nabla \Pi_h u - \nabla
          \tilde{u}\|_{L^2(\Omega^\c )} \lesssim &\|h^2 \nabla^{3} \tilde{u}^\a \|_{L^2(\Omega^\c _h)} +\|\nabla \tilde{u}^\a \|_{L^2\left(\R ^2 \setminus \mathcal{B}_{N/2}\right)}\\
          & +N^{-1} \| h^2\nabla^2 \tilu \|_{L^2(\mathcal{B}_{N} \setminus
              \mathcal{B}_{N/2})},
	        \end{aligned}
	\end{equation}
	where $h(x) := {\rm diam}(T)$ with $x \in T$.
	\end{enumerate}
\end{lemma}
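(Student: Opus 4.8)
The plan is to prove the three estimates in order, each building on the previous one.

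For part (a), this is the standard $\mathbb{P}^2$ finite element interpolation estimate, obtained by the usual scaling argument. I would map $T$ to a fixed reference triangle $\hat T$ by the affine diffeomorphism associated with $T$; the transformed interpolation operator is the Lagrange interpolant at the three vertices and three edge midpoints of $\hat T$, which reproduces $\mathbb{P}^2(\hat T)$. Writing $\hat v - \hat I \hat v = (\id - \hat I)(\hat v - p)$ for arbitrary $p \in \mathbb{P}^2$, using boundedness of $\id - \hat I$ from $W^{3,2}(\hat T)$ (which embeds in $C(\hat T)$ in 2D) into $W^{1,2}(\hat T)$, and then the Bramble--Hilbert lemma, gives $\|\hat\nabla(\hat v - \hat I \hat v)\|_{L^2(\hat T)} \lesssim \|\hat\nabla^3 \hat v\|_{L^2(\hat T)}$; scaling back to $T$ produces the factor $h_T^2$, with all constants uniform thanks to the shape-regularity assumption \eqref{eq:unif-shape-reg}. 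The one point requiring care is that on continuum elements adjacent to the a/c interface $I^2_h$ replaces the true edge-midpoint value on the shared edge by the average of the two endpoint values, so it reproduces only $\mathbb{P}^1$ there; however such elements have $h_T \simeq 1$ since $\T_h$ coincides with the atomistic triangulation near the interface, so the resulting $\mathcal{O}(h_T \|\nabla^2 v\|_{L^2(T)})$ term is harmless.

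For part (b), I would differentiate $\mathcal{L}u(x) = \mu(|x|/N)(\tilde u(x) - a_u)$ directly, obtaining $\nabla \mathcal{L}u - \nabla\tilde u = \big(\mu(|x|/N) - 1\big)\nabla\tilde u(x) + N^{-1}\mu'(|x|/N)\tfrac{x}{|x|}(\tilde u(x) - a_u)$. This vanishes on $\mathcal{B}_{N/2}$ (where $\mu \equiv 1$ and $\mu' \equiv 0$) and equals $-\nabla\tilde u$ on $\R^2 \setminus \mathcal{B}_N$. On the annulus $\mathcal{B}_N \setminus \mathcal{B}_{N/2}$ the first term is pointwise bounded by $|\nabla\tilde u|$, while the second is bounded by $N^{-1}\|\mu'\|_{L^\infty} |\tilde u - a_u|$; since $a_u$ is by construction the average of $\tilde u$ over that annulus, the Poincar\'e inequality on $\mathcal{B}_N \setminus \mathcal{B}_{N/2}$ — whose constant scales like $N$ under dilation from the unit annulus — yields $\|\tilde u - a_u\|_{L^2(\mathcal{B}_N\setminus\mathcal{B}_{N/2})} \lesssim N \|\nabla\tilde u\|_{L^2(\mathcal{B}_N\setminus\mathcal{B}_{N/2})}$. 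Summing the contributions of $\mathcal{B}_{N/2}$, of the annulus, and of $S \setminus \mathcal{B}_N$ (noting $S \supset \mathcal{B}_N$) gives the claimed bound.

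For part (c), I would split $\nabla\Pi_h u - \nabla\tilde u = (\nabla I^2_h \mathcal{L}u - \nabla\mathcal{L}u) + (\nabla\mathcal{L}u - \nabla\tilde u)$ and apply (b) with $S = \R^2$ to the second bracket, producing the term $\|\nabla\tilde u^\a\|_{L^2(\R^2\setminus\mathcal{B}_{N/2})}$. For the first bracket I would sum the element estimate (a) over $T \in \mathcal{T}^\c_h$ to get $\|h^2\nabla^3\mathcal{L}u\|_{L^2(\Omega^\c_h)}$, then expand $\nabla^3\mathcal{L}u$ by the Leibniz and chain rules: on $\mathcal{B}_{N/2}$ one has $\mathcal{L}u = \tilde u - a_u$, so $\nabla^3\mathcal{L}u = \nabla^3\tilde u$, which gives the term $\|h^2\nabla^3\tilde u^\a\|_{L^2(\Omega^\c_h)}$; on the annulus the extra terms carry factors $N^{-1}\mu'$, $N^{-2}\mu''$, $N^{-3}\mu'''$ multiplying $\nabla^2\tilde u$, $\nabla\tilde u$, $(\tilde u - a_u)$ respectively. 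The first of these is exactly $N^{-1}\|h^2\nabla^2\tilu\|_{L^2(\mathcal{B}_N\setminus\mathcal{B}_{N/2})}$, and the last two are absorbed into $\|\nabla\tilde u^\a\|_{L^2(\R^2\setminus\mathcal{B}_{N/2})}$ using $h \lesssim N$ on the cut-off layer together with the same scaled Poincar\'e inequality; this step tacitly requires $\mu$ to be smooth enough (at least $C^{2,1}$) for $\nabla^3\mathcal{L}u \in L^2$, which may be assumed without loss of generality. I expect the main obstacle to be precisely this bookkeeping in part (c): correctly enumerating all the terms generated by $\nabla^3$ of the product $\mu(\cdot/N)(\tilde u - a_u)$ in the cut-off layer and checking that each is controlled by one of the three quantities on the right-hand side of \eqref{eq:best_approx}, with the right powers of $N$. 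Parts (a) and (b) are individually routine.
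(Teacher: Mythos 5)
Your proposal is correct and follows essentially the same route as the paper: part (a) via scaling and Bramble--Hilbert, part (b) by differentiating the cut-off and applying a Poincar\'e inequality (with constant scaling like $N$) on the annulus where $\mu'$ is supported, and part (c) by the triangle inequality combining (a) and (b) with a Leibniz expansion of $\nabla^3\big(\mu(\cdot/N)(\tilde u - a_u)\big)$, keeping the $n=0,1$ terms and absorbing the $n=2,3$ terms using $h\lesssim N$ and Poincar\'e. Your extra remarks --- the ghost-node/interface-adjacent elements in (a) and the smoothness of $\mu$ needed for $\nabla^3\mathcal{L}u\in L^2$ --- address points the paper passes over silently and do not change the argument.
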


\begin{proof}
  Recall the uniform shape regularity assumption \eqref{eq:unif-shape-reg}.

  Part (a) follows directly from the Bramble--Hilbert Lemma.

  For Part (b), we use a variation of \cite{2012-ARXIV-ellRd} Theorem
  2.1. Applying Poincar\'{e}'s inequality gives
  \begin{equation*}
    \begin{aligned}
      \|\nabla \mathcal{L}(u)-\nabla \tilde{u}\|_{L^2(S )}
      &=
      \big\| N^{-1}\mu'(\tilde{u}-a)
      + (\mu-1)\nabla \tilde{u}\big\|_{L^2(S )}\\
      &\leq N^{-1} C_\mu \| \tilde{u} - a \|_{L^2(S )}
      + \| (1-\mu)\nabla \tilde{u} \|_{L^2(S \setminus B_{N/2} )} \\
      &\le C_pC_\mu \|\nabla \tilde{u}\|_{L^2(\mathcal{B}_N\setminus\mathcal{B}_{N/2})}+  \|(1-\mu)\nabla \tilde{u} \|_{L^2(S \setminus \mathcal{B}_{N/2})}\\
      &\le C \|\nabla \tilde{u}\|_{L^2(S \setminus \mathcal{B}_{N/2})}.
    \end{aligned}
  \end{equation*}

For Part (c), we combine Part (a) and (b), that is
\begin{equation*}
\begin{aligned}
  \|\nabla \Pi_h u - \nabla \tilde{u}\|_{L^2(\Omega^\c)} &\le \|(I^2_h \circ \mathcal{L})( u) -\mathcal{L}(u) \|_{L^2(\Omega^\c)}+\| \mathcal{L}(u) -\tilde{u}\|_{L^2(\Omega^\c)} \\
  & \lesssim \|h^2\nabla^{3}\mathcal{L}(u) \|_{L^2(\Omega^\c)} +\|\nabla \tilde{u}\|_{L^2(\Omega^\c \setminus \mathcal{B}_{N/2})}\\
  & = \left\|h^2\sum_{n=0}^{3}\frac{1}{N^n}\nabla^{n}\mu \nabla^{3-n} (\tilde{u}-a) \right\|_{L^2(\mathcal{B}_N \setminus \Omega^\a)} +\|\nabla \tilde{u}\|_{L^2(\R ^2 \setminus \mathcal{B}_{N/2})}\\
  & \lesssim \|h^2 \nabla^{3} \tilde{u}\|_{L^2(\Omega^\c _h)} +\|\nabla \tilde{u}\|_{L^2(\R ^2 \setminus \mathcal{B}_{N/2})} + \frac{1}{N} \| h^2\nabla^2 \tilu \|_{L^2(\mathcal{B}_{N} \setminus B_{N/2})}.
\end{aligned}
\end{equation*}
The last line only contains the terms with $n = 0, 1$. The term for $n=2$ is
$N^{-2} \| h^2 \nabla \tilu \|_{L^2(\mathcal{B}_{N}\setminus B_{N/2})}$, but
since $N^{-2} h^2 \lesssim 1$ this is absorved into
$\| \nabla \tilu \|_{L^2(\R^2 \setminus \mathcal{B}_{N/2})}$.  For $n=3$, using
Poincar\'{e}'s inequality a similar argument applies.
\end{proof}

The estimate for $\del_2$ is now a consequence of the best approximation error
estimate:
\begin{align}
\notag
\del _2 & \le \|\partial_\mF W(\D \Pi_h u^\a) - \partial_\mathsf{F} W(\nabla \tilde{u}^\a)\|_{L^2(\Omega^\c)} \|\D \varphi_h \|_{L^2(\Omega^\c)} \\
\notag
&\le M_2 \|\nabla \Pi_h u^\a  - \nabla \tilde{u}^\a\|_{L^2(\Omega^\c)} \|\D \varphi_h\|_{L^2(\Omega^\c)}\\
\label{eq:delta_2}
& \lesssim \left(\|h^2 \nabla^{3} \tilde{u}^\a \|_{L^2(\Omega^\c _h)} +\|\nabla \tilde{u}^\a \|_{L^2\left(\R ^2 \setminus \mathcal{B}_{N/2}\right)}  + N^{-1} \| h^2\nabla^2 \tilu \|_{L^2(\mathcal{B}_{N} \setminus B_{N/2})}  \right) \|\D \varphi_h\|_{L^2(\Omega^\c)}.
\end{align}

\subsection{Estimation of $\delta_3$}
\label{sec:est-del3}
Recall that
\begin{displaymath}
  \delta_3 =  \int_{\Omega^\c} \big[\partial_{\sf F} W(\nabla \tilde{u}^\a)
  - \partial_{\sf F} W(\nabla {u}^\a)\big]: \nabla \varphi,
\end{displaymath}
where $\varphi$ is a lattice function with compact support and $\nabla \varphi$
denotes the gradient of its piecwise linear interpolant. To estimate this term
we observe that $u^\a$ can be interpreted as the P1 nodal interpolant of
$\tilu^\a$. Although this indicates a first-order estimate only, we can exploit
mesh regularity to obtain a second-order superconvergence estimate.

To that end, we rewrite the integral domain as a summation of elements. Let
$\mathring{E}$ be the union of edges that are shared by two continuum elements,
and $\omega_e$ be the union of said elements, i.e.,
\begin{equation*}
\begin{aligned}
\mathring{E} : &=\{ e = T_1 \cap T_2 \sep T_1, T_2 \in \T _ \Cs \}.\\
\omega_e : &= T_1 \cup T_2,\quad \text{where } T_1\cap T_2 = e.
\end{aligned}
\end{equation*}
Recall that $W(\mF) \equiv \frac{1}{\Omega_0} V(\mF \cdot \mathbf{a})$. Observe
that for a pair of $T_1,T_2$ sharing a common edge $e$ which has the direction
of $a_j$, $\D_{a_j} \varphi (T_1) = \D_{a_j} \varphi(T_2)$, which allows us to
re-group integration over elements as integration of patches $\omega_e$ {\em
  except} for elements near the interface. After simplifying the notation by
writing $\tilde{V}_j := \pp_j V(\D\tilu \cdot {\bfa })$ and
$V_j := \pp_j V(\D u \cdot {\bf a})$, we can rewrite $\delta_3$ as follows:

\begin{equation*}
\begin{aligned}
  \delta_3
  & = \frac{1}{\Omega_0}\sum_{T\in\T_\Cs \cup \T_\Is} \sum_{j=1}^{6}
  \int_{T \cap \Omega^\c} \big[ \tilde{V}_j - V_j \big] \cdot \D_{a_j} \varphi(T) \\
  & = \frac{1}{\Omega_0} \sum_{j = 1}^6
  \sum_{\substack{e \in \mathring{E}_j}}
  \int_{\omega_e} \big[ \tilde{V}_j - V_j \big] \cdot \D_{a_j} \varphi
  \\ &\qquad
  + \frac{1}{\Omega_0} \sum_{T\in \T_\Cs \cup \T_\Is} \sum_{j=1}^{6}
  c_{T,j} \int_{T \cap \Omega^\c} \big[ \tilde{V}_j - V_j \big] \cdot \D_{a_j} \varphi(T) \\
  &=: \tau_1 + \tau_2,
\end{aligned}
\end{equation*}
where $\mathring{E}_j: = \{e\in \mathring{E}\sep e\text{ is in the direction of } a_j\}$ and
$c_{T,j}$ is defined as follows,
\begin{equation*}
c_{T,j} = \cases{ 0, &  \exists e \in \mathring{E}_j \cap T ,\\
	1, & \text{otherwise}.
}
\end{equation*}
Observe that for $T \in \T_\Cs$, $c_{T,j}$ is only non-zero near the interface. So we have
\begin{equation}
  \label{eq:tau2-estimate}
\tau_2\le \frac{1}{\Omega_0} \int_{\Omega^i_+} M_2 |\D \tilde{u}^\a - \D u^\a | \, |\D \varphi| \lesssim \| \D ^2 \tilde{u}^\a \|_{L^2(\Omega^i_+)} \|\D \varphi\|_{L^2(\Omega^i_+)},
\end{equation}
where
$\Omega^i_+ : = \bigcup\{ T \in \T_\Cs \sep \text{dist} (T, \Omega^i) \le 1/2
\}$.
Note that the second-order error $\D ^2 \tilde{u}^\a$ results from the fact that
$u^\a$ is a piecewise linear nodal interpolant of $\tilde{u}^\a$ on a uniform
mesh.

To estimate $\tau_1$, we employ the following second-order mid-point estimate.

\begin{lemma}\label{lem:mprule}
  Suppose $f\in W^{2,\infty}(T_1 \cup T_2;\mathbb{R})$ where
  $T_1,T_2 \in \mathcal{T}$ such that they share an edge $e$ and let $m_e$
  be the mid-point of $e$, then
  \begin{equation*}
    \left|\int_{T_1 \cup T_2} f(\xi) - f(m_e) \, {\rm d\xi} \right| \lesssim \|\nabla^2 f\|_{L^\infty(T_1\cup T_2)}.
  \end{equation*}
\end{lemma}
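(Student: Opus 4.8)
The plan is to reduce the two-element estimate to a pair of one-element estimates, each of which is a standard second-order quadrature (midpoint rule) bound obtained via the Bramble--Hilbert lemma. Write $T_1 \cup T_2 = T_1 \sqcup T_2$ (up to the shared edge $e$, a null set), so that
\begin{equation*}
  \int_{T_1 \cup T_2} \big(f(\xi) - f(m_e)\big) \, {\rm d}\xi
  = \sum_{k=1}^{2} \int_{T_k} \big(f(\xi) - f(m_e)\big) \, {\rm d}\xi,
\end{equation*}
and it suffices to bound each term on the right by $\|\nabla^2 f\|_{L^\infty(T_1 \cup T_2)}$. Note that all elements $T \in \mathcal{T}$ are congruent unit triangles (the canonical triangulation of the triangular lattice $\Lambda$), so $|T_k| = \Omega_0 = \sqrt{3}/2$, $\mathrm{diam}(T_k) = 1$, and the midpoint $m_e$ lies on $\partial T_k$ at a fixed, $O(1)$ distance from every point of $T_k$; in particular the implied constants below are genuinely universal and I need not track shape-regularity.

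For a single element $T = T_k$, consider the linear functional $\ell_T(f) := \int_T \big(f(\xi) - f(m_e)\big)\,{\rm d}\xi$ acting on $f \in W^{2,\infty}(T)$. It is bounded on $W^{2,\infty}(T)$: $|\ell_T(f)| \le (|T| + |T|)\|f\|_{L^\infty(T)} \le C\|f\|_{W^{2,\infty}(T)}$, using that $m_e \in \overline T$ so $f(m_e)$ is controlled by $\|f\|_{L^\infty(T)}$ (after, if one prefers, passing to the continuous representative, which exists since $W^{2,\infty} \hookrightarrow C^1$ in 2D). Crucially, $\ell_T$ annihilates affine functions: if $f(\xi) = b + c \cdot \xi$ then $\frac{1}{|T|}\int_T f = f(\mathrm{bary}(T))$ is the value at the barycentre, and more to the point $\int_T (f(\xi) - f(m_e))\,{\rm d}\xi = |T|\,\big(f(\mathrm{bary}(T)) - f(m_e)\big)$; this need not vanish for a general affine $f$ unless $\mathrm{bary}(T) = m_e$, which is false. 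So the bare midpoint rule on $T$ is only first-order, and a naive Bramble--Hilbert argument gives only $\|\nabla f\|_{L^\infty}$. The fix — and the one genuinely non-routine point — is that the \emph{first-order} terms from $T_1$ and $T_2$ must cancel when summed, exactly as in the classical analysis of the midpoint (rectangle) rule versus the trapezoidal rule: the two triangles are reflections of one another across the common edge $e$, so $\mathrm{bary}(T_1)$ and $\mathrm{bary}(T_2)$ are symmetric about the midpoint $m_e$, and $|T_1| = |T_2|$; hence
\begin{equation*}
  \sum_{k=1}^2 |T_k|\,\big(f(\mathrm{bary}(T_k)) - f(m_e)\big)
  = |T_1| \sum_{k=1}^2 \nabla f(m_e) \cdot \big(\mathrm{bary}(T_k) - m_e\big) + O\big(\|\nabla^2 f\|_{L^\infty}\big),
\end{equation*}
and the bracketed vectors are negatives of each other, killing the linear term and leaving the desired $O(\|\nabla^2 f\|_{L^\infty(T_1 \cup T_2)})$ remainder. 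Concretely I would Taylor-expand $f$ about $m_e$ to first order with integral remainder on each $T_k$, integrate over $T_k$, add the two contributions, use the reflection symmetry $\mathrm{bary}(T_2) - m_e = -(\mathrm{bary}(T_1) - m_e)$ together with $|T_1| = |T_2|$ to cancel the gradient terms, and bound the remaining second-order integral remainders directly by $\|\nabla^2 f\|_{L^\infty(T_1 \cup T_2)}$ times the (fixed, $O(1)$) measure and diameter of $T_1 \cup T_2$.

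The main obstacle is therefore not any estimate in isolation but the bookkeeping that makes the symmetry visible: one must work with the patch $T_1 \cup T_2$ as a whole rather than element by element, and identify $m_e$ as the centre of symmetry of the patch. Once this is set up, the argument is a one-line Taylor expansion plus the observation that odd moments of a symmetric domain about its centre vanish. (Equivalently, one could invoke a Bramble--Hilbert/Deny--Lions argument on the patch $T_1 \cup T_2$ directly: the functional $f \mapsto \int_{T_1\cup T_2}(f - f(m_e))$ is bounded on $W^{2,\infty}$ and, by the symmetry just described, annihilates all affine functions, so it is controlled by the $W^{2,\infty}$-seminorm $\|\nabla^2 f\|_{L^\infty(T_1\cup T_2)}$; the constant is universal because all such patches in $\mathcal{T}$ are congruent.) I would present the Bramble--Hilbert-on-the-patch version, since it is shortest and the congruence of all patches in $\mathcal{T}$ makes the constant manifestly independent of $e$.
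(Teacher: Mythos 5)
Your proposal is correct, and in fact the paper gives no proof of this lemma at all (it is stated as a standard quadrature fact), so your argument supplies exactly the omitted content: the essential point is that for the canonical triangulation the patch $T_1\cup T_2$ is a unit rhombus whose centre of symmetry is $m_e$, so the first-order Taylor term integrates to zero and the integral remainder is bounded by $\|\nabla^2 f\|_{L^\infty(T_1\cup T_2)}$ times fixed geometric constants; your Bramble--Hilbert-on-the-patch variant is an equally valid packaging, with the constant uniform because all such patches are congruent. One small point of precision: reflection of $T_2$ onto $T_1$ across the line containing $e$ by itself only gives symmetry of the barycentres about that line; what kills the linear term is the \emph{point} symmetry of the patch about $m_e$ (equivalently, $\int_{T_1\cup T_2}(\xi-m_e)\,{\rm d}\xi=0$, i.e.\ $|T_1|=|T_2|$ and $\mathrm{bary}(T_2)-m_e=-(\mathrm{bary}(T_1)-m_e)$), which does hold here because the elements are congruent equilateral triangles (so the barycentres lie on the perpendicular to $e$ through $m_e$), but would not follow from the reflection argument for general shape-regular pairs. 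Also note that Taylor expansion about $m_e$ with integral remainder is legitimate for $f\in W^{2,\infty}\subset C^{1,1}$ and the segments from $m_e$ into each $T_k$ stay inside $T_k$ by convexity, so the remainder is controlled where you need it.
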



Then we can write
\begin{equation}
  \label{eq:tau1-step1}
  \tau_1
  = \frac{1}{\Omega_0} \sum_{j = 1}^6
  \sum_{\substack{e\in \mathring{E}_j}}
  \int_{\omega_e }\Big[ (\tilde{V}_j  - \tilde{V}_j(m_e))
  - ( V_j -\tilde{V}_j(m_e)) \Big] \cdot \D_{a_j} \varphi.
\end{equation}
By Lemma \ref{lem:mprule} we have
\begin{align}
\notag
  \left|\int_{\omega_e } \big(\tilde{V}_j  - \tilde{V}_j(m_e)\big) \right|
  &\lesssim  \|\D^2 \pp_j V(\D \tilde{u}^\a \cdot \mathbf{a}))\|_{L^\infty(\omega_e)}  \\
\notag
& \lesssim \left(M_3\|\D ^2 \tilde{u}^\a \|^2_{L^\infty(\omega_e)} + M_2\|\D ^3 \tilde{u}^\a \|_{L^\infty(\omega_e)} \right)\\
\label{eq: V_mid}
& \lesssim \|\D ^2 \tilde{u}^\a \|^2_{L^4(\omega_e)} + \|\D ^3 \tilde{u}^\a \|_{L^2(\omega_e)},
\end{align}
where the last line comes from the fact that $\tilde{u}^\a $ is a polynomial of
degree $5$ on each $T$, hence on each patch $\omega_e$ the norms are equivalent.

On the other hand,  for $i = 1,...6$ we denote $\nu_{T,i}$ and $\nu_{T,i'}$ as the vertices of $T$ with $\nu_{T,i} + a_i = \nu_{T,i'}$. Then on $T \supset e$, we have, using Taylor expansion,
\begin{align*}
  \D u^\a |_T \cdot a_i - \D \tilde{u}^\a (m_e)\cdot a_i
  &= \tilu ^\a (\nu_{T,i'}) - \tilu^\a  (\nu_{T,i}) -  \D \tilu^\a(m_e) \cdot a_i
      = \tau_e, \qquad
\end{align*}
$\text{where } |\tau_e| \lesssim \|\D ^3\tilu ^\a\|_{L^\infty(\omega_e)}$.
Then for $T_1$ and $T_2$ with $T_1 \cap T_2 = e = [\nu_{T,i}, \nu_{T,i'}]$, we
have
\begin{align*}
[\D u^\a (T_1) \cdot a_i - \D \tilde{u}^\a (m_e)\cdot a_i ] +[\D u^\a (T_2) \cdot a_i - \D \tilde{u}^\a (m_e)\cdot a_i ] = 2\tau_e.
\end{align*}
(See also Figure \ref{fig:T_1_T_2}.)
\begin{figure}
	\centering
	\includegraphics[width=0.4\linewidth]{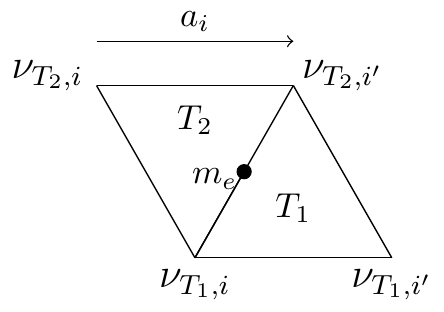}
	\caption{\small{  }}
	\label{fig:T_1_T_2}
\end{figure}
Hence, we can estimate
\begin{equation*}
\begin{aligned}
&\int_{\omega_e } {V}_j  - \tilde{V}_j(m_e)  \\
=&|T_1|(V_j|_{T_1} - \tilde{V}_j(m_e)) + |T_2|(V_j|_{T_2} - \tilde{V}_j(m_e)) \\[1mm]
=& |T_1|\Bigg\{ \sum_{i=1}^6 \pp_{j,i} V(\D \tilu ^a (m_e) \cdot \mathbf{a} ) \Big[
\D u^\a |_{T_1} \cdot a_i - \D \tilde{u}^\a (m_e)\cdot a_i \\[-3mm]
& \hspace{5.5cm} + \D u^\a |_{T_2} \cdot a_i - \D \tilde{u}^\a (m_e)\cdot a_i \Big]  \\
& \hspace{2cm}+ \mathcal{O}\big( M_3 \|\D^2 \tilde{u}^\a \|^2_{L^\infty(\omega_e)} \big)
   \Bigg\} \\
&\lesssim M_3 \|\D ^3\tilu ^\a\|_{L^\infty(\omega_e)}
  + M_2 \|\D ^2 \tilde{u}^\a \|^2_{L^\infty(\omega_2)}.
\end{aligned}
\end{equation*}

Combining this estimate with \eqref{eq: V_mid}, we have
\begin{align*}
\tau_1 \lesssim \Big\{\|\D ^2 \tilde{u}^\a \|^2_{L^4(\Omega^\c)} + \|\D ^3 \tilde{u}^\a \|_{L^2(\Omega^\c)}\Big\} \|\D \varphi \|_{L^2(\Omega^\c)}.
\end{align*}

Finally, combining the last estimate with \eqref{eq:tau2-estimate} we obtain
\begin{equation}
  \label{eq:delta_3-final}
  \delta_3 \lesssim \Big\{\| \D ^2 \tilde{u}^\a \|_{L^2(\Omega^i_+)} + \|\D ^2 \tilde{u}^\a \|^2_{L^4(\Omega^\c)} + \|\D ^3 \tilde{u}^\a \|_{L^2(\Omega^\c)}\Big\} \|\D \varphi \|_{L^2(\Omega^\c)}.
\end{equation}

\subsection{Estimation of $\delta_4$}
\label{sect:delta_4}
We observe that $\delta_4$ requires the estimation of pure modelling errors
regardless of the choice of finite element approximation or domain
truncation. This term was the main focus of \cite{PRE-ac.2dcorners}, where the
following result was proven.

\begin{theorem}[Theorem 5.1 \cite{PRE-ac.2dcorners}]
  Let $u : \L \to \R^m$ and let $\varphi : \L \to \R^m$ with compact support,
  then
  \begin{align*}
    & \hspace{-1cm}
      \< \delta \E^{\rm g23}(u^\a) -  \delta \E^\a (u^\a), \varphi \> \\
    &\lesssim
    \Big(
    M_2\|D^2 u^\a\|_{\ell^2(\Is ^{\rm ext})}
    + M_2 \|D^3 u^\a \|_{\ell^2 (\Cs )}
    + M_3 \|D^2 u^\a \|^2_{\ell^4(\Cs)} \Big) \|D \varphi\|_{\ell ^2(\Lambda  \setminus \As )},
  \end{align*}
  where
  $\Is^{\rm ext }: = \{\ell \in \Lambda \sep \text{\rm dist}(\ell, \Is) \le
  1\}$.
\end{theorem}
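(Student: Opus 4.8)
The plan is to separate $\< \delta\E^{\rm g23}(u^\a) - \delta\E^\a(u^\a),\varphi\>$ into the two genuinely distinct error contributions it hides: the \emph{interface reconstruction error}, from replacing $V(Du(\ell))$ by $V\big((\RO_\ell D_k u(\ell))_{k=1}^6\big)$ at the sites $\ell\in\Is$, and the \emph{Cauchy--Born modelling error}, from replacing the atomistic energy over $\Cs$ by $\int_{\Omc}W(\D u)$. Since the site contributions of $\ell\in\As$ are identical in $\E^{\rm g23}$ and $\E^\a$ they cancel, so that, writing $\delta_\varphi V_\ell(u):=\sum_{j}\pp_j V(Du(\ell))\cdot D_j\varphi(\ell)$ and $\delta_\varphi V^{\rm i}_\ell(u):=\sum_{j}\pp_j V(\RO_\ell Du(\ell))\cdot\RO_\ell D_j\varphi(\ell)$,
\begin{align*}
  \< \delta\E^{\rm g23}(u^\a) - \delta\E^\a(u^\a),\varphi\>
  &= \sum_{\ell\in\Is}\big[\delta_\varphi V^{\rm i}_\ell(u^\a) - \delta_\varphi V_\ell(u^\a)\big] \\
  &\qquad + \Big(\int_{\Omc}\pp_\mF W(\D u^\a):\D\varphi - \sum_{\ell\in\Cs}\delta_\varphi V_\ell(u^\a)\Big).
\end{align*}
The (one-lattice-spacing-thin) geometric mismatch between $\Omc$ and $\bigcup\mathcal T_\Cs$ produces only boundary contributions supported within distance $O(1)$ of $\Is$; I would absorb these into the interface term.

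For the continuum bracket I would invoke the classical Cauchy--Born consistency estimate for a point-symmetric nearest-neighbour site potential. Using $W(\mF)=\Omega_0^{-1}V(\mF\bfa)$, the smooth interpolant $\tilde u^\a$ of Lemma~\ref{lem:smooth_int}, and a second-order Taylor expansion of $\pp_\mF W$ about the local strain, one obtains
\begin{align*}
  \Big|\int_{\Omc}\pp_\mF W(\D u^\a):\D\varphi - \sum_{\ell\in\Cs}\delta_\varphi V_\ell(u^\a)\Big|
  &\lesssim \big(M_2\|D^3u^\a\|_{\ell^2(\Cs)} + M_3\|D^2u^\a\|^2_{\ell^4(\Cs)}\big) \\
  &\qquad \times \|D\varphi\|_{\ell^2(\L\setminus\As)},
\end{align*}
the $M_2$-term being the leading discrete-third-derivative defect and the $M_3$-term the quadratic Taylor remainder; this step is routine, and I would cite the corresponding lemmas in \cite{EhrOrtSha:2013, PRE-ac.2dcorners}.

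The interface bracket is the heart of the matter. The algebraic fact that makes it work is that, because $a_{j-1}+a_{j+1}=a_j$ in the triangular lattice,
\[
  \RO_\ell D_j w(\ell) - D_j w(\ell) = (1-\lambda_{\ell,j})\big(D_{j-1}w(\ell) + D_{j+1}w(\ell) - D_j w(\ell)\big)
\]
is a discrete \emph{second} difference of $w$, vanishing on affine $w$ --- this is the quantitative content of local energy consistency~\eqref{econs} --- so the reconstruction defect at $\ell$ is $\lesssim\|D^2 w\|$ on a fixed neighbourhood of $\ell$. I would then split $\delta_\varphi V^{\rm i}_\ell - \delta_\varphi V_\ell = A_\ell + B_\ell$ with
\begin{align*}
  A_\ell &:= \sum_j\big[\pp_j V(\RO_\ell Du^\a(\ell)) - \pp_j V(Du^\a(\ell))\big]\cdot\RO_\ell D_j\varphi(\ell), \\
  B_\ell &:= \sum_j\pp_j V(Du^\a(\ell))\cdot\big(\RO_\ell D_j\varphi(\ell) - D_j\varphi(\ell)\big).
\end{align*}
The term $\sum_{\ell\in\Is}A_\ell$ is harmless: by the Lipschitz bound~\eqref{eq:lip_V} on $\pp_j V$ (of size $M_2$) and $|\RO_\ell D_j\varphi(\ell)|\lesssim\|D\varphi\|$ near $\ell$, summation over $\Is$ and Cauchy--Schwarz give $\lesssim M_2\|D^2 u^\a\|_{\ell^2(\Is^{\rm ext})}\|D\varphi\|_{\ell^2(\Is^{\rm ext})}$. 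The term $\sum_{\ell\in\Is}B_\ell$ is the obstruction, because the prefactor $\pp_j V(Du^\a(\ell))$ is \emph{not} small and $\RO_\ell D_j\varphi(\ell) - D_j\varphi(\ell)$ carries a second difference of the test function, which $\|D\varphi\|$ does not control term by term. Here I would use force consistency~\eqref{fcons}: with $u^\a$ replaced by any homogeneous displacement $u_\mF$ the whole modelling functional vanishes, so $\sum_{\ell\in\Is}B_\ell$ evaluated at $u_\mF$ is cancelled exactly by the boundary terms of the Cauchy--Born integral on $\partial\Omc$ (this is the no-ghost-force property). Subtracting this identity with a locally chosen reference strain $\mF$ --- i.e. replacing $\pp_j V(Du^\a(\ell))$ by $\pp_j V(Du^\a(\ell)) - \pp_j V(\mF\bfa)$, an increment of size $\lesssim M_2\|D^2 u^\a\|$ near $\ell$ by~\eqref{eq:lip_V} --- and re-summing the resulting bond contributions along the interface, which by~\textbf{(A0)} is a finite union of simple lattice curves (so that the rearrangement stays local), again produces a bound of the form $M_2\|D^2 u^\a\|_{\ell^2(\Is^{\rm ext})}\|D\varphi\|_{\ell^2(\L\setminus\As)}$.

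The step I expect to be the main obstacle is exactly this last rearrangement of $\sum_{\ell\in\Is}B_\ell$ near interface corners: the reconstruction coefficients $\lambda_{\ell,j}$ change across the admissible configurations of Figure~\ref{fig:interface_corners}, and verifying that, after subtracting the homogeneous value, the remaining bond terms can be paired with nearby increments $D_b\varphi$ without global accumulation --- and that none of them reaches back into the fully atomistic region --- is the careful geometric case analysis carried out in \cite{PRE-ac.2dcorners}. By comparison, the continuum estimate and the reduction of the interface defect to $O(\|D^2 u^\a\|)$ are essentially mechanical.
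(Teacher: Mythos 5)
You should start by noting that the paper does not prove this statement at all: it is quoted verbatim as Theorem~5.1 of \cite{PRE-ac.2dcorners}, and the only work done in \S\ref{sect:delta_4} is the subsequent conversion of the discrete norms into norms of the smooth interpolant via Lemma~\ref{lem:smooth_int}. So the comparison is really with the proof in that reference. Your overall strategy is a plausible reconstruction of it in spirit: the cancellation of the $\As$-contributions, the identification of the reconstruction defect $\RO_\ell D_j w(\ell)-D_jw(\ell)=(1-\lambda_{\ell,j})\big(D_{j-1}w(\ell)+D_{j+1}w(\ell)-D_jw(\ell)\big)$ as a genuine second difference (using $a_{j-1}+a_{j+1}=a_j$), the second-order Cauchy--Born consistency in the bulk producing the $M_2\|D^3u^\a\|_{\ell^2(\Cs)}+M_3\|D^2u^\a\|^2_{\ell^4(\Cs)}$ terms, and the recognition that the dangerous contribution is the pairing of the $O(1)$ quantities $\pp_jV(Du^\a(\ell))$ with second differences of $\varphi$, which only the patch tests \eqref{econs}--\eqref{fcons} can neutralise. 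All of that is correct and is indeed how the modelling error is organised; your bound for the $A_\ell$-sum via \eqref{eq:lip_V} is fine.

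The gap is that the decisive step is asserted rather than proved, and you defer it to the very paper whose theorem you are reconstructing. Force consistency \eqref{fcons} is a single global identity for each \emph{fixed} homogeneous strain $\mF$; to exploit it with a locally varying reference strain you must show that, after subtracting $\pp_jV(\mF_\ell\bfa)$, the remaining interface bond terms --- together with the boundary contributions arising from the mismatch between the Voronoi-based $\Omega^{\rm c}$ and the union of continuum elements, which are themselves $O(1)$ and which you propose to ``absorb'' into the interface term --- can be re-summed into increments of $\varphi$ with locally bounded overlap, without reaching into $\As$, and uniformly across the corner configurations admitted by (A0) where the coefficients $\lambda_{\ell,j}$ change. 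That localisation is precisely the content of the cited proof, which does not perform an ad hoc rearrangement but rewrites both $\delta\E^{\rm g23}$ and $\delta\E^\a$ in a divergence (stress) form and compares the resulting stress tensors, the patch tests guaranteeing that the leading interface stresses cancel so that only second-difference remainders survive. Without carrying out that step (or an equivalent stress representation), your argument establishes the routine pieces ($A_\ell$ and the bulk Cauchy--Born error) but not the $M_2\|D^2u^\a\|_{\ell^2(\Is^{\rm ext})}\|D\varphi\|_{\ell^2(\L\setminus\As)}$ bound for $\sum_{\ell\in\Is}B_\ell$, which is the heart of the theorem; as written, the proposal is a correct road map with the crucial kilometre missing.
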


By the construction of the smooth interpolant $\tilu$ in Lemma
\ref{lem:smooth_int} we therefore conclude that
\begin{equation}
  \label{eq:del4_final}
  \delta_4 \lesssim (\|\D ^2 \tilde{u}^\a \|_{L^2(\Omega ^\i)}+\|\D ^3 \tilde{u}^\a \|_{L^2(\Omega^\c)} + \|\D ^2 \tilde{u}^\a\|^2_{L^4(\Omega^\c)}) \|\D \varphi\|_{L^2(\R ^2\setminus \Omega^\a )}.
\end{equation}

\subsection{Proof of Theorem \ref{thm:consist}}
Recall from \eqref{eq:eta-int+eta-ext} the splitting of the consistency error
into $\eta_{\rm ext}$ and $\eta_{\rm int}$. From the definition of $\varphi$ in
\eqref{def: varphi_intp} it follows that $\eta_{\rm ext} = 0$.

In \eqref{eq:decomp} the term $\eta_{\rm int}$ is further split into
$\delta_1, \dots, \delta_4$ which are respectively estimated in
\eqref{eq:final-estimate-delta1}, \eqref{eq:delta_2}, \eqref{eq:delta_3-final}
and \eqref{eq:del4_final}. Combining these four estimates, the stated result
\eqref{eq:consist} follows.

\subsection{Proof of the work estimate  \eqref{eq:consis_N} } \label{sec:optimal_mesh}
A key aspect of our analysis is the optimisation of the approximation
parameters: First, we determine a mesh size $h$ so that the finite element error
is balanced with the modelling error. Secondly, the domain radius $N$ and the
atomistic radius $K$ will be balanced. Finally, in order to compare the
efficiency against different methods, we will express the convergence rate of
the total error in terms of numbers of degree of freedom only.

We first estimate the decay rate of each term in the consistency estimate
\eqref{eq:consist}.  Recall that Corollary \ref{thm:decay} implies
$|\D^j \tilde{u}^\a (x)|\lesssim |x|^{-1-j}$. Hence, we can estimate the
interface error by
\begin{equation}\label{eq:op_rate}
\|\D^2 \tilde{u}^\a\|_{L^2(\Omega^{\i})} \lesssim\left( \int_{\Omega^\i}|x|^{-6}\right)^{\frac{1}{2}}\lesssim (K \cdot K^{-6})^{\frac{1}{2}} \lesssim K ^{ - 5/2}.
\end{equation}
Similarly, we have
\begin{align}
\notag
\|\D ^3 \tilde{u}^\a \|_{L^2(\Omega^\c)} &\lesssim  \left(\int_{\Omega^\c} |x|^{-8}\dx \right)^{\frac{1}{2}} \lesssim \left( \int_K^\infty r \cdot r^{-8}\dr\right)^{\frac{1}{2}} \lesssim K^{-3},\\
\notag
\|\D ^2 \tilde{u}^\a \|^2_{L^4(\Omega^\c)} & \lesssim \left(\int_{\Omega^\c} |x|^{-12}\dx \right)^{\frac{1}{2}} \lesssim \left( \int_K^\infty r \cdot r^{-12}\dr\right)^{\frac{1}{2}} \lesssim K^{-5}, \\
\label{eq:far-field}
\|\D \tilde{u}^\a \|_{L^2(\R \setminus \mathcal{B}_{N/2})} &\lesssim \left(\int_{\R ^2 \setminus \mathcal{B}_{N/2} } |x|^{-4}\dx \right)^{\frac{1}{2}} \lesssim \left( \int_{N/2}^\infty r \cdot r^{-4}\dr\right)^{\frac{1}{2}} \lesssim N^{-1}.
\end{align}
We observe that the interface term \eqref{eq:op_rate} dominates the consistency
error. Balancing this with the far-field term \eqref{eq:far-field} gives
\begin{equation*}
N^{-1} \approx  K ^{ - 5/2}, \quad \text{i.e.,} \quad N \approx K^{5/2}.
\end{equation*}
To determine the mesh size $h$, we write $h(x) := \left( \frac{|x|}{K}\right)^\beta$. Then we have
\begin{equation*}
\begin{aligned}
\|h^2 \D ^3 \tilde{u}^\a \|_{L^2(\Omega_h^\c)}& \lesssim \left(\int_{\Omega_h^\c}\frac{|x|^{4\beta}}{K^{4\beta}}|x|^{-8} \dx \right)^{1/2}\\
& = \frac{1}{K^{2\beta}}\left(\int_K^N r \cdot r^{4\beta-8} \dr \right) ^{1/2}\\
& = \frac{1}{K^{2\beta}} \left( \left[ r^{4\beta-6}\right]_{r=K}^{r = N} \right)^{1/2}\\
& \approx K^{-3}, \quad \text{provided that }4\beta-6 <0.
\end{aligned}
\end{equation*}

The final remaining term is
\begin{align*}
  N^{-1} \| h^2\nabla^2 \tilu ^\a \|_{L^2(\mathcal{B}_{N} \setminus B_{N/2})}
  &\lesssim N^{-1} \left(\int_{\mathcal{B}_{N} \setminus  B_{N/2}}
    \frac{|x|^{4\beta}}{K^{4\beta}}|x|^{-6} \dx \right)^{1/2}\\
 &\lesssim N^{2\beta - 3} K^{-2\beta}
   \lesssim K^{3\beta - \frac{15}{2}}.
\end{align*}
Since we chose $\beta < 3/2$, it follows that $K^{-3}$ dominates
$K^{3\beta - \frac{15}{2}}$.

Therefore, the optimal rate for the finite element coarsening is $K^{-3}$ and to
attain it we must choose
\begin{equation*}
  h(x) \approx \left( \frac{|x|}{K}\right)^\beta, \quad \text{where } \beta < \frac{3}{2}.
\end{equation*}

Finally, we estimate the relationship between the number of degrees of freedom
$\mathcal{N}_h$ and the atomistic radius $K$. It is easy to see that the number
of degrees of freedom in the atomistic domain satisfies
$\mathcal{N}_\a \approx K^2$.  Next, one can estimate the degrees of freedom in
the continuum domain $\mathcal{N}_\c$ by considering each hexagonal layer of the
mesh. On each layer with radius $r$,
$\mathcal{N}_{\rm layer} \approx \frac{r}{h(r)}$. Summing over all layers in the
continuum region gives
\begin{equation*}
\begin{aligned}
  \mathcal{N}_\c &\approx \sum_{\text{\rm layers in } \Omega_\c }\left(h \frac{1}{h}\right) \frac{r}{h} \\
  & \approx \int_K^N \frac{r}{h(r)^2} \dr \\
  & \approx \int_K^N r^{1-2\beta} K^{2\beta}\dr \\
  & \approx (-N^{2-2\beta}+K^{2-2\beta}) K^{2\beta}, \qquad \text{provided that } 2-2\beta < 0,\\
  & \approx K^2.
\end{aligned}
\end{equation*}
Therefore, we deduce that the mesh grading should satisfy
$1 < \beta < \frac{3}{2}$ to obtain the optimal cost/accuracy ratio for the
error in the energy-norm, $K^{-5/2} \approx \mathcal{N}_h^{-5/4}$. The table in
\S~\ref{sec:optim-params} summarises the derivation of this section.


\section{Proof of Theorem \ref{theo:main}}\label{sect:main_proof}
\subsection{Existence and error in energy norm}
We refer to the inverse function theorem, Theorem \ref{theo:inverse}. Let
$\delta \mathcal{G}_h : =\del \E^{\rm g23} - \del f$ and
$\bar{u}_h := \Pi_h u^\a$. We have already shown in Theorem \ref{thm:consist}
and Lemma \ref{lem:stab} that
\begin{align*}
\|\mathcal{G}_h(\bar{u}_h)\|_{\mathcal{U}_h ^*} &\le \eta, \\
\langle \delta \mathcal{G}_h(\bar{u}_h)v_h,v_h\rangle &\ge \gamma \|\nabla v_h\|^2_{L^2} \quad
\text{ for all }v_h \in \mathcal{U}_h,
\end{align*}
with $\eta = \eta_{\rm int} + \eta_{\rm ext}$ and
\begin{equation*}
\begin{aligned}
\eta_{\rm int}   &\lesssim  \|\D ^2 \tilde{u}^{\a}\|_{L^2(\Omega^{\rm i})}+\|\D ^3 \tilde{u}^\a \|_{L^2(\Omega^{\rm c})}
+ \|\D ^2 \tilde{u}^\a \|^2_{L^4(\Omega^{\rm c})}\\
& \qquad + \|h^2\D ^3 \tilde{u}^\a \|_{L^2(\Omega_h^{\rm c})} + \|\D  \tilde{u}^\a \|_{L^2(\R ^2 \setminus \mathcal{B}_{N/2})} + N^{-1} \| h^2\nabla^2 \tilu ^\a \|_{L^2(\mathcal{B}_{N} \setminus B_{N/2})} \\
&\lesssim \mathcal{N}_h^{-5/4}.
\end{aligned}
\end{equation*}

For $\eta_{\rm ext}$, recall that $\partial_{u(\ell)} f(u) = 0$ for all
	$|\ell| \geq R_f$, and that $K\ge R_f$. We have, on $\supp (\partial_{u(\ell)} f(u))$,  $\D \Pi_h u^\a = \D u^\a$ and $\D \varphi_h = \D \varphi$. Thus $\eta_{\rm ext} = 0$ and
	\begin{equation*}
	\eta  = \eta_{\rm int}\lesssim \mathcal{N}_h^{-5/4}.
	\end{equation*}

Using also the Lipscthiz bound from Lemma \ref{th:Lip} Theorem
\ref{theo:inverse} implies, for $K$ sufficiently large, that there exists a
strongly stable minimizer $u^{\rm g23}_h \in \mathcal{U}_h$ such that
 \begin{equation*}
\langle \del \E^{\rm g23}(u_h^{\rm g23} ),\varphi_h\rangle - \< \del f(u_h^{\rm g23}), \varphi_h\>=0, \quad \forall \varphi_h \in \mathcal{U}_h,
 \end{equation*}
 and
  \begin{equation*}
  \begin{aligned}
\|\D u^{\rm g23}_h - \D \Pi_h u^\a\|_{L^2}  &\le 2 \frac{\eta}{\gamma}\\
 & \lesssim \|\D ^2 \tilde{u}^{\a}\|_{L^2(\Omega^{\rm i})}+\|\D ^3 \tilde{u}^\a \|_{L^2(\Omega^{\rm c})}
 + \|\D ^2 \tilde{u}^\a \|^2_{L^4(\Omega^{\rm c})}\\
 &\quad  + \|h^2\D ^3 \tilde{u}^\a \|_{L^2(\Omega^{\rm c})} + \|\D  \tilde{u}^\a \|_{L^2(\R ^2 \setminus \mathcal{B}_{N/2})}\\
 &\lesssim  \mathcal{N}_h^{-5/4}.
\end{aligned}
\end{equation*}
Adding the best approximation error \eqref{eq:best_approx} gives
\begin{align*}
\|\D u^{\rm g23}_h - \D u^\a\|_{L^2} & \le \|\D u^{\rm g23}_h - \D \Pi_h u^\a\|_{L^2} + \|\D \Pi_h u^\a - \D u^\a \|_{L^2}\\
& \lesssim \mathcal{N}_h^{-5/4} + \|h^2
\nabla^{3} \tilde{u}\|_{L^2(\bigcup{\mathcal{T}_h^c})} +\|\nabla
\tilde{u}\|_{L^2\left(\R ^2 \setminus
	\mathcal{B}_{N/2}\right)}\\
& \lesssim \mathcal{N}_h^{-5/4}.
\end{align*}
This completes the proof of Theorem \ref{theo:main}.


\subsection{The energy error}\label{sec:energy_error}
In this section we prove the energy error estimates stated in Theorem
\ref{theo:main}. For the sake of notational simplicity we define
  $\E^\a_f := \E^\a - f$ and $\E^{\rm g23}_f := \E^{\rm g23} - f$.

First, we observe that
\begin{equation*}
\begin{aligned}
|\E^{\rm g23}_f(u^{\rm g23}_h) - \E^\a_f(u^\a)| & \le |\E^{\rm g23}_f(u_h^{\rm g23}) - \E^{\rm g23}_f(\Pi_h u^\a)| +  |\E^{\rm g23}_f(\Pi_h u^\a)- \E^\a_f(u^\a)| \\
& =: e_1 + e_2.
\end{aligned}
\end{equation*}
The first term can be estimated by \eqref{eq:u_error} and the fact that
$\<\del \E^{\rm g23}_f(u_h^{\rm g23}), \varphi_h\> = 0$ for all
$\varphi_h\in \mathcal{U}_h$:
\begin{align}
  \notag
  e_1 &\le \left| \<\del \E^{\rm g23}_f(u_h^{\rm g23}), \Pi_h u^\a - u_h^{\rm g23} \> \right| \\
\notag
  &\quad + \left|\int_0^1(1-t)\<\del^2 \E^{\rm g23}_f(u_h^{\rm g23} + t(\Pi_h u^\a - u_h^{\rm g23}))(\Pi_h u^\a - u_h^{\rm g23}), (\Pi_h u^\a - u_h^{\rm g23})\> \,dt\right|\\
\label{eq:est-e1}
& \lesssim \|\D \Pi_h u^\a - \D u_h^{\rm g23}\|^2_{L^2}
\lesssim K^{-5} \lesssim  \mathcal{N}_h^{-5/2}.
\end{align}
For the second term we use the fact that $\E^{\rm g23}(0) = \E^{\rm a}(0)$, and
hence $\E^{\rm g23}_f(0) = \E^{\rm a}_f(0)$, to estimate
\begin{align*}
  e_2
  &\le |\E^{\rm g23}_f(0)-\E^\a_f (0)|
    + \left| \int_0^1\< \del \E^{\rm g23}_f(t\Pi_h u^\a ), \Pi_h u^\a \> \, dt
    - \int_0^1\< \del \E^\a_f(t u^\a ), u^\a \> \, dt  \right|   \\
  &\le
    \left|\int_0^1 \< \del \E^{\rm g23}_f(t\Pi_h u^\a ), \Pi_h u^\a \>
    - \< \del \E^\a_f(t u^\a), v \> \, \dt \right|
    + \left| \int_0^1 \< \del \E^\a_f(t u^\a), v - u^\a \> \,\dt \right| \\
  &=: e_{21} + e_{22},
\end{align*}
where $v : \L \to \R^m$ is an arbitrary test function.

\subsubsection{Estimate for $e_{21}$}
To exploit the consistency error estimate we choose $v := \Pi_h^* \Pi_h u^\a$
defined in \eqref{def: varphi_intp}. In this case, we obtain
\begin{align*}
  e_{21}
  &\lesssim \int_0^1 \tilde{\eta}_{\rm int}(t) \, \dt \,
    \| \D \Pi_h u^\a \|_{L^2(\R^2 \setminus \Omega_h^\a)}, \qquad \text{where} \\
  \tilde{\eta}_{\rm int}(t)
  &=  \|\D ^2 t\tilde{u}^{\a}\|_{L^2(\Omega^{\rm i})} +
    \|\D ^3 t\tilde{u}^{\a} \|_{L^2(\Omega^{\rm c})}
    + \|\D ^2 t\tilde{u}^{\a} \|^2_{L^4(\Omega^{\rm c})}
    + \|h^2\D ^3 t\tilde{u}^{\a} \|_{L^2(\Omega_h^{\rm c})} \\
  & \qquad
    + \|\D  t\tilde{u}^{\a} \|_{L^2(\R ^2 \setminus \mathcal{B}_{N/2})}
    + N^{-1} \| h^2\nabla^2 t\tilu \|_{L^2(\mathcal{B}_{N} \setminus B_{N/2})} \\
  &\lesssim t K^{-5/2}.
\end{align*}
From Corollary \ref{thm:decay} and \ref{lem:ba} it follows that
$|\D \Pi_h v^\a(x)| \lesssim |x|^{-2}$ hence we can deduce that
\begin{equation}
  \label{eq:e21}
  e_{21} \lesssim K^{-5/2} K^{-1} = K^{-7/2} \lesssim \mathcal{N}_h^{-7/4}.
\end{equation}

\subsubsection{Estimate for $e_{22}$}
First we observe that by Trapezoidal rule, if $\zeta \in C^2(\R )$ and $ \zeta(0) = \zeta(1) = 0$, then we have， for some $\theta \in [0,1]$,
\begin{equation*}
  \int_0^1 \zeta(t) \dt =  -\frac{1}{12}\zeta''(\theta).
\end{equation*}
Let $\zeta (t) : = \< \del \E^\a_f(t u^\a), v - u^\a \>$.  Then $\zeta(1) = 0$
since $\del\E^\a_f(u^\a) = 0$ and $\zeta(0) = 0$ since $\del\E^\a(0) =0 $ and
$\pp_{u(\ell)} f(u) = 0$ outside defect core while $v = u^\a$ in the defect
core.

Having $e_{22} = \int_0^1 \zeta(t) \dt$ we obtain
\begin{align*}
e_{22} &\lesssim \delta^3 \E^\a(\theta u^\a)[u^\a, u^\a, v - u^\a]\\
& \lesssim M_3 \sum_{\ell \in \L \setminus \As  } |D u^\a (\ell) |^2 |D v(\ell ) - D u^\a(\ell) |\\
&\lesssim \int_{R^2 \setminus \Omega^\a}|\D \tilu ^\a |^2 |\D v - \D u^\a |,
\end{align*}
where we recall that $v := \Pi_h^* \Pi_h u^\a$. Using the stability
\eqref{eq:stab_Pi*} we obtain
\begin{align}
  \notag
  e_{22} &\lesssim \| \D \tilu^\a \|_{L^3(\R^2\setminus \Omega^\a)}^3
  + \| \D\tilu^\a \|_{L^4(\R^2 \setminus \Omega^\a)}^2 \| \D \Pi_h u^\a \|_{L^2(\R^2 \setminus \Omega^\a)}  \\
  \notag
  &\lesssim
    \int_{K}^\infty r r^{-6} \,\dr + \bigg( \int_K^\infty r r^{-8} \,\dr\bigg)^{1/2}
    \bigg( \int_K^\infty r r^{-4} \,\dr \bigg)^{1/2} \\
  \label{eq:est-e22}
  &\lesssim K^{-4} + K^{-3} K^{-1} = K^{-4}.
\end{align}

Combining \eqref{eq:est-e1}, \eqref{eq:e21} and \eqref{eq:est-e22} completes the
proof of the energy error estimate \eqref{eq:E_error} and therefore of our main
result, Theorem \ref{theo:main}.

\appendix

\bibliographystyle{plain}
\bibliography{qc.bib}
\end{document}